\documentclass[10pt,reqno]{amsart}

\usepackage[T1]{fontenc}
\usepackage[utf8]{inputenc}
\usepackage{lmodern}
\usepackage{amsmath,amssymb,mathtools,mathrsfs}
\usepackage[protrusion=true,expansion=false]{microtype}
\usepackage[colorlinks=true,linkcolor=blue,citecolor=blue,urlcolor=blue]{hyperref}
\hypersetup{
  pdftitle={Sharp Circular Sampling and Derivative Period Polynomials},
  pdfauthor={Seokho Jin},
  pdfsubject={Circular sampling, derivative period polynomials, simplicity, and interlacing}
}

\newcommand{\C}{\mathbb C}
\newcommand{\R}{\mathbb R}
\newcommand{\T}{\mathbb T}

\newcommand{\Zset}{\mathcal Z}

\newcommand{\J}{\mathcal J}
\newcommand{\ord}{\operatorname{ord}}
\newcommand{\re}{\operatorname{Re}}
\newcommand{\im}{\operatorname{Im}}

\newcommand{\eps}{\varepsilon}

\theoremstyle{plain}
\newtheorem{theorem}{Theorem}[section]
\newtheorem{proposition}[theorem]{Proposition}
\newtheorem{lemma}[theorem]{Lemma}
\newtheorem{corollary}[theorem]{Corollary}

\newtheorem{example}[theorem]{Example}
\theoremstyle{remark}
\newtheorem{remark}[theorem]{Remark}
\numberwithin{equation}{section}

\title[Sharp circular sampling and derivative period polynomials]
{Sharp Circular Sampling and Derivative Period Polynomials}
\author{Seokho Jin}
\address{Department of Mathematics, Chung-Ang University, 84 Heukseok-ro, Dongjak-gu, Seoul 06974, Republic of Korea}
  \email{archimed@cau.ac.kr}
\date{}
\subjclass[2020]{Primary 11F67, 30C15; Secondary 11F11, 11M26, 26C10, 30D10, 42A05}
\keywords{derivative period polynomial, circular sampling, simplicity, interlacing, quantitative localization, central finite difference, circular multiplier, Schur--Szeg\H{o} composition, unit-circle zeros}

\begin{document}
\begin{abstract}
We determine the exact maximal reflected zero region that forces centered binomial samples of a balanced entire function to have all zeros on the unit circle.  In degree $d\ge2$, this region is
\[
 \Omega_d=\left\{a+ib:\ a^2-\frac{b^2}{d-1}\le\frac d4\right\}.
\]
The finite theorem is sharp already for a single reflected zero pair, and a phase-preserving canonical-product approximation extends it to balanced entire functions of order at most one.  De Bruijn strip contraction and projective Hermite--Kakeya--Obreschkoff theory then give the exact common-zero obstruction, simplicity, strict interlacing of consecutive derivative samples, and a monotone real-pencil root flow.

As an application, we prove the derivative-period-polynomial unit-circle theorem for completed $L$-functions of primitive holomorphic newforms, in every derivative order and for arbitrary level and nebentypus.  After the standard normalization, every zero of
\[
 \sum_{j=0}^{k-2}\binom{k-2}{j}\Lambda^{(m)}(f,j+1)z^j
\]
lies on the unit circle for every weight $k\ge4$, level, nebentypus, and derivative order $m\ge0$.  In particular, this proves the full-polynomial unit-circle conjecture of Diamantis and Rolen in its original level-one setting and extends it to arbitrary level and nebentypus.  The same source-side theorem also gives simplicity, strict interlacing, and, for each fixed derivative order, conductor-uniform quantitative localization in the weight aspect.
\end{abstract}

\maketitle

\section{Introduction}
This paper is about a sharp strip/orbit-to-circle sampling principle.  For each
degree $d$ and spacing $\delta$, we determine exactly which reflected zero orbits
may be allowed if the centered binomial sample
\[
 \sum_{j=0}^d\binom dj F\!\left(\delta\left(j-\frac d2\right)\right)z^j
\]
is to have all of its zeros on the unit circle for every balanced source.  The
answer is a precise reflected zero region; it is not merely a sufficient strip but
the maximal reflection-invariant source geometry for uniform circularity.  Thus the
main theorem is a source-side zero-geometry theorem before it is an arithmetic
application.

The period-polynomial results are obtained only after this deterministic theorem is
applied to completed modular $L$-functions.  The functional equation supplies the
balancing symmetry, while the Euler product and the reflected zero-free half-plane
place the centered zeros in the required strip.  Thus the circularity of derivative
period-polynomial zeros is not proved by estimating critical values one by one; it
is transported from completed zero geometry through a sharp sampling theorem.  The
same mechanism gives simplicity, strict interlacing, a real-pencil root flow, and
quantitative localization.

We first state the sampling theorem.  For $d\ge2$ and $\delta>0$, set
\begin{equation}
 B_{d,\delta}[F](z)=\sum_{j=0}^d\binom dj
 F\!\left(\delta\left(j-\frac d2\right)\right)z^j,
 \qquad B_d=B_{d,1}.
 \label{eq:intro-B}
\end{equation}
Write
\[
 \T=\{z:|z|=1\},
 \qquad
 \mathcal C_d=\{P\in\C[z]:\deg P=d,\ \Zset(P)\subseteq\T\},
\]
and let
\[
 (\J F)(s)=\overline{F(-\bar s)}.
\]
We call $F$ balanced if $F=\omega\J F$ for some $|\omega|=1$.  The non-fixed zeros
of a balanced function occur in reflection pairs $\{\rho,-\bar\rho\}$.  In
degree $d$ the correct source region for these pairs is
\[
 \Omega_{d,\delta}
 =\left\{a+ib:
 a^2-\frac{b^2}{d-1}\le\frac{d\delta^2}{4}\right\}.
\]

\begin{theorem}[Sharp circular sampling]
\label{thm:intro-sampling}
Let $d\ge2$ and $\delta>0$.
\begin{enumerate}
\item If $p$ is a nonzero balanced polynomial and
\[
 \Zset(p)\subseteq\Omega_{d,\delta},
\]
then $B_{d,\delta}[p]\in\mathcal C_d$.  Moreover, $\Omega_{d,\delta}$ is maximal
among reflection-invariant sets for which this implication holds uniformly for all
balanced polynomials.

\item Let $F\not\equiv0$ be a balanced entire function of order at most one.  If
\[
 \Zset(F)\subseteq S_h:=\{s:|\re s|\le h\},
 \qquad
 h\le\frac{\delta\sqrt d}{2},
\]
then, for every $m$ such that $F^{(m)}\not\equiv0$,
\[
 B_{d,\delta}[F^{(m)}]\in\mathcal C_d.
\]
The strip constant $\delta\sqrt d/2$ cannot be enlarged uniformly over this class.
\end{enumerate}
\end{theorem}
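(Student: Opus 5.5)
The plan is to rewrite $B_{d,\delta}$ as a multiplier operator and reduce everything to a single reflected pair. With $\theta=z\,d/dz$, we have
\[
 B_{d,\delta}[F](z)=F\!\bigl(\delta(\theta-\tfrac d2)\bigr)(1+z)^d .
\]
Equivalently, the coefficient sequence of $B_{d,\delta}[pq]$, divided by $\binom dj$, is the Hadamard product of the corresponding normalized sequences for $p$ and $q$. So $B_{d,\delta}[pq]$ is, up to a nonzero constant, the Schur--Szeg\H{o} composition of $B_{d,\delta}[p]$ and $B_{d,\delta}[q]$. By Grace's theorem applied once with the closed unit disk and once with its exterior, the Schur--Szeg\H{o} composition of two elements of $\mathcal C_d$ again has all zeros in $\{|z|\le1\}\cap\{|z|\ge1\}=\T$, hence lies in $\mathcal C_d$. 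The full degree is kept because the result is self-inversive with $|c_0|=|c_d|\ne0$. Since a balanced polynomial factors, up to a unimodular constant, into fixed linear factors $s-ib$ ($b\in\R$) and reflected quadratic factors $(s-\rho)(s+\bar\rho)$, each factor balanced, part (1) reduces to two explicit computations.

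\textbf{Step 1 (linear and quadratic factors).} For the linear factor,
\[
 B_{d,\delta}[s-ib]=(1+z)^{d-1}\Bigl(\tfrac{\delta d}2 z-\tfrac{\delta d}2-ib\Bigr),
\]
whose extra root $(\tfrac{\delta d}{2}+ib)/(\tfrac{\delta d}{2}-ib)$ is unimodular. For the pair $\rho=a+ib$, I expand
\[
 B_{d,\delta}[(s-\rho)(s+\bar\rho)]=(1+z)^{d-2}Q_\rho(z),
\]
with $Q_\rho$ an explicit self-inversive quadratic. A self-inversive quadratic has both roots on $\T$ exactly when a discriminant-type inequality holds, namely $|c_1|\le 2|c_2|$ after the standard normalization. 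Working this out should give precisely $a^2-b^2/(d-1)\le d\delta^2/4$. This computation also proves maximality: if a reflection-invariant set $E$ contains a point $\rho\notin\Omega_{d,\delta}$, then $p=(s-\rho)(s+\bar\rho)$ is balanced with $\Zset(p)\subseteq E$, but $Q_\rho$ has a root off $\T$.

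\textbf{Step 2 (entire case).} Let $F$ be balanced of order at most one with zeros in $S_h$. First, de Bruijn's strip theorem (derivatives of order $\le1$ functions with zeros in a vertical strip keep zeros in that strip, in its real-part form) shows that $F^{(m)}$ again has zeros in $S_h$. Differentiation preserves balance up to a sign $(-1)^m$, so it suffices to treat $m=0$. Write the Hadamard product as
\[
 F(s)=e^{\alpha s+\beta}s^{k}\prod(1-s/\rho)e^{s/\rho}.
\]
I would group reflected pairs so that the convergence factors pair up, and use balance to force $\alpha$ to have the phase compatible with the factors. The phase-preserving step is then to approximate $F$ locally uniformly by products $p_n(s)e^{i\gamma_n s}$, with $p_n$ balanced, $\Zset(p_n)\subseteq S_h$, and $\gamma_n$ real. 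The exponential factor acts on $B_{d,\delta}$ as the rotation $z\mapsto e^{i\gamma\delta}z$, which preserves $\mathcal C_d$. Since $S_h\subseteq\Omega_{d,\delta}$ when $h^2\le d\delta^2/4$, part (1) applies to each $p_n$. Passing to the limit, Hurwitz gives zeros on $\T$ or at $0,\infty$; self-inversivity, $|c_0|=|c_d|$, and $F\not\equiv0$ then exclude degree drop, because all coefficients vanishing would force $F$ to vanish at $d+1$ points of an arithmetic progression in every translate. I would handle that last point by noting that a nonzero limit polynomial is self-inversive, hence of exact degree $d$. Sharpness of $\delta\sqrt d/2$ comes from $F(s)=s^2-h^2$ with $h>\delta\sqrt d/2$, which is Step 1 with $b=0$.

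\textbf{Main obstacle.} I expect the hardest step to be the approximation in Step 2. It has to keep every approximant exactly balanced, with a real linear exponent, while all zeros stay inside $S_h$. When $\sum1/\rho$ converges only conditionally, the factors $e^{s/\rho}$ have to be regrouped across reflected pairs so that $1/\rho+1/(-\bar\rho)$ is purely imaginary, and the residual exponent has to be absorbed into $\gamma_n$. I would first try this symmetric pairing directly and check that $\re\alpha=0$ is forced by $F=\omega\J F$.
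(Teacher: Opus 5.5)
Your part (1) and both sharpness claims follow the paper's argument: multiplicativity under $*_d$, the circle form of Grace--Szeg\H{o}, the splitting into fixed and reflected-pair factors, the self-inversive quadratic criterion, and $q_\rho$ and $s^2-h^2$ as counterexamples. There are two small corrections in that part.
\begin{itemize}
\item Your linear-factor display drops the $-ibz$ term. It should read $(1+z)^{d-1}\bigl[(\tfrac{\delta d}2-ib)z-(\tfrac{\delta d}2+ib)\bigr]$, whose root is the one you quote.
\item The computation you leave as ``should give'' is $4|A_\rho|^2-B_\rho^2=d\bigl((d-1)(d-4a^2)+4b^2\bigr)$, applied to $\rho/\delta$. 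The degenerate case $A_\rho=0$ (that is, $b=0$, $|a|=\delta d/2$) lies outside $\Omega_{d,\delta}$.
\end{itemize}

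In the entire case your route is genuinely shorter at one point. The paper's Theorem~\ref{thm:balanced-approx} also truncates the regrouped Hadamard product orbitwise to $CR_N(s)e^{it_Ns}$ with $t_N\in\R$. It then replaces $e^{it_Ns}$ by $(1+it_Ns/M_N)^{M_N}$, whose extra zeros lie on $i\R$, so the approximants are honest balanced polynomials. Derivatives are then handled by Gauss--Lucas on these polynomials plus Hurwitz (Corollary~\ref{cor:differentiation}). You instead keep the exponential and use $B_{d,\delta}[e^{i\gamma s}p](z)=e^{-i\gamma\delta d/2}B_{d,\delta}[p](e^{i\gamma\delta}z)$. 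The paper uses this identity only for exact-order sharpness, in~\eqref{eq:exponential-rotation}. Your version is valid, needs no convergence of $\gamma_n$, and removes the diagonal choice of $M_N$. The paper's extra step buys genuinely polynomial approximants, which it reuses for Gauss--Lucas here and again in Section~\ref{sec:global-interlacing}.

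Two steps need repair.

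\textbf{The derivative step needs balance.} Because your approximants are not polynomials, you import the derivative step from de Bruijn. As you state it, the claim is false without balance: $F(s)=se^{s}$ has its only zero in $S_0$, while $F'(s)=(1+s)e^{s}$ vanishes at $-1$. For balanced $F$ with at least one zero, it follows from the factorization you build anyway. There $\alpha=i\tau$ and the orbit sums of $1/\rho$ are purely imaginary, so $\re(F'/F)(s)=\sum_\rho(\re s-\re\rho)/|s-\rho|^2$. This sum converges absolutely and is positive for $\re s>h$ and negative for $\re s<-h$.

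\textbf{The degree of the limit.} Your argument does not work as written.
\begin{itemize}
\item For a nonzero limit, Hurwitz on disks off $\T$ already excludes a zero at $0$.
\item ``Self-inversive, hence of exact degree $d$'' is false in general: $P(z)=z$ satisfies $P(z)=z^2\overline{P(1/\bar z)}$.
\item $F$ vanishing at the $d+1$ sample points does not contradict $F\not\equiv0$.
\end{itemize}
What you need is $F^{(m)}(\pm\delta d/2)\neq0$. This is immediate: for $d\ge2$ one has $h\le\delta\sqrt d/2<\delta d/2$, so the endpoints lie outside $S_h\supseteq\Zset(F^{(m)})$. The leading coefficient of $B_{d,\delta}[F^{(m)}]$ is then nonzero, and Hurwitz finishes, exactly as in Theorem~\ref{thm:entire-orbit}.

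With these two repairs your argument is complete.
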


The theorem is exact in two complementary senses.  The finite polynomial statement is
governed by a single reflected pair: a zero on the imaginary axis gives a linear
unit-circle factor, while a non-fixed pair $\{\rho,-\bar\rho\}$ gives a
self-inversive quadratic, and a direct calculation shows that this quadratic has
both zeros on $\T$ exactly when $\rho\in\Omega_{d,\delta}$.  Schur--Szeg\H{o}
composition then multiplies the orbit factors without leaving the unit circle.  The
same two-point computation proves maximality, because any reflection-invariant
source set that reaches outside $\Omega_{d,\delta}$ already contains a quadratic
counterexample.  The passage from polynomials to entire functions is made by a
phase-preserving canonical-product approximation: reflected zero orbits are
truncated symmetrically, and the remaining exponential factor is approximated by
polynomials whose additional zeros lie on the imaginary axis.

The simplicity and interlacing assertions require a second, more rigid input.  For
$\zeta\in\T$ define
\[
 \mathscr T_{\zeta,\delta}g(s)
 =g\!\left(s-\frac\delta2\right)
  +\zeta g\!\left(s+\frac\delta2\right).
\]
The following theorem identifies the exact obstruction to a common zero of two
consecutive sampled derivatives.

\begin{theorem}[Strict circular sampling]
\label{thm:intro-strict}
Assume the hypotheses of Theorem~\ref{thm:intro-sampling}(2), with
\[
 h<\frac{\delta\sqrt d}{2}.
\]
Fix $m\ge0$ and suppose that
\[
 \mathscr T_{\zeta,\delta}^{\,d}F^{(m)}\not\equiv0
 \qquad\text{for every }\zeta\in\T.
\]
Then $B_{d,\delta}[F^{(m)}]$ and $B_{d,\delta}[F^{(m+1)}]$ have simple unit-circle
zeros and strictly cyclically interlace.  For every $t\in\R$, the zeros of
\[
 B_{d,\delta}[F^{(m)}]
 +itB_{d,\delta}[F^{(m+1)}]
\]
are simple and, after a fixed Cayley transformation, form monotone projective root
branches as $t$ varies; see Theorem~\ref{thm:strict-sampling}.
\end{theorem}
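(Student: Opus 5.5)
The plan is to reduce every assertion to a statement about one entire function of $s$, obtained from the Cayley parametrization of the unit circle. For $\zeta\in\T$ we have
\[
 B_{d,\delta}[G](\zeta)=\left(\mathscr T^{\,d}_{\zeta,\delta}G\right)(0),
\]
since $(\tau_{-\delta/2}+\zeta\tau_{\delta/2})^d=\sum_j\binom dj\zeta^j\tau_{\delta(j-d/2)}$. Write $G_m=F^{(m)}$ and, for fixed $\zeta$, set $H_\zeta=\mathscr T^{\,d}_{\zeta,\delta}G_m$. Translation commutes with differentiation, so $\mathscr T^{\,d}_{\zeta,\delta}G_{m+1}=H_\zeta'$. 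A zero $\zeta$ of $B[G_m]$ is therefore a zero of $H_\zeta$ at $s=0$, and a common zero of $B[G_m]$ and $B[G_{m+1}]$ is a multiple zero of $H_\zeta$ at $0$.

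The core step is to show that $H_\zeta$ has only simple zeros, all in a narrow strip. Since $F$ is balanced, $\overline{\zeta}^{d/2}H_\zeta$ is balanced up to a unimodular constant, i.e.\ it is essentially real on the imaginary axis after rotation. I will use de Bruijn's strip contraction. The operator $g\mapsto g(s-\delta/2)+\zeta g(s+\delta/2)$ has symbol $e^{-\delta\sigma/2}+\zeta e^{\delta\sigma/2}$, whose zeros in $\sigma$ are purely imaginary. By de Bruijn's theorem, it sends an order $\le1$ function with zeros in $S_h$ to one with zeros in $S_{h'}$, where $h'^2=\max(h^2-\delta^2/4,0)$ in the strict sense. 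Iterating $d$ times gives zeros of $H_\zeta$ in $S_{h_d}$ with $h_d^2=\max(h^2-d\delta^2/4,0)$. Differentiation does not enlarge the strip (Pólya, order $\le1$). Because $h<\delta\sqrt d/2$ strictly, I expect the conclusion that all zeros of $H_\zeta$ lie on the imaginary axis and are simple. The strict de Bruijn contraction produces simple zeros once the strip has collapsed, provided $H_\zeta\not\equiv0$, and that is exactly the standing hypothesis.

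This is the main obstacle. De Bruijn's original result requires a Laguerre–Pólya-type class. I would first establish it for balanced polynomials with zeros in $S_h$, using the finite computation behind Theorem~\ref{thm:intro-sampling}(1) with a strict inequality. I would then pass to order-one $F$ via the phase-preserving canonical-product approximation from the proof of Theorem~\ref{thm:intro-sampling}(2). Hurwitz's theorem keeps the zeros on the axis, but it may not preserve simplicity. To recover simplicity I would use the projective Hermite–Kakeya–Obreschkoff theorem applied to $H_\zeta$ and $H_\zeta'$: a real pencil $H_\zeta+itH_\zeta'$ of functions with zeros on $i\R$ has all zeros on the axis. That argument forces the zeros of $H_\zeta$ and $H_\zeta'$ to interlace, which in turn excludes multiple zeros unless $H_\zeta$ vanishes identically.

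With simplicity of $H_\zeta$ in hand, conclude as follows. Theorem~\ref{thm:intro-sampling}(2) already puts the zeros of $B[G_m]$ and $B[G_{m+1}]$ on $\T$. A common zero $\zeta_0$ would give a double zero of $H_{\zeta_0}$ at $0$, so there is none. Next, apply Theorem~\ref{thm:intro-sampling}(2) to the balanced order-one function $F^{(m)}+itF^{(m+1)}$, noting that $G_m+itG_m'$ still has zeros in $S_h$ by Hermite–Biehler/Obreschkoff for real $t$. This places every member of the pencil $B[G_m]+itB[G_{m+1}]$ in $\mathcal C_d$. Obreschkoff's theorem in its Cayley-transformed form, for self-inversive polynomials, then says that two polynomials in $\mathcal C_d$ whose real pencil stays in $\mathcal C_d$ have cyclically interlacing zeros. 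Since there are no common zeros, the interlacing is strict and all zeros are simple; a double zero would force a common zero with the derivative direction of the pencil. Finally, the roots of the pencil are simple for every $t$, so the implicit function theorem makes them analytic in $t$. Monotonicity after the Cayley map $z\mapsto i(1-z)/(1+z)$ follows from the sign of $d\theta/dt$, which is fixed by the interlacing as in the standard Hermite–Kakeya argument; this is the content of Theorem~\ref{thm:strict-sampling}.
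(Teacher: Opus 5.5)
Your overall architecture matches the paper's: the identities $B_{d,\delta}[E](\zeta)=\mathscr T^{\,d}_{\zeta,\delta}E(0)$ and $B_{d,\delta}[E'](\zeta)=(\mathscr T^{\,d}_{\zeta,\delta}E)'(0)$, coprimeness from simplicity of $H_\zeta$, strip preservation for $E+itE'$, the Cayley-transformed Obreschkoff lemma, and root flow. The gap is in the core claim that a nonzero $H_\zeta=\mathscr T^{\,d}_{\zeta,\delta}F^{(m)}$ has only simple zeros.

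Your route to simplicity is: approximate, pass to the limit by Hurwitz, then apply Hermite--Kakeya--Obreschkoff to $H_\zeta$ and $H_\zeta'$. This uses only that $H_\zeta\not\equiv0$ is balanced with zeros on $i\R$, and that cannot suffice. Weak interlacing of $H_\zeta$ and $H_\zeta'$, and hyperbolicity of $H_\zeta+itH_\zeta'$, are fully compatible with a common zero. A common zero of $H_\zeta$ and $H_\zeta'$ is precisely a multiple zero of $H_\zeta$.

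The paper's Example~\ref{ex:strict-endpoint-sharp} shows this concretely. For $d=2$, $F(s)=s^2-\delta^2/2$ and $\zeta=1$ one gets $\mathscr T^{\,2}_{1,\delta}F(s)=4s^2$. This function is nonzero and balanced, has its zeros on $i\R$, and satisfies the nondegeneracy hypothesis. Yet it has a double zero at $0$, which gives the common zero $z=1$ of $B_{2,\delta}[F]$ and $B_{2,\delta}[F']$. Here $h=\delta\sqrt d/2$, and your collapsed half-width $h_d$ is still $0$. So the only way you use $h<\delta\sqrt d/2$ (to get $h_d=0$) does not separate the theorem from its failing endpoint, and the argument as written would prove a false statement. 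The finite Grace--Szeg\H{o} computation behind Theorem~\ref{thm:intro-sampling}(1) does not help either: it controls zeros of the $z$-polynomial, not multiplicities of zeros of $H_\zeta$ in $s$.

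The missing idea is that the strictness must be spent on the \emph{last} difference step, not on the final strip.
\begin{itemize}
\item Normalize $E=F^{(m)}$ to be $\J$-invariant, and put $p(z)=E(iz)$, which is real entire, and $c=\delta/2$. Then $(\mathscr T^{\,r}_{\zeta,\delta}E)(iz)=e^{ir\vartheta/2}\Delta^{\,r}_{\vartheta,c}p(z)$ with $\zeta=e^{i\vartheta}$.
\item After only $d-1$ contractions, the zeros of $q:=\Delta^{\,d-1}_{\vartheta,c}p$ lie in $|\im z|\le b_{d-1}=\sqrt{\max(h^2-(d-1)c^2,0)}$. The hypothesis $h<\sqrt d\,c$ gives $b_{d-1}<c$.
\item Hence the line $\im z=c$ stays a positive distance from $\Zset(q)$. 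The genus-one canonical product of $q$ (real residual exponential, conjugate pairs grouped) gives $\frac{d}{dx}\arg q(x+ic)<0$ termwise; the zero-free case is trivial.
\item One more contraction puts every zero of $\Delta_{\vartheta,c}q$ on $\R$. Since $\Delta_{\vartheta,c}q(x)=2\re Z(x)$ with $Z(x)=e^{-i\vartheta/2}q(x+ic)=|Z|e^{i\phi}$, at each zero the derivative is $-|Z(x)|\,\phi'(x)\sin\phi(x)\ne0$. So every zero of a nonzero $H_\zeta$ is simple.
\end{itemize}
This is done directly on the order-at-most-one entire function, so no simplicity has to survive a Hurwitz limit; approximation is needed only for the location part of the contraction. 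At the endpoint $b_{d-1}=c$, the line $\im z=c$ can carry zeros of $q$ and the monotonicity fails, which is exactly the example above.

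A smaller omission: Obreschkoff needs every nonzero real pencil member $aE+ibE'$ to be a nonzero function. The nondegeneracy hypothesis is what excludes $E'\equiv0$ (take $\zeta=-1$) and $E=Ce^{i(a/b)s}$ (take $\zeta=-e^{-i(a/b)\delta}$).
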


The obstruction is visible from the identities
\[
 B_{d,\delta}[E](\zeta)
   =\mathscr T_{\zeta,\delta}^{\,d}E(0),
 \qquad
 B_{d,\delta}[E'](\zeta)
   =\bigl(\mathscr T_{\zeta,\delta}^{\,d}E\bigr)'(0).
\]
Repeated de Bruijn strip contraction makes the zero strip thinner than the final
central-difference step, and the last nonzero central difference has only simple real
zeros.  Hence a common zero at $\zeta$ is equivalent to the identity
$\mathscr T_{\zeta,\delta}^{\,d}E\equiv0$.  Projective
Hermite--Kakeya--Obreschkoff theory then turns coprimeness into strict cyclic
interlacing and simplicity for the whole real pencil.

We now pass from the source-side theorem to arithmetic.  Let
\[
 f\in S_k^{\mathrm{new}}(\Gamma_0(N),\chi),\qquad k\ge4,
\]
be a normalized primitive holomorphic newform, and put
\[
 \Lambda(f,s)=\left(\frac{\sqrt N}{2\pi}\right)^s\Gamma(s)L(f,s).
\]
For $m\ge0$ define
\begin{equation}
 U_{f,m}(z)=\sum_{j=0}^{k-2}\binom{k-2}{j}\Lambda^{(m)}(f,j+1)z^j.
 \label{eq:intro-U}
\end{equation}
This is the unit-circle normalization of the full derivative period polynomial.

\begin{theorem}[Derivative period polynomials]
\label{thm:intro-newforms}
For every $m\ge0$, the polynomial $U_{f,m}$ has exactly $k-2$ zeros, all simple and
all on the unit circle.  The zeros of $U_{f,m}$ and $U_{f,m+1}$ strictly cyclically
interlace.  More generally, for every $t\in\R$, the polynomial
\[
 U_{f,m}+itU_{f,m+1}
\]
has only simple unit-circle zeros and, after a fixed Cayley transformation, its zeros
form monotone projective root branches as $t$ varies; see
Theorem~\ref{thm:strict-sampling}.
\end{theorem}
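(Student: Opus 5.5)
We apply Theorems~\ref{thm:intro-sampling}(2) and~\ref{thm:intro-strict} with $d=k-2$ and $\delta=1$ to a centered completed $L$-function. Put
\[
 F(s)=\Lambda\!\left(f,s+\tfrac k2\right),
\]
so that $F(j-\tfrac d2)=\Lambda(f,j+1)$ for $0\le j\le d$. Differentiating in $s$ commutes with this shift, hence
\[
 B_{d,1}[F^{(m)}]=U_{f,m}.
\]
Thus every assertion of Theorem~\ref{thm:intro-newforms} follows once $F$ satisfies the hypotheses of those two theorems.

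\emph{Balancing.} The functional equation $\Lambda(f,s)=\varepsilon\,\Lambda(\bar f,k-s)$ with $|\varepsilon|=1$ and $\bar f$ the dual newform, combined with $\overline{\Lambda(f,\bar s)}=\Lambda(\bar f,s)$, gives
\[
 F(s)=\varepsilon\,\overline{F(-\bar s)}.
\]
So $F=\omega\J F$ with $\omega=\varepsilon$.

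\emph{Order at most one.} $\Lambda(f,s)$ is entire for a primitive newform of weight $k\ge2$, being a Mellin transform of a cusp form. It has order one by Stirling's formula and the Phragm\'en--Lindel\"of principle.

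\emph{Zero strip.} The Euler product converges absolutely for $\re s>\tfrac{k+1}{2}$, using Deligne's bound. Nonvanishing extends to the closed line $\re s=\tfrac{k+1}{2}$ by the standard Jacquet--Shalika/de la Vall\'ee Poussin argument, and $\Gamma$ has no zeros. So $\Lambda(f,s)\ne0$ for $\re s\ge\tfrac{k+1}{2}$, and by the functional equation also for $\re s\le\tfrac{k-1}{2}$. Hence
\[
 \Zset(F)\subseteq\{|\re s|<\tfrac12\}.
\]
Compactness of zeros in vertical bands of bounded height, together with the zero-free line at $\re s=\tfrac12$, gives a strip $S_h$ with $h<\tfrac12$ (one could even take $h\to\tfrac12$ limits). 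The sampling threshold is
\[
 \frac{\delta\sqrt d}{2}=\frac{\sqrt{k-2}}{2}\ge\frac{\sqrt2}{2}>\frac12
 \qquad\text{for }k\ge4,
\]
so the strict inequality $h<\sqrt d/2$ holds with room to spare. Therefore Theorem~\ref{thm:intro-sampling}(2) already gives $U_{f,m}\in\mathcal C_{k-2}$. This also requires $F^{(m)}\not\equiv0$, which holds because $F$ is a nonconstant entire function of order one that is not a polynomial. Indeed, it has infinitely many zeros by Hadamard and the zero-counting formula, and $\Lambda(f,\sigma)\to$ decays along the real axis via the Gamma factor.

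\emph{Nondegeneracy (main obstacle).} The remaining work is the hypothesis $\mathscr T_{\zeta,1}^{d}F^{(m)}\not\equiv0$ for every $\zeta\in\T$. Expanding,
\[
 \mathscr T_{\zeta,1}^{d}G(s)=\sum_j\binom dj\zeta^jG\!\left(s+j-\tfrac d2\right).
\]
Take the Dirichlet-series/Mellin picture with $G=F^{(m)}$ and $s$ moved to $\re s$ large, where $\Lambda$ is a Gamma factor times an absolutely convergent Dirichlet series. As $\sigma\to+\infty$ along the reals, the term with $j=d$ dominates all others. The reason is that
\[
 \Gamma\!\left(\sigma+\tfrac d2+\tfrac k2\right)\Big/\Gamma\!\left(\sigma-\tfrac d2+\tfrac k2+j\right)\to\infty
 \quad\text{for } j<d,
\]
while $\sum a_n n^{-s}\to a_1=1$, and the factors $(\log)$ powers from the $m$ derivatives and the factor $(\sqrt N/2\pi)^{s}$ are comparable across terms. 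Hence the sum is asymptotic to $\zeta^d$ times a nonzero function and cannot vanish identically. I expect the care here to be in controlling the derivative terms: $F^{(m)}$ is a sum of products of $\Gamma$-derivatives, $\log$ factors, and $L$-derivatives. I would handle this by writing $F^{(m)}(s)=\Gamma(s+\tfrac k2)X^{s}\bigl((\log s)^m(1+o(1))\bigr)$ as $s\to+\infty$ with $X=\sqrt N/2\pi$, which follows from $\psi(s)\sim\log s$ and $L^{(r)}(f,s)\to0$ for $r\ge1$.

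With nondegeneracy established, Theorem~\ref{thm:intro-strict} yields simplicity, strict cyclic interlacing of $U_{f,m}$ and $U_{f,m+1}$, and the monotone pencil statement. This applies for every $m$, level, and nebentypus.
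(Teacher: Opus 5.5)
Your proposal is correct and follows essentially the paper's proof. The shared steps are the centering $F(s)=\Lambda(f,k/2+s)$ with $U_{f,m}=B_{k-2}[F^{(m)}]$, balance from the functional equation, the strip $|\re s|\le 1/2$ from the Euler product and Deligne's bound, the comparison $1/2<\sqrt{k-2}/2$, a right-edge gamma-growth argument in which the top shift $j=d$ dominates $\mathscr T_{\zeta,1}^{\,d}F^{(m)}$, and finally the strict sampling theorem.

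One step is wrong and should be deleted; it carries no weight in your argument. Compactness of zeros in bounded boxes, together with a zero-free edge line, does not give a strip $S_h$ with some $h<1/2$: zeros may approach the edge as the height grows. A uniform $h<1/2$ would in fact be an open quasi-Riemann hypothesis for $L(f,s)$. You do not need it, because $h=1/2$ already satisfies the strict inequality $h<\sqrt{k-2}/2$, which is exactly what the paper uses. The nonvanishing on the edge line is likewise unnecessary.

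There is also a minor slip: $\Lambda(f,\sigma)$ grows, rather than decays, as $\sigma\to+\infty$. The fact $F^{(m)}\not\equiv0$ follows directly from your own right-edge asymptotic applied with zero iterations, as the paper does.
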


The bridge to Theorems~\ref{thm:intro-sampling} and~\ref{thm:intro-strict} is exact.
With
\[
 d=k-2,
 \qquad
 F_f(s)=\Lambda\!\left(f,\frac k2+s\right),
\]
one has
\begin{equation}
 U_{f,m}(z)=B_d[F_f^{(m)}](z).
 \label{eq:intro-bridge}
\end{equation}
The functional equation gives balancedness, and the Euler product together with the
reflected functional equation implies that all zeros of $F_f$ lie in $|\re s|\le1/2$.
Since $1/2<\sqrt{k-2}/2$ for $k\ge4$, Theorem~\ref{thm:intro-sampling} gives the
circular location.  A gamma-factor asymptotic at the right edge rules out the
strictness obstruction for every $m$ and every $\zeta\in\T$, completing the proof
of Theorem~\ref{thm:intro-newforms}.

The classical period-polynomial variable is recovered by the reciprocal rotation and
conductor rescaling
\[
 Q_{f,m}^{\mathrm{norm}}(z)=iz^{k-2}U_{f,m}\!\left(\frac1{iz}\right),
 \qquad
 R_{f,m}^{(N)}(X)=(-1)^{k-2}N^{-(k-1)/2}
 Q_{f,m}^{\mathrm{norm}}(\sqrt N X).
\]
Corollary~\ref{cor:conductor-rescaled} shows that $R_{f,m}^{(N)}$ has simple zeros
on $|X|=N^{-1/2}$ and inherits the same interlacing and root-flow conclusions.  For
$m=0$ this is the classical period polynomial
\[
 r_f(X)=\int_0^{i\infty}f(\tau)(\tau-X)^{k-2}\,d\tau.
\]
Thus Theorem~\ref{thm:intro-newforms} proves the full-polynomial unit-circle
problem posed by Diamantis and Rolen \cite{DiamantisRolen} in every derivative
order, extends it to arbitrary level and nebentypus, and at the same time
strengthens the conclusion to simplicity, strict interlacing, and a pencil flow; see
also \cite{DiamantisRolenSurvey} for background on the derivative
period-polynomial problem and its cohomological context.

The final main result makes the circular picture quantitative in the weight aspect,
uniformly in the conductor, nebentypus, and newform.  Let $\eps_f=e^{i\omega_f}$,
\[
 d=k-2,
 \qquad D=\frac d2,
 \qquad a_N=\frac{2\pi}{\sqrt N},
 \qquad
 \Gamma_{f,\mu}=\frac{\omega_f+\mu\pi}{2},
\]
and define
\begin{equation}
 \Phi_{N,f,\mu}(\theta)
 :=a_N\sin\theta-D\theta-\Gamma_{f,\mu}.
 \label{eq:intro-general-phase}
\end{equation}
Only $\Gamma_{f,\mu}$ modulo $\pi$ is relevant.  When $D>a_N$, let
$\mathcal A_{N,f,\mu}\subset\R/2\pi\mathbb Z$ be the $d$-point set defined by
\begin{equation}
 \cos\Phi_{N,f,\mu}(\theta)=0.
 \label{eq:intro-general-model-set}
\end{equation}

\begin{theorem}[Conductor-uniform quantitative localization]
\label{thm:intro-quantitative}
Fix $\mu\ge0$.  There exists $k_0(\mu)$ such that, for every normalized primitive
newform $f\in S_k^{\mathrm{new}}(\Gamma_0(N),\chi)$ with $k\ge k_0(\mu)$,
the roots $e^{i\vartheta_j}$ of $Q_{f,\mu}^{\mathrm{norm}}$ and the model angles
$\alpha_j\in\mathcal A_{N,f,\mu}$ can be cyclically labeled so that
\begin{equation}
 \operatorname{dist}_{\R/2\pi\mathbb Z}
 \bigl(\vartheta_j+\tfrac\pi2,\alpha_j\bigr)
 \ll_\mu \frac{a_N}{k^2}+\frac{2^{-k/3}}{k}.
 \label{eq:intro-general-localization}
\end{equation}
The implied constant is independent of $N$, $\chi$, and $f$.  Consequently, the
angular gaps for one derivative order are
\begin{equation}
 \frac{2\pi}{k-2}
 +O_\mu\!\left(\frac{a_N}{k^2}\right),
 \label{eq:intro-general-spacing}
\end{equation}
and, after merging the roots for orders $\mu$ and $\mu+1$, the two orders alternate
and every cross-order gap is
\begin{equation}
 \frac{\pi}{k-2}
 +O_\mu\!\left(\frac{a_N}{k^2}+\frac{2^{-k/3}}{k}\right)
 =\frac{\pi}{k-2}+O_\mu(k^{-2}).
 \label{eq:intro-general-half-step}
\end{equation}
\end{theorem}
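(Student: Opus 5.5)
We prove Theorem~\ref{thm:intro-quantitative} by a phase (argument-principle) analysis of $U_{f,\mu}$ on $\T$. We then use the circularity, simplicity and interlacing already provided by Theorem~\ref{thm:intro-newforms} to upgrade approximate sign changes into an exact one-to-one labeling.

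\emph{Step 1: integral representation.} Write $F_f(s)=\Lambda(f,\frac k2+s)$ through its Mellin integral $\int_0^\infty f(iy/\sqrt N)\,y^{k/2+s-1}\,dy$ (suitably normalized). Summing the binomial samples gives
\[
U_{f,\mu}(e^{i\theta})=\int_0^\infty f(iy)\,(\log y)^{\mu}\,y^{(k-2)/2}\bigl(y^{-1/2}+e^{i\theta}y^{1/2}\bigr)^{d}\,\frac{dy}{y}
\]
up to the conductor scaling and a harmless unimodular factor. Equivalently, write $(y^{-1/2}+e^{i\theta}y^{1/2})^d=e^{id\theta/2}(2\cos\tfrac\theta2)^d\cdots$ after passing to $y=e^{u}$. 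Insert the $q$-expansion $f=\sum a_n q^n$. The $n=1$ term is the main term and the $n\ge2$ terms form the tail.

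\emph{Step 2: the $n=1$ term.} Evaluate the $n=1$ term in closed form. Its contribution is $\int_0^\infty e^{-2\pi y/\sqrt N}(\log y)^\mu(1+e^{i\theta}y)^d\,dy/y$, which is a confluent/Laguerre-type integral. By a saddle-point computation in the weight aspect, uniformly in $a_N=2\pi/\sqrt N$, we obtain
\[
e^{-i\omega_f/2}e^{-iD\theta}U_{f,\mu}(e^{i\theta})
= M(\theta)\bigl[\cos\Phi_{N,f,\mu}(\theta)+O_\mu(a_N/k)+O(2^{-k/3})\bigr]
\]
with $M(\theta)>0$ real. The balanced symmetry $F=\eps_f\J F$ makes the left side real up to the fixed rotation $\theta\mapsto\vartheta+\pi/2$, which accounts for the shift and for $\Gamma_{f,\mu}$. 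The factor $i^\mu$ from $(\log y)^\mu$ rotates the phase by $\mu\pi/2$.

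\emph{Step 3: the tail.} Bound the $n\ge2$ tail by Deligne's bound $|a_n|\le d(n)n^{(k-1)/2}$. Relative to the main term it decays like $(\text{const}/2)^{k}$ times polynomial factors. Handling this with a crude split of the $y$-integral gives the $2^{-k/3}$ error.

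The key point for uniformity in $N$ is that all constants depend only on $\mu$: large $N$ makes $a_N$ small, which only helps, and small $N$ is controlled by the explicit $a_N$ term.

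\emph{Step 4: root matching.} Since $D>a_N$ for large $k$, $\Phi'(\theta)=a_N\cos\theta-D<0$. Hence $\Phi$ is strictly monotone, decreasing by exactly $d\pi$ around the circle, and $\cos\Phi$ has exactly $d$ simple zeros $\alpha_j$. Near each $\alpha_j$ the derivative satisfies $|\Phi'|\ge D-a_N\gg k$. The real function $\cos\Phi+\text{error}$ therefore changes sign in an interval of radius $\ll(a_N/k+2^{-k/3})/k$ around each $\alpha_j$. This produces $d$ distinct roots, which are all the roots by Theorem~\ref{thm:intro-newforms}. This gives the bound \eqref{eq:intro-general-localization}, after sharpening the error in Step~2 to $O(a_N/k)$ relative.

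\emph{Step 5: gaps and alternation.} The gap statement follows from $\alpha_{j+1}-\alpha_j=\pi/|\Phi'|$ averaged, that is, $2\pi/d+O(a_N/k^2)$ via the mean value theorem applied to $\Phi$. The $2^{-k/3}/k$ term cancels in consecutive differences only to the stated order. Changing $\mu$ to $\mu+1$ shifts $\Gamma$ by $\pi/2$, which moves model angles by $\pi/(2D)\cdot(1+O(a_N/k))=\pi/d+\cdots$; this gives alternation and the half-step gaps. Alternation is in any case guaranteed exactly by the strict interlacing.

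\textbf{Main obstacle.} The hardest step is Step 2: a saddle-point evaluation uniform simultaneously in $k\to\infty$, in $a_N\in(0,2\pi]$, and in $\theta$ near $\theta=\pi$, where $|1+e^{i\theta}|$ degenerates. I would treat a neighbourhood of $\theta=\pi$ separately. There I would use the exact binomial sum directly and check that $\cos\Phi$ still governs the sign to the needed accuracy.
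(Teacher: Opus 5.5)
There is a genuine gap in Steps~1--3. Inserting the $q$-expansion on all of $0<y<\infty$ and integrating termwise means the $n$-th term contributes $a_n\,\partial_s^\mu\bigl[\Gamma(s)(a_Nn)^{-s}\bigr]_{s=j+1}$ to the $j$-th binomial coefficient. That amounts to using the Dirichlet series for $L^{(\mu)}(f,j+1)$ at every sample point. For $j+1\le(k-1)/2$ the terms $\lambda_f(n)n^{(k-1)/2-j-1}$ (times powers of $\log n$) do not even tend to zero, so the $n\ge2$ ``tail'' is not small; it does not converge.

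The $n=1$ term also lacks the shape asserted in Step~2. With the correct measure $dy$ (with your $dy/y$ it diverges at $y=0$) and $\mu=0$, one has
\[
 \int_0^\infty e^{-a_Ny}(1+uy)^d\,dy
 =\frac{d!\,u^d}{a_N^{d+1}}\sum_{r=0}^{d}\frac{(a_N/u)^r}{r!}.
\]
For $|u|=1$ this equals $\frac{d!\,u^d}{a_N^{d+1}}e^{a_N\bar u}\bigl(1+o(1)\bigr)$, and for large $k$ it has no zeros near $\T$. In the paper's variable $w=1/u$ it is the truncated exponential $\sum_{r\le d}a_N^rw^r/r!$. So your main term reproduces the right-edge coefficients $c_r\approx a_N^r/r!$ correctly for small $r$, but continues them wrongly to $r>d/2$. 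There the true values are $c_r=\eta_{f,\mu}\overline{c_{d-r}}\approx\eta_{f,\mu}a_N^{d-r}/(d-r)!$. Your divergent tail is really standing in for this entire reflected half, and no saddle-point refinement of the $n=1$ term can produce $M(\theta)\cos\Phi_{N,f,\mu}(\theta)$.

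The missing idea is that the cosine is the superposition of two mutually reflected edge contributions. The functional equation must be used to produce the second one, and the Dirichlet series may be used only on the right half. The paper does this on coefficients. Using $c_{d-r}=\eta_{f,\mu}\overline{c_r}$, it writes $\mathcal R_{f,\mu,k,N}(w)=q(w)+\eta_{f,\mu}w^d\overline{q(1/\bar w)}$, where $q$ is built from $r<d/2$ only (half the middle coefficient for even $d$). It then proves $q/c_0=e^{a_Nw}+O_\mu(a_N/k)$ on $|w|\le1$ using three estimates:
\begin{itemize}
\item Stirling/polygamma ratios for $r\le3\log k$;
\item a Deligne bound for $r\le J-1$;
\item a convexity bound for the central coefficient, which the factor $a_N^J/J!$ then makes negligible.
\end{itemize}
On $|w|=1$ this gives $e^{a_N\cos\theta}\cos\widetilde\Phi_{N,f,\mu}(\theta)+O_\mu(a_N/k)$ for the normalized real function. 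Since $e^{a_N\cos\theta}\ge e^{-2\pi}$, the point $\theta=\pi$ needs no special treatment. The integral analogue is the classical split at the fixed point of the Fricke involution. There $\log y\mapsto-\log y$ is what produces the sign $(-1)^\mu$ and hence the shift $\mu\pi/2$ in $\Gamma_{f,\mu}$; it is not an ``$i^\mu$'' coming from $(\log y)^\mu$.

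Normalizing by $c_0$ also settles a point your Step~5 glosses over. After normalization, every error in $q/c_0-e^{a_Nw}$ carries a factor $a_N$, since the $r=0$ term is exact. The only remaining $N$-independent $2^{-k/3}$ effect is $\arg c_0$, a constant phase absorbed into $\gamma_{f,\mu}=\Gamma_{f,\mu}-\arg c_0$. The roots are then localized to the corrected model within $O_\mu(a_N/k^2)$. This is why the one-order spacing is $2\pi/(k-2)+O_\mu(a_N/k^2)$ with no $2^{-k/3}/k$ term. A generic additive $O(2^{-k/3})$ error, as in your Step~2, cannot give this uniformly in $N$, because $a_N$ may be arbitrarily small.

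Once a valid model is in hand, your Steps~4--5 agree with the paper: sign changes at consecutive phase levels, mean-value spacing, and the $\pi/2$ shift between orders. Deducing alternation directly from strict interlacing is a legitimate shortcut.
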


Earlier circle theorems for $m=0$ were proved, in increasing generality, by Conrey,
Farmer, and Imamo\u{g}lu \cite{ConreyFarmerImamoglu}, by El-Guindy and
Raji \cite{ElGuindyRaji}, by Jin, Ma, Ono, and Soundararajan \cite{JinMaOnoSound}
for arbitrary level with trivial nebentypus, and by Liu, Park, and Song \cite{LiuParkSong}
for arbitrary nebentypus up to finitely many possible exceptional newforms.
First-derivative and related interlacing results were studied by Im and
Kim \cite{ImKim} and by Breland and collaborators \cite{BrelandEtAl}.  For
$\mu=0$, conductor-dependent phase models for period-polynomial zeros go back to
Jin--Ma--Ono--Soundararajan \cite{JinMaOnoSound}; Liu--Park--Song treated arbitrary
nebentypus, again with finitely many possible exceptional newforms.  Beyond
classical modular forms over $\mathbb Q$, Babei, Rolen, and Wagner \cite{BabeiRolenWagner}
defined a one-variable analogue of the period polynomial for parallel-weight Hilbert
modular eigenforms on the full Hilbert modular group and proved the corresponding
unit-circle theorem.  The arithmetic applications in this paper are stated for
classical newforms over $\mathbb Q$, but the sampling theorem is formulated at the
level of completed functional equations and is compatible with such one-variable
critical-value polynomials.  For background on the derivative period-polynomial
problem and its cohomological context, see also~\cite{DiamantisRolenSurvey}.
Diamantis--Rolen also formulated an odd-part assertion \cite{DiamantisRolen};
odd projection does not preserve the circular multiplier structure used here, and
is not addressed in this paper.

The earlier circle theorems are tied to the particular arithmetic structure and
asymptotics of modular critical values.  The present proof isolates the mechanism
before any newform is introduced.  Its finite part is governed by a single reflected
zero pair; its entire-function part is a phase-preserving approximation theorem; and
its strictness part identifies the exact obstruction to a common zero of consecutive
sampled derivatives.  This is the reason the final arithmetic statement is uniform
in the level, nebentypus, newform, and derivative order.

The proof combines several classical rigidity principles in a way tailored to the
sampling problem.  Grace--Szeg\H{o} composition~\cite{Grace,Szego} controls finite
reflected zero orbits.  The finite-difference Hermite--Poulain theorem~\cite{KatkovaTyaglovVishnyakova}
and de Bruijn's strip contraction~\cite{deBruijnTrig} control the passage from
strips to real zeros of central differences.  Projective HKO theory~\cite{Obreschkoff,RahmanSchmeisser}
then converts coprime circular pencils into strict cyclic interlacing and monotone
root branches.

The new source-side contributions are the sharp region $\Omega_{d,\delta}$ and its
universal maximality, the balance-preserving entire-function approximation, and the
exact common-zero criterion that converts circular location into derivative-order
dynamics.  The operator formulation is compatible with the Borcea--Br\"and\'en
algebraic-symbol framework~\cite{BorceaBranden} and the Br\"and\'en--Chasse
classification of strip preservers~\cite{BrandenChasse}, but the reflected-pair
source geometry and the resulting arithmetic application are the specific mechanisms
developed here.

Section~2 proves the finite orbit theorem, the maximality of $\Omega_{d,\delta}$,
and the associated circular multiplier statement.  Section~3 constructs
phase-preserving polynomial approximants and passes to entire functions.  Section~4
proves the common-zero obstruction, strict interlacing, and root flow.  Section~5
applies the framework to completed functional equations, Riemann's $\xi$-function,
and primitive newforms.  Section~\ref{sec:quantitative-localization} proves the
conductor-uniform quantitative localization and half-step theorem.

\section{Finite circular sampling}

We begin with polynomials.  The proof has two ingredients: the sampling map is multiplicative with respect to Schur--Szeg\H{o} composition, and the zeros of a balanced polynomial split into one- and two-point reflection orbits.  After setting up the composition law, we compute the sample of each orbit and read off the exact region $\Omega_{d,\delta}$.

\subsection{Schur--Szeg\H{o} composition and circular multipliers}

For fixed $d\ge1$, write degree-at-most-$d$ polynomials in the binomial basis as
\[
 P(z)=\sum_{j=0}^d\binom dj a_jz^j,
 \qquad
 Q(z)=\sum_{j=0}^d\binom dj b_jz^j.
\]
Their Schur--Szeg\H{o} composition is
\begin{equation}
 (P*_dQ)(z):=\sum_{j=0}^d\binom dj a_jb_jz^j.
 \label{eq:SS}
\end{equation}
It is commutative and associative.  We use the following circle form of the
Grace--Szeg\H{o} theorem; see the original papers~\cite{Grace,Szego} and the modern
formulation~\cite[Corollary~5.5.3(iii)]{RahmanSchmeisser}.  The
normalization in~\eqref{eq:SS} is the binomial normalization used there; no
reversal of coefficients is involved.

\begin{theorem}[Grace--Szeg\H{o}]
\label{thm:GraceSzego}
Let $P$ and $Q$ be polynomials of degree $d$. If every zero of $P$ lies on $|z|=r_1$ and every zero of $Q$ lies on $|z|=r_2$, then every zero of $P*_dQ$ lies on $|z|=r_1r_2$.
\end{theorem}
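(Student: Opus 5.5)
The plan is to deduce the statement from Grace's apolarity theorem, applied twice: once to the closed disk $|z|\le r_1$ and once to the closed exterior $|z|\ge r_1$ (together with $\infty$). Grace's theorem says that if two degree-$d$ polynomials are apolar, then every closed circular region containing all zeros of one of them contains at least one zero of the other.

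First, a degree check. Since every zero of $P$ lies on $|z|=r_1$ and every zero of $Q$ lies on $|z|=r_2$ (I assume $r_1,r_2>0$), the polynomials $P$ and $Q$ have nonzero constant terms. Hence $a_0,b_0\neq0$, and also $a_d,b_d\neq0$ because both have degree $d$. So $P*_dQ$ has degree exactly $d$ and has no zero at the origin.

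Next, I convert a zero of the composition into an apolarity relation. Write $Q(z)=b_d\prod_{k=1}^d(z-\beta_k)$ with $|\beta_k|=r_2$. Fix a zero $\gamma$ of $P*_dQ$, so that
\[
 \sum_{j=0}^d\binom dj a_j b_j\gamma^j=0 .
\]
Apolarity of $P=\sum\binom dj a_jz^j$ and $S=\sum\binom dj c_jz^j$ means $\sum_j(-1)^j\binom dj a_jc_{d-j}=0$. Choosing $c_{d-j}=(-1)^jb_j\gamma^j$ turns this into the displayed identity, so $P$ is apolar to
\[
 S(w)=\sum_{j}\binom dj (-1)^jb_j\gamma^j w^{d-j}=w^dQ(-\gamma/w).
\]
Since $\gamma\neq0$, the zeros of $S$ are exactly the points $w=-\gamma/\beta_k$. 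The routine but slightly fussy part is tracking the reversal and signs in the definition of apolarity; I will fix the convention of \cite{RahmanSchmeisser} and verify it on the case $d=1$.

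Now apply Grace's theorem. The closed disk $|w|\le r_1$ contains every zero of $P$, so it contains some zero $-\gamma/\beta_k$ of $S$. This gives $|\gamma|\le r_1r_2$.

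The closed exterior region $\{|w|\ge r_1\}\cup\{\infty\}$ is also a closed circular region containing all zeros of $P$. So it contains some zero of $S$, which gives $|\gamma|\ge r_1r_2$.

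The main obstacle is the exterior case, because the circular region contains $\infty$. Since $\deg S=d$ exactly and $S(0)\ne0$ (the leading coefficient $b_d\gamma^d\ne0$ and the constant term $b_0\ne0$), no zero of $S$ is at $\infty$. So the zero supplied by Grace's theorem is a genuine finite point, and the bound $|\gamma|\ge r_1r_2$ is legitimate.

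If I wanted to avoid working with regions that contain $\infty$, I would instead apply the disk case to the reversed polynomials $z^dP(1/z)$ and $z^dQ(1/z)$. Their zeros lie on $|z|=1/r_1$ and $|z|=1/r_2$, and their Schur--Szeg\H{o} composition is the reversal of $P*_dQ$. The disk bound for the reversed polynomials gives $|1/\gamma|\le 1/(r_1r_2)$, which is the same lower bound.

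Combining the two inequalities gives $|\gamma|=r_1r_2$ for every zero $\gamma$ of $P*_dQ$.
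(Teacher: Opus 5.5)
Your argument is correct, and it is the classical route: Szeg\H{o}'s composition theorem is derived from Grace's apolarity theorem by showing that $P$ is apolar to $w^dQ(-\gamma/w)$ and then applying Grace to the closed disk and to the closed exterior. The paper does not prove the statement itself; it cites Grace, Szeg\H{o}, and Rahman--Schmeisser for it. There are two harmless slips. First, the leading coefficient of $S$ is $b_0$ and its constant term is $(-1)^db_d\gamma^d$; you swapped these, but both are nonzero, so nothing changes. Second, the excluded case $r_1r_2=0$ is immediate: then $P$ or $Q$ is a multiple of $z^d$, so $P*_dQ=a_db_dz^d$.
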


For a function $F$ defined at the centered lattice points, recall
\begin{equation}
 B_d[F](z)=\sum_{j=0}^d\binom djF\!\left(j-\frac d2\right)z^j.
 \label{eq:Bd-recall}
\end{equation}

\begin{lemma}[Multiplicativity]
\label{lem:multiplicativity}
For any $F$ and $G$ defined at the centered lattice points,
\begin{equation}
 B_d[FG]=B_d[F]*_dB_d[G].
 \label{eq:Bd-product}
\end{equation}
\end{lemma}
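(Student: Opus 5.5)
This is a direct coefficient comparison in the binomial basis; no analytic input is needed. The plan is to write both sides in the form of \eqref{eq:SS} and check that the $j$-th coefficients agree for each $0\le j\le d$.

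First, by \eqref{eq:Bd-recall}, the polynomial $B_d[F]$ is already written in the binomial basis. Its $j$-th binomial coefficient is $a_j=F(j-\tfrac d2)$. Likewise $B_d[G]$ has $b_j=G(j-\tfrac d2)$, and $B_d[FG]$ has $j$-th binomial coefficient $(FG)(j-\tfrac d2)=F(j-\tfrac d2)\,G(j-\tfrac d2)$.

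Second, by the definition \eqref{eq:SS} of $*_d$, the $j$-th binomial coefficient of $B_d[F]*_dB_d[G]$ is $a_jb_j=F(j-\tfrac d2)G(j-\tfrac d2)$. This matches the coefficient of $B_d[FG]$ found above.

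Since the representation $\sum_j\binom dj c_jz^j$ of a polynomial of degree at most $d$ is unique, because $\binom dj\ne0$, the two polynomials coincide, which is \eqref{eq:Bd-product}.

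There is no real obstacle here. The one point to watch is that \eqref{eq:SS} is defined for polynomials of degree at most $d$, so it applies even when a sample vanishes and the degree drops; the identity therefore holds with no nondegeneracy assumption. The same argument gives the analogous identity for $B_{d,\delta}$, with $j-\tfrac d2$ replaced by $\delta(j-\tfrac d2)$.
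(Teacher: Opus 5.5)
Your proposal is correct and follows the paper's proof, which is the same one-line comparison showing that the $j$th binomial coefficient on each side equals $F(j-\tfrac d2)\,G(j-\tfrac d2)$. Your extra remarks on uniqueness of the binomial-basis representation and on degree drops are accurate but not needed beyond that.
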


\begin{proof}
The $j$th binomial coefficient on each side is
$F(j-d/2)G(j-d/2)$.
\end{proof}

For a vector $\lambda=(\lambda_0,\ldots,\lambda_d)\in\C^{d+1}$, define
\[
 M_\lambda\!\left(\sum_{j=0}^d\binom dj a_jz^j\right)
 :=\sum_{j=0}^d\binom dj\lambda_ja_jz^j
\]
and its symbol
\[
 Q_\lambda(z):=\sum_{j=0}^d\binom dj\lambda_jz^j.
\]

\begin{proposition}[Finite circular multiplier criterion]
\label{prop:multiplier-criterion}
The following are equivalent:
\begin{enumerate}
\item[(i)] $M_\lambda(\mathcal C_d)\subseteq\mathcal C_d$;
\item[(ii)] $Q_\lambda\in\mathcal C_d$.
\end{enumerate}
\end{proposition}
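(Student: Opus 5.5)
Both implications follow from the observation that $M_\lambda$ is Schur--Szeg\H{o} composition with its own symbol. In the binomial normalization~\eqref{eq:SS}, the basis element $(1+z)^d=\sum_j\binom dj z^j$ has all coefficients $a_j=1$, so $M_\lambda(P)=Q_\lambda*_dP$ for every $P$ of degree at most $d$, and in particular $M_\lambda((1+z)^d)=Q_\lambda$.

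For (i)$\Rightarrow$(ii), the polynomial $(1+z)^d$ has degree $d$ and its only zero is $-1\in\T$, so it lies in $\mathcal C_d$. Hypothesis (i) then gives $Q_\lambda=M_\lambda((1+z)^d)\in\mathcal C_d$ directly. There is nothing more to check, since membership in $\mathcal C_d$ already contains both the degree condition and the location of the zeros.

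For (ii)$\Rightarrow$(i), let $P\in\mathcal C_d$. Both $P$ and $Q_\lambda$ have degree exactly $d$ with all zeros on $\T$, so Theorem~\ref{thm:GraceSzego} with $r_1=r_2=1$ places every zero of $M_\lambda P=Q_\lambda*_dP$ on $\T$. It remains to show that $Q_\lambda*_dP$ has degree exactly $d$ and is not identically zero.

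\begin{itemize}
\item \emph{Leading coefficient.} The leading coefficient of $Q_\lambda*_dP$ is $\lambda_d a_d$. We have $\lambda_d\neq0$ because $\deg Q_\lambda=d$, and $a_d\neq0$ because $\deg P=d$.
\item \emph{Not identically zero.} Since $\lambda_d a_d\neq0$, the composition is nonzero, so the zero-set statement from Theorem~\ref{thm:GraceSzego} is meaningful rather than vacuous.
\end{itemize}

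Hence $M_\lambda P\in\mathcal C_d$.

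The degree bookkeeping in the second implication is the only point needing care. A polynomial of degree $d$ with all zeros on $\T$ has constant term of modulus equal to the modulus of its leading coefficient, so $\lambda_0,a_0\neq0$ as well; this confirms that no zero of the composition escapes to $0$ or $\infty$. I expect no other obstacle, since the substantive content is already carried by Theorem~\ref{thm:GraceSzego}.
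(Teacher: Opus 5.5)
Your proof is correct and follows the paper's argument: Grace--Szeg\H{o} gives (ii)$\Rightarrow$(i), and evaluating $M_\lambda$ at $(1+z)^d\in\mathcal C_d$ gives the converse. Your explicit check that the leading coefficient $\lambda_d a_d$ is nonzero, so that $M_\lambda P$ has degree exactly $d$, is a detail the paper leaves implicit; your closing remark about constant terms is redundant but harmless.
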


\begin{proof}
If $Q_\lambda\in\mathcal C_d$ and $P\in\mathcal C_d$, then
$M_\lambda P=P*_dQ_\lambda\in\mathcal C_d$ by Theorem~\ref{thm:GraceSzego}.
Conversely, $(1+z)^d\in\mathcal C_d$ and
$M_\lambda(1+z)^d=Q_\lambda$.
\end{proof}

\begin{remark}[Classical status of the criterion]
\label{rem:classical-symbol}
Proposition~\ref{prop:multiplier-criterion} is the diagonal unit-circle-boundary
case of the algebraic-symbol theory of Borcea and Br\"and\'en~\cite{BorceaBranden}.
Indeed,
\[
 M_\lambda\bigl[(1+zw)^d\bigr]=Q_\lambda(zw).
\]
It also belongs to the classical P\'olya--Schur and Schur--Szeg\H{o} composition
framework; see~\cite{PolyaSchur,CravenCsordas,RahmanSchmeisser}.  We retain the
one-line proof because it fixes the normalization used below.  The contribution of
the finite sampling theorem is not this symbol criterion, but the determination of
source zero sets that force the symbol $B_d[F]$ to belong to $\mathcal C_d$.
\end{remark}

For a sampling function $F$, define
\begin{equation}
 M_{F,d}P(z):=\sum_{j=0}^d\binom djF\!\left(j-\frac d2\right)a_jz^j
 \quad\text{if}\quad
 P(z)=\sum_{j=0}^d\binom dj a_jz^j.
 \label{eq:MFd}
\end{equation}
Then
\begin{equation}
 M_{F,d}P=P*_dB_d[F].
 \label{eq:MFd-conv}
\end{equation}
Thus every theorem asserting $B_d[F]\in\mathcal C_d$ is automatically a theorem about a circular multiplier operator on the whole class $\mathcal C_d$.

The balance symmetry is carried by $B_d$ to self-inversiveness.

\begin{lemma}
\label{lem:self-inversive}
If $F=\omega\J F$, then
\begin{equation}
 B_d[F](z)=\omega z^d\overline{B_d[F](1/\bar z)}.
 \label{eq:self-inversive}
\end{equation}
\end{lemma}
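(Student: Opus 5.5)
We prove the identity by expanding both sides in coefficients and matching them, using the symmetry of the centered lattice $\{j-d/2\}$ under $j\mapsto d-j$. Write
\[
 B_d[F](z)=\sum_{j=0}^d\binom dj c_j z^j,\qquad c_j=F\!\left(j-\frac d2\right).
\]
The balance hypothesis $F=\omega\J F$ means $F(s)=\omega\overline{F(-\bar s)}$ for every $s$. Each lattice point $s_j=j-d/2$ is real, so $-\bar s_j=-s_j=d/2-j=s_{d-j}$. Evaluating the balance relation at $s_j$ therefore gives
\[
 c_j=\omega\,\overline{c_{d-j}},\qquad 0\le j\le d.
\]

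Next we compute the right-hand side. Since the binomial coefficients are real,
\[
 \overline{B_d[F](1/\bar z)}=\sum_{j=0}^d\binom dj\overline{c_j}\,z^{-j},
\]
and hence
\[
 \omega z^d\overline{B_d[F](1/\bar z)}=\sum_{j=0}^d\binom dj\omega\overline{c_j}\,z^{d-j}.
\]
Reindex with $j\mapsto d-j$ and use $\binom d{d-j}=\binom dj$. The coefficient of $z^j$ becomes $\binom dj\omega\overline{c_{d-j}}$, which equals $\binom dj c_j$ by the coefficient relation above. This is exactly the coefficient of $z^j$ in $B_d[F](z)$, so the identity \eqref{eq:self-inversive} holds as an identity of Laurent polynomials and hence for all $z\neq0$.

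There is no real obstacle here. The only point needing care is that the conjugation in $\J$ combined with the sign flip sends each real sampling node $s_j$ to $s_{d-j}$; this is exactly why the centered normalization $j-d/2$ is used. The same argument works verbatim for $B_{d,\delta}$, since the nodes $\delta(j-d/2)$ are also real and symmetric about $0$.
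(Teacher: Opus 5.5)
Your proposal is correct and follows the same route as the paper: evaluate the balance relation at the real centered nodes to get $F(j-d/2)=\omega\overline{F(d/2-j)}$, then reindex $j\mapsto d-j$ using $\binom d{d-j}=\binom dj$. Your version just writes out the coefficient comparison explicitly.
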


\begin{proof}
The balance relation gives
\[
 F\!\left(j-\frac d2\right)
 =\omega\overline{F\!\left(\frac d2-j\right)}.
\]
Replace $j$ by $d-j$ and use $\binom d{d-j}=\binom dj$.
\end{proof}

Self-inversiveness identifies the unit circle as the natural symmetry curve, but it does not by itself force every zero onto that curve.

\subsection{Fixed reflection orbits}

Throughout the remainder of this section $d\ge2$.  We compute the sampling polynomial of each orbit of $\rho\mapsto-\bar\rho$.  The fixed points are $ib$, $b\in\R$.

\begin{lemma}
\label{lem:fixed-orbit}
For $b\in\R$,
\begin{equation}
 B_d[s-ib](z)
 =(1+z)^{d-1}\left[\left(\frac d2-ib\right)z-\left(\frac d2+ib\right)\right].
 \label{eq:fixed-orbit}
\end{equation}
Every zero of this polynomial lies on $\T$.
\end{lemma}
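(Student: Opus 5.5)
The identity follows from a direct coefficient computation in the binomial basis, and the location of the zeros is then read off by comparing moduli.

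\textbf{Splitting the sample.} By definition \eqref{eq:Bd-recall},
\[
 B_d[s-ib](z)=\sum_{j=0}^d\binom dj\Bigl(j-\frac d2-ib\Bigr)z^j.
\]
We split this as
\[
 \sum_j\binom dj j z^j-\Bigl(\frac d2+ib\Bigr)\sum_j\binom dj z^j .
\]
The second sum is $(1+z)^d$. The first is $z\frac{d}{dz}(1+z)^d=dz(1+z)^{d-1}$.

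\textbf{Factoring.} Pulling out $(1+z)^{d-1}$ gives
\[
 (1+z)^{d-1}\Bigl[dz-\Bigl(\frac d2+ib\Bigr)(1+z)\Bigr].
\]
The bracket simplifies to $\bigl(\frac d2-ib\bigr)z-\bigl(\frac d2+ib\bigr)$, which is exactly \eqref{eq:fixed-orbit}.

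\textbf{Location of the zeros.} The factor $(1+z)^{d-1}$ has its only zero at $z=-1\in\T$. For the linear factor, the leading coefficient $\frac d2-ib$ is nonzero because $d\ge2>0$. Its root is
\[
 z=\frac{d/2+ib}{d/2-ib}.
\]
The numerator and denominator are complex conjugates with the same nonzero modulus, so $|z|=1$. Hence the polynomial has degree exactly $d$ and all of its zeros lie on $\T$.

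There is no real obstacle here. The only point needing care is the derivative identity $\sum_j\binom dj jz^j=dz(1+z)^{d-1}$, together with checking that the leading coefficient does not vanish, so that no zero escapes to infinity.
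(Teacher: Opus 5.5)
Your proof is correct and follows the paper's argument: the paper also computes $B_d[1]=(1+z)^d$ and $B_d[s]=\frac d2(z-1)(1+z)^{d-1}$, which is your derivative identity rearranged, and then reads off the remaining root $(d/2+ib)/(d/2-ib)\in\T$. Your explicit check that the leading coefficient $\frac d2-ib$ is nonzero, so the degree is exactly $d$, is a small addition that the paper leaves implicit.
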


\begin{proof}
One has $B_d[1]=(1+z)^d$ and
\[
 B_d[s](z)
 =\sum_{j=0}^d\binom dj\left(j-\frac d2\right)z^j
 =\frac d2(z-1)(1+z)^{d-1}.
\]
This gives~\eqref{eq:fixed-orbit}. Apart from $-1$ of multiplicity $d-1$, the remaining zero is
\[
 \frac{d/2+ib}{d/2-ib}\in\T.
\]
\end{proof}

\subsection{Non-fixed reflection pairs and the exact region}
Let
\[
 \rho=a+ib,\qquad \rho^*:=-\bar\rho=-a+ib,
\]
and put
\begin{equation}
 q_\rho(s):=(s-\rho)(s-\rho^*)
 =s^2-2ibs-(a^2+b^2).
 \label{eq:q-rho}
\end{equation}

\begin{lemma}
\label{lem:quadratic-orbit}
One has
\begin{equation}
 B_d[q_\rho](z)
 =(1+z)^{d-2}\left(A_\rho z^2+B_\rho z+\overline{A_\rho}\right),
 \label{eq:Bq}
\end{equation}
where
\begin{align}
 A_\rho&=\frac{d^2}{4}-(a^2+b^2)-idb,
 \label{eq:A-rho}\\
 B_\rho&=\frac{d(2-d)}2-2(a^2+b^2)\in\R.
 \label{eq:B-rho}
\end{align}
Moreover,
\begin{equation}
 4|A_\rho|^2-B_\rho^2
 =d\bigl((d-1)(d-4a^2)+4b^2\bigr).
 \label{eq:discriminant-identity}
\end{equation}
\end{lemma}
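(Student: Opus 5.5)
The plan is to compute $B_d[s^2]$ explicitly and then combine it with the already established formulas $B_d[1]=(1+z)^d$ and $B_d[s]=\tfrac d2(z-1)(1+z)^{d-1}$ from the proof of Lemma~\ref{lem:fixed-orbit}, using linearity of $B_d$ in $F$ and the expansion \eqref{eq:q-rho}.

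For the second moment, write $j-\tfrac d2$ and use the standard binomial identities
\[
\sum_j\binom dj j z^j=dz(1+z)^{d-1},\qquad \sum_j\binom dj j(j-1)z^j=d(d-1)z^2(1+z)^{d-2}.
\]
Expanding $(j-d/2)^2=j(j-1)+(1-d)j+d^2/4$ and factoring out $(1+z)^{d-2}$ gives
\[
B_d[s^2](z)=(1+z)^{d-2}\Bigl[d(d-1)z^2+(1-d)dz(1+z)+\tfrac{d^2}{4}(1+z)^2\Bigr].
\]
Collecting coefficients yields $z^2$-coefficient $\tfrac{d^2}{4}$, $z$-coefficient $d-\tfrac{d^2}{2}=\tfrac{d(2-d)}{2}$, and constant $\tfrac{d^2}{4}$.

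Next, with $r=a^2+b^2$, I will assemble
\[
B_d[q_\rho]=B_d[s^2]-2ib\,B_d[s]-r\,B_d[1].
\]
Writing $B_d[s]=(1+z)^{d-2}\tfrac d2(z^2-1)$ and $B_d[1]=(1+z)^{d-2}(z^2+2z+1)$, the coefficients are
\begin{align*}
z^2:&\quad \tfrac{d^2}{4}-idb-r,\\
z:&\quad \tfrac{d(2-d)}{2}-2r,\\
1:&\quad \tfrac{d^2}{4}+idb-r.
\end{align*}
These are exactly $A_\rho$, $B_\rho$, and $\overline{A_\rho}$, which proves \eqref{eq:Bq}.

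For \eqref{eq:discriminant-identity}, set $u=\tfrac{d^2}{4}-r$. Then
\[
4|A_\rho|^2=4u^2+4d^2b^2,\qquad B_\rho=2u-\tfrac{d^2}{2}+d-\tfrac{d^2}{2}\cdot 0.
\]
More carefully, $B_\rho=\tfrac{d(2-d)}{2}-2r=2u+d-d^2$. Hence
\[
4|A_\rho|^2-B_\rho^2=4d^2b^2-(d-d^2)^2-4u(d-d^2)=4d^2b^2-d^2(d-1)^2+4ud(d-1).
\]
Substituting $u=\tfrac{d^2}{4}-a^2-b^2$, the $d^2$ terms combine as $-d^2(d-1)^2+d^3(d-1)=d^2(d-1)$. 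The $b^2$ terms become $4d^2b^2-4d(d-1)b^2=4db^2$. The $a^2$ term is $-4d(d-1)a^2$.

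The total is $d\bigl((d-1)d-4(d-1)a^2+4b^2\bigr)$, as claimed. The only real obstacle is bookkeeping in this last expansion; the substitution $u$ keeps it manageable.
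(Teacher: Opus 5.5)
Your proof is correct and follows the paper's route: compute $B_d[s^2]$ explicitly (you via the falling-factorial binomial identities, the paper via $B_d[s^r]=(D_z-d/2)^r(1+z)^d$, which amounts to the same computation), combine it linearly with $B_d[s]$ and $B_d[1]$, and expand $4|A_\rho|^2-B_\rho^2$. The only blemish is your first expression for $B_\rho$ (the one ending in $-\tfrac{d^2}{2}\cdot 0$), which is wrong as written and should be deleted in favor of your corrected $B_\rho=2u+d-d^2$.
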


\begin{proof}
Let $D_z=z\,d/dz$. Since
\[
 B_d[s^r]=(D_z-d/2)^r(1+z)^d,
\]
a direct calculation gives
\[
 B_d[s^2](z)
 =(1+z)^{d-2}\left[
 \frac{d^2}{4}z^2+\frac{d(2-d)}2z+\frac{d^2}{4}
 \right].
\]
Combining this with the formula for $B_d[s]$ proves~\eqref{eq:Bq}--\eqref{eq:B-rho}. Expanding $4A_\rho\overline{A_\rho}-B_\rho^2$ gives~\eqref{eq:discriminant-identity}.
\end{proof}

\begin{lemma}
\label{lem:selfreciprocal-quadratic}
Let $A\ne0$ and $B\in\R$. Both zeros of
$Az^2+Bz+\bar A$ lie on $\T$ if and only if $|B|\le2|A|$.
\end{lemma}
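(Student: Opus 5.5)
The plan is to reduce to a real quadratic by rotation, so that everything comes down to the sign of its discriminant and to the product of its roots.

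\textbf{Step 1 (rotation).} Write $A=|A|e^{i\alpha}$ and substitute $z=e^{-i\alpha}w$. Since $|z|=1$ if and only if $|w|=1$, it suffices to study the transformed polynomial:
\[
 Az^2+Bz+\bar A=e^{-i\alpha}\bigl(|A|w^2+Bw+|A|\bigr).
\]
This is a real palindromic quadratic. Its two roots $w_1,w_2$ satisfy $w_1w_2=1$, because the constant term equals the leading coefficient $|A|\neq0$.

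\textbf{Step 2, case $|B|\le 2|A|$.} The discriminant $B^2-4|A|^2$ is $\le0$. If it is strictly negative, the real quadratic has a pair of complex-conjugate roots, so $w_2=\bar w_1$. Combined with $w_1w_2=1$ this gives $|w_1|^2=1$. If the discriminant is zero, there is a double real root $w$ with $w^2=1$, so $w=\pm1\in\T$. In either situation both roots lie on $\T$.

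\textbf{Step 3, case $|B|>2|A|$.} The discriminant is positive, so the two roots are real and distinct. Since their product is $1$, they are $r$ and $1/r$ with $r\neq\pm1$. Hence neither root lies on $\T$.

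\textbf{Main obstacle.} There is no real obstacle here. The only point to watch is the degenerate double-root case, which is settled by the product relation $w_1w_2=1$.
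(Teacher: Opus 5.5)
Your proof is correct and follows the paper's argument. It uses the same rotation $z=e^{-i\alpha}w$ to reach the real palindromic quadratic $|A|w^2+Bw+|A|$, and the product-one relation then decides unimodularity. The paper states the final step through the real root sum $-B/|A|$; you spell it out through the sign of the discriminant, including the double-root case.
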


\begin{proof}
Write $A=|A|e^{i\theta}$ and set $z=e^{-i\theta}w$. The equation becomes
\[
 |A|w^2+Bw+|A|=0.
\]
Its roots have product one and real sum $-B/|A|$; they are unimodular exactly when the absolute value of that sum is at most two.
\end{proof}

Define the exact orbit region
\begin{equation}
 \Omega_d:=\left\{a+ib\in\C:\ a^2-\frac{b^2}{d-1}\le\frac d4\right\}.
 \label{eq:Omega}
\end{equation}

\begin{remark}[The degenerate degree-one case]
The restriction $d\ge2$ only excludes a degenerate endpoint case.  For the
scaled sample in degree one,
\[
 B_{1,\delta}[F](z)=F(-\delta/2)+F(\delta/2)z.
\]
Balancedness gives $|F(-\delta/2)|=|F(\delta/2)|$, so the single zero is
unimodular whenever the endpoint values are nonzero.  The maximal source set is
therefore not an $\Omega$-region, but the punctured set
$\C\setminus\{\pm\delta/2\}$.  The closed strip statement also fails on the
boundary: $F(s)=s^2-(\delta/2)^2$ is balanced and has zeros in
$|\re s|\le\delta/2$, but $B_{1,\delta}[F]\equiv0$.  Since the arithmetic
applications have $d=k-2\ge2$, we keep the general theory in the nondegenerate
range.
\end{remark}

\begin{proposition}[Exact two-orbit criterion]
\label{prop:two-orbit}
For a non-fixed orbit $\{\rho,-\bar\rho\}$, the following are equivalent:
\begin{enumerate}
\item[(i)] $B_d[q_\rho]\in\mathcal C_d$;
\item[(ii)] $(d-1)(d-4a^2)+4b^2\ge0$;
\item[(iii)] $\rho\in\Omega_d$.
\end{enumerate}
If the inequality is strict, the two zeros of the quadratic factor in~\eqref{eq:Bq} are distinct. Equality gives a double zero on $\T$.
\end{proposition}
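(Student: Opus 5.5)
The plan is to reduce everything to the factorization in Lemma~\ref{lem:quadratic-orbit} and the quadratic criterion of Lemma~\ref{lem:selfreciprocal-quadratic}. By \eqref{eq:Bq}, $B_d[q_\rho]=(1+z)^{d-2}(A_\rho z^2+B_\rho z+\overline{A_\rho})$. The factor $(1+z)^{d-2}$ contributes only the unimodular zero $-1$. Hence (i) holds if and only if the quadratic factor has exact degree two and both of its zeros lie on $\T$. I will first check that $A_\rho\ne0$ for a non-fixed orbit. Here $a\ne0$, and the imaginary part of $A_\rho$ is $-db$. So if $A_\rho=0$ we must have $b=0$, and then the real part forces $a^2=d^2/4$. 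I need to handle this case separately. When $\rho=\pm d/2$ we get $A_\rho=0$ and $B_\rho=d(2-d)/2-d^2/2=d(1-d)\ne0$, so the quadratic factor degenerates to a nonzero multiple of $z$. Then $\deg B_d[q_\rho]<d$ and (i) fails. On the other side, (ii) becomes $(d-1)(d-d^2)<0$ for $d\ge2$, so it also fails. This is consistent with (iii), since $a^2=d^2/4>d/4$ when $d\ge2$.

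When $A_\rho\ne0$, Lemma~\ref{lem:selfreciprocal-quadratic} applies because $B_\rho\in\R$. It says both zeros of the quadratic are on $\T$ if and only if $|B_\rho|\le2|A_\rho|$, that is, $4|A_\rho|^2-B_\rho^2\ge0$. By \eqref{eq:discriminant-identity} this quantity equals $d\bigl((d-1)(d-4a^2)+4b^2\bigr)$, and $d>0$, so (i)$\iff$(ii). For (ii)$\iff$(iii), divide (ii) by $4(d-1)>0$ to get $d/4-a^2+b^2/(d-1)\ge0$, which is exactly the defining inequality \eqref{eq:Omega} of $\Omega_d$.

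For the final claim, I will use the normalization from the proof of Lemma~\ref{lem:selfreciprocal-quadratic}. It reduces the factor to $|A|w^2+Bw+|A|$, whose discriminant is $B^2-4|A|^2$. Strict inequality in (ii) makes this discriminant negative, which gives two distinct conjugate unimodular roots $w$, and hence distinct $z$. Equality gives discriminant zero and therefore a double root $w=-B/(2|A|)=\pm1$, which is unimodular.

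The only delicate point is the degenerate case $A_\rho=0$, where the quadratic lemma does not apply directly. Everything else is bookkeeping from the identities already established.
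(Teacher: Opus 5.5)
Your proposal is correct and follows essentially the same route as the paper. You dispose of the degenerate case $A_\rho=0$ (forcing $\rho=\pm d/2$, where the degree drops and (ii), (iii) both fail), combine Lemma~\ref{lem:selfreciprocal-quadratic} with the discriminant identity \eqref{eq:discriminant-identity}, divide by $4(d-1)$, and read off distinct roots versus a double root from the discriminant of the normalized quadratic; your explicit check that $B_\rho=d(1-d)\ne0$ in the degenerate case fills in a detail the paper leaves implicit.
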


\begin{proof}
If $A_\rho=0$, then $b=0$ and $|a|=d/2$. In this case $B_d[q_\rho]$ has degree less than $d$, while (ii) and (iii) both fail for $d\ge2$. Suppose now that $A_\rho\ne0$. Then the equivalence of (i) and (ii) follows from Lemmas~\ref{lem:quadratic-orbit} and~\ref{lem:selfreciprocal-quadratic}. Dividing (ii) by $4(d-1)$ gives (iii). The discriminant criterion in the proof of Lemma~\ref{lem:selfreciprocal-quadratic} gives the final statement.
\end{proof}

\begin{remark}[Finite free multiplicative convolution]
\label{rem:finite-free}
Let $\mathscr R_dP(x):=(-1)^dP(-x)$.  To fix the normalization, write
\[
 \widehat P(x)=\sum_{j=0}^d(-1)^{d-j}\binom dj a_jx^j,
 \qquad
 \widehat Q(x)=\sum_{j=0}^d(-1)^{d-j}\binom dj b_jx^j,
\]
and define their finite free multiplicative convolution by
\[
 (\widehat P\boxtimes_d\widehat Q)(x)
 :=\sum_{j=0}^d(-1)^{d-j}\binom dj a_jb_jx^j.
\]
This is the symmetric finite free multiplicative convolution, written in the
binomial normalization corresponding to Marcus--Spielman--Srivastava
\cite{MarcusSpielmanSrivastavaFiniteFree}.
Then the coefficient definitions give
\begin{equation}
 \mathscr R_d(P*_dQ)=(\mathscr R_dP)\boxtimes_d(\mathscr R_dQ).
 \label{eq:SS-finite-free}
\end{equation}
For $c\in\mathbb N_0$ and $\rho\in\C$,
\begin{equation}
 \mathscr R_d B_d[(s-\rho)^c](x)
 =L_d^*\!\left(x;-\frac d2-\rho,c\right),
 \label{eq:multiplicative-Laguerre-identification}
\end{equation}
where
\[
 L_d^*(x;b,c)=(x\partial_x+b)^c(x-1)^d
 =\sum_{j=0}^d(-1)^{d-j}\binom dj(j+b)^cx^j
\]
is Kabluchko's multiplicative Laguerre polynomial~\cite{KabluchkoMultiplicative}.  More precisely, for $F(s)=(s-\rho)^c$ the multiplier $M_{F,d}$ is the Euler operator $(z\partial_z-d/2-\rho)^c$.  When $\rho\in i\R$, its unit-circle preservation is exactly the unitary Euler-multiplier case of~\cite[Proposition~3.8]{KabluchkoMultiplicative}.  The paired construction used here begins with a non-fixed reflection pair: the two
individual Euler factors need not preserve unitary zeros, whereas their product does
so precisely on $\Omega_d$.  Theorem~\ref{thm:finite-orbit} then permits arbitrary
products of fixed and two-point reflection orbits.
\end{remark}

\begin{corollary}[Exact paired Euler-multiplier criterion]
\label{cor:paired-euler}
Let $D_z=z\,d/dz$ and $\beta\in\C$.  The operator
\[
 \mathcal E_{\beta,d}:=(D_z+\beta)(D_z-d-\bar\beta)
\]
maps $\mathcal C_d$ into itself if and only if
\begin{equation}
 \left(\re\beta+\frac d2\right)^2
 -\frac{(\im\beta)^2}{d-1}\le\frac d4.
 \label{eq:paired-euler-region}
\end{equation}
On the line $\re\beta=-d/2$ this contains the repeated unitary Euler multiplier.
Off that line neither first-order factor preserves $\mathcal C_d$ by itself, while
their paired product does so exactly in the region~\eqref{eq:paired-euler-region}.
\end{corollary}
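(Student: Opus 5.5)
The plan is to recognise $\mathcal E_{\beta,d}$ as a sampling multiplier $M_{F,d}$ whose source $F$ is a single reflection orbit, and then read off the answer from Proposition~\ref{prop:multiplier-criterion} and Proposition~\ref{prop:two-orbit}. Since $D_z z^j=jz^j$, the operator $\mathcal E_{\beta,d}$ acts diagonally in the binomial basis with multipliers $\lambda_j=(j+\beta)(j-d-\bar\beta)$. Substituting $s=j-d/2$ gives
\[
 \lambda_j=\Bigl(s+\tfrac d2+\beta\Bigr)\Bigl(s-\tfrac d2-\bar\beta\Bigr).
\]
Put $\rho:=-\tfrac d2-\beta$. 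Then $-\bar\rho=\tfrac d2+\bar\beta$, so $\lambda_j=q_\rho(j-d/2)$ with $q_\rho$ as in~\eqref{eq:q-rho}. Hence $\mathcal E_{\beta,d}=M_{q_\rho,d}$, and by~\eqref{eq:MFd-conv} its symbol is $Q_\lambda=B_d[q_\rho]$.

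By Proposition~\ref{prop:multiplier-criterion}, $\mathcal E_{\beta,d}(\mathcal C_d)\subseteq\mathcal C_d$ holds if and only if $B_d[q_\rho]\in\mathcal C_d$. Write $\rho=a+ib$, so that $a=-(\re\beta+d/2)$ and $b=-\im\beta$. The condition $\rho\in\Omega_d$ then becomes exactly~\eqref{eq:paired-euler-region}, since only $a^2$ and $b^2$ enter. I split into two cases.
\begin{itemize}
\item If $\re\beta\ne-d/2$, then $a\ne0$ and the orbit $\{\rho,-\bar\rho\}$ is non-fixed. Proposition~\ref{prop:two-orbit} gives $B_d[q_\rho]\in\mathcal C_d$ if and only if $\rho\in\Omega_d$. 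This includes the degenerate case $A_\rho=0$, which that proposition already handles.
\item If $\re\beta=-d/2$, then $\rho=ib$ is a fixed point and $q_\rho=(s-ib)^2$. Lemma~\ref{lem:multiplicativity} gives $B_d[q_\rho]=B_d[s-ib]*_dB_d[s-ib]$. Each factor lies in $\mathcal C_d$ by Lemma~\ref{lem:fixed-orbit}, so Theorem~\ref{thm:GraceSzego} with $r_1=r_2=1$ puts the product in $\mathcal C_d$. Inequality~\eqref{eq:paired-euler-region} holds trivially in this case, since its left side is $-(\im\beta)^2/(d-1)\le0$.
\end{itemize}
Finally, on the line $\re\beta=-d/2$ the operator $\mathcal E_{\beta,d}$ is $(D_z+\beta)^2$, because $-d-\bar\beta=\beta$ there. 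This is the repeated unitary Euler multiplier.

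For the statement about the individual factors, $D_z+\beta$ is the multiplier $M_{s-\rho,d}$. The computation in the proof of Lemma~\ref{lem:fixed-orbit} works for any complex $\rho$ and gives
\[
 B_d[s-\rho]=(1+z)^{d-1}\bigl[(\tfrac d2-\rho)z-(\tfrac d2+\rho)\bigr].
\]
When $\tfrac d2-\rho\ne0$ this has degree $d$, and its extra zero is unimodular if and only if $|\tfrac d2+\rho|=|\tfrac d2-\rho|$, that is, $\re\rho=0$. When $\tfrac d2-\rho=0$ the degree drops, so the symbol is not in $\mathcal C_d$. By Proposition~\ref{prop:multiplier-criterion}, $D_z+\beta$ therefore preserves $\mathcal C_d$ only when $\re\beta=-d/2$. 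The same argument applied to $D_z-d-\bar\beta=M_{s+\bar\rho,d}$ requires $\re\bar\rho=0$, which is the same condition. So off the line neither factor preserves $\mathcal C_d$, while their product does so exactly on~\eqref{eq:paired-euler-region}.

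The only delicate point is bookkeeping. One must check that the change of variables $\rho=-d/2-\beta$ sends the second root to $-\bar\rho$ and not to $\bar\rho$, and that the degree-$d$ requirement in $\mathcal C_d$ is respected in the degenerate cases ($A_\rho=0$ or $\rho=\pm d/2$). Both checks are already built into Propositions~\ref{prop:multiplier-criterion} and~\ref{prop:two-orbit}.
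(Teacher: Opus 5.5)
Your proof is correct and follows the paper's route. The substitution $\rho=-d/2-\beta$ identifies $\mathcal E_{\beta,d}$ with $M_{q_\rho,d}$, and then Proposition~\ref{prop:multiplier-criterion}, Proposition~\ref{prop:two-orbit}, and the linear sample $B_d[s-\rho]$ give both assertions. Your separate treatment of the fixed case $\re\beta=-d/2$, via Lemma~\ref{lem:fixed-orbit} and Grace--Szeg\H{o}, fills in a step the paper's proof leaves implicit, since Proposition~\ref{prop:two-orbit} is stated only for non-fixed orbits.
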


\begin{proof}
Set $\rho=-d/2-\beta$.  Then
\[
 M_{(s-\rho)(s+\bar\rho),d}
 =(D_z+\beta)(D_z-d-\bar\beta).
\]
Proposition~\ref{prop:multiplier-criterion} and the exact two-orbit criterion give
preservation precisely when $\rho\in\Omega_d$, which is
\eqref{eq:paired-euler-region}.  For a single factor $D_z+\beta$, the sampled
symbol has its nontrivial zero on $\T$ exactly when $\re\beta=-d/2$; the same is
true for the reflected factor.
\end{proof}

\subsection{Products, universal maximality, and the sharp strip}

\begin{theorem}[Exact finite orbit-domain theorem]
\label{thm:finite-orbit}
Let $p\not\equiv0$ be a polynomial such that
\[
 p=\omega\J p,\qquad |\omega|=1,\qquad \Zset(p)\subseteq\Omega_d.
\]
Then $B_d[p]\in\mathcal C_d$. Equivalently,
\[
 M_{p,d}(\mathcal C_d)\subseteq\mathcal C_d.
\]
\end{theorem}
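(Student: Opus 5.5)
The plan is to factor $p$ into reflection orbits and apply Lemma~\ref{lem:multiplicativity} together with Theorem~\ref{thm:GraceSzego}.

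\textbf{Step 1: factorization.} Since $p=\omega\J p$, the zero multiset of $p$ is invariant under $\rho\mapsto-\bar\rho$, with multiplicities preserved. We can therefore write
\[
 p(s)=c\prod_{\ell}(s-ib_\ell)\prod_{r}q_{\rho_r}(s),
\]
where $b_\ell\in\R$, each non-fixed pair $\{\rho_r,-\bar\rho_r\}$ contributes one factor $q_{\rho_r}$ from \eqref{eq:q-rho}, and $c\ne0$. Each factor is itself balanced. Comparing $p$ with $\omega\J p$ shows $c=\omega\bar c\,(-1)^{\deg p}$, but we only need $c\ne0$.

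\textbf{Step 2: orbit samples.} By Lemma~\ref{lem:fixed-orbit}, each $B_d[s-ib_\ell]$ is of degree $d$ with all zeros on $\T$. Its leading coefficient is $d/2-ib_\ell\ne0$, since $d\ge2$. By Proposition~\ref{prop:two-orbit}, each $B_d[q_{\rho_r}]\in\mathcal C_d$ because $\rho_r\in\Omega_d$. Note also that $\rho_r\in\Omega_d$ excludes the degenerate case $A_\rho=0$, since $b=0$, $|a|=d/2$ violates $a^2\le d/4$ for $d\ge2$. So each $B_d[q_{\rho_r}]$ genuinely has degree $d$. Finally, $B_d[c]=c(1+z)^d\in\mathcal C_d$.

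\textbf{Step 3: composition.} By Lemma~\ref{lem:multiplicativity}, $B_d[p]$ is the $*_d$-composition of these finitely many degree-$d$ polynomials, all of whose zeros lie on $|z|=1$. Iterating Theorem~\ref{thm:GraceSzego}, with $r_1=r_2=1$ and using associativity, shows that every zero of $B_d[p]$ lies on $\T$.

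The step needing care is that the composition keeps exact degree $d$, since Grace--Szeg\H{o} is stated for degree-$d$ inputs. The leading coefficient of $P*_dQ$ is $a_db_d$, the product of the (binomially normalized) leading coefficients, which is nonzero. The same product rule applies to the constant term, which is also nonzero because all zeros lie on $\T$ and so $0$ is not a root. Each intermediate composition is therefore again in $\mathcal C_d$, and induction on the number of factors closes the argument. If $p$ is constant there is nothing beyond $c(1+z)^d$ to check.

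The equivalent multiplier statement $M_{p,d}(\mathcal C_d)\subseteq\mathcal C_d$ then follows immediately from \eqref{eq:MFd-conv} and Proposition~\ref{prop:multiplier-criterion}. I do not expect a real obstacle. The only subtle points are the degree bookkeeping above and the fact that multiplicities are respected in the orbit decomposition: a repeated zero on $i\R$ gives repeated linear factors, and a repeated pair gives repeated $q_\rho$ factors.
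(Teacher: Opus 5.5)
Your proof is correct and follows the paper's argument exactly. You factor $p$ into fixed-orbit linear factors and reflected-pair quadratics $q_\rho$, treat each factor with Lemma~\ref{lem:fixed-orbit} and Proposition~\ref{prop:two-orbit}, combine them via Lemma~\ref{lem:multiplicativity} and Theorem~\ref{thm:GraceSzego}, and deduce the operator form from Proposition~\ref{prop:multiplier-criterion}; your leading-coefficient bookkeeping is the same fact the paper records as $p(d/2)\ne0$ (because $d/2\notin\Omega_d$), and applying it to each intermediate composition is slightly more careful than the paper's one-line remark.
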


\begin{proof}
The zero multiset of $p$ splits into fixed orbits $\{ib\}$ and two-point orbits $\{\rho,-\bar\rho\}$. Up to a nonzero scalar, $p$ is a product of the corresponding linear factors $s-ib$ and quadratic factors $q_\rho$. By Lemma~\ref{lem:fixed-orbit} and Proposition~\ref{prop:two-orbit}, the sampling polynomial of every factor belongs to $\mathcal C_d$. The multiplicative identity~\eqref{eq:Bd-product} and Theorem~\ref{thm:GraceSzego} then show that $B_d[p]\in\mathcal C_d$. Its degree is $d$ because $d/2\notin\Omega_d$, so $p(d/2)\ne0$. The operator statement follows from Proposition~\ref{prop:multiplier-criterion}.
\end{proof}

The domain $\Omega_d$ is not merely sufficient; it is universally maximal for the reflection-paired implication in the following category.

\begin{theorem}[Universal setwise maximality of the orbit region]
\label{thm:maximality}
Let $E\subseteq\C$ be invariant under $\rho\mapsto-\bar\rho$. The following are equivalent:
\begin{enumerate}
\item[(i)] for every nonzero balanced polynomial $p$ with $\Zset(p)\subseteq E$, one has $B_d[p]\in\mathcal C_d$;
\item[(ii)] $E\subseteq\Omega_d$.
\end{enumerate}
\end{theorem}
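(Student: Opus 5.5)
The implication (ii)$\Rightarrow$(i) is exactly Theorem~\ref{thm:finite-orbit}: if $E\subseteq\Omega_d$ and $p$ is a nonzero balanced polynomial with $\Zset(p)\subseteq E$, then $\Zset(p)\subseteq\Omega_d$, so $B_d[p]\in\mathcal C_d$. Nothing new is needed in this direction. The invariance of $E$ plays no role here.

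For (i)$\Rightarrow$(ii) I would argue by contraposition and use a single reflected orbit as the test polynomial. Suppose $\rho=a+ib\in E\setminus\Omega_d$. Since $E$ is invariant under $\rho\mapsto-\bar\rho$, also $-\bar\rho\in E$. There are two cases.
\begin{enumerate}
\item[(a)] If $\rho$ were fixed, i.e.\ $a=0$, then $a^2-b^2/(d-1)=-b^2/(d-1)\le 0<d/4$, so $\rho\in\Omega_d$. Hence every point outside $\Omega_d$ has $a\neq0$, and $\{\rho,-\bar\rho\}$ is a genuine two-point orbit.
\item[(b)] Take $p=q_\rho=(s-\rho)(s+\bar\rho)$. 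Its zeros are $\rho$ and $-\bar\rho$, both in $E$. It is balanced because
\[
 \J q_\rho(s)=\overline{(-\bar s-\rho)(-\bar s+\bar\rho)}=(s+\bar\rho)(s-\rho)=q_\rho(s),
\]
so $\omega=1$. By Proposition~\ref{prop:two-orbit}, (i)$\Leftrightarrow$(iii), the failure $\rho\notin\Omega_d$ gives $B_d[q_\rho]\notin\mathcal C_d$.
\end{enumerate}
This contradicts (i), so $E\subseteq\Omega_d$.

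The only point requiring care is that Proposition~\ref{prop:two-orbit} already covers the degenerate case $A_\rho=0$, that is $\rho=\pm d/2$. There the sample drops degree and so fails to lie in $\mathcal C_d$, because membership in $\mathcal C_d$ requires degree exactly $d$. Since the proposition handles this within its proof, the maximality argument is essentially a two-line consequence of the orbit computation. I expect no genuine obstacle beyond recording the balancedness of $q_\rho$ and checking that points outside $\Omega_d$ are never fixed points.
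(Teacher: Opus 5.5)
Your proposal is correct and follows the paper's proof: (ii)$\Rightarrow$(i) is Theorem~\ref{thm:finite-orbit}, and for the converse you note that fixed points lie in $i\R\subseteq\Omega_d$, then test the balanced quadratic $q_\rho$ against Proposition~\ref{prop:two-orbit}. The contrapositive phrasing, the explicit check that $\J q_\rho=q_\rho$, and the remark on the degree-drop case $\rho=\pm d/2$ only spell out details the paper leaves implicit.
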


\begin{proof}
The implication (ii)$\Rightarrow$(i) is Theorem~\ref{thm:finite-orbit}. Conversely, let $\rho\in E$. If $\rho$ is fixed, then $\rho\in i\R\subseteq\Omega_d$. Otherwise reflection invariance gives $-\bar\rho\in E$, and the balanced quadratic $q_\rho$ has both zeros in $E$. Assumption (i) and Proposition~\ref{prop:two-orbit} imply $\rho\in\Omega_d$.
\end{proof}

\begin{remark}[Scope of maximality]
The preceding theorem is a universal setwise statement. It does not assert that membership of every zero in $\Omega_d$ is necessary for a particular balanced polynomial to have a unit-circle-rooted image. For a specially chosen polynomial, contributions from distinct reflection orbits may cancel. What is proved is that no reflection-invariant set strictly larger than $\Omega_d$ can support the implication uniformly for all nonzero balanced polynomials with zeros in that set.
\end{remark}

The strip theorem is now a geometric corollary.

\begin{corollary}[Sharp finite strip theorem]
\label{cor:finite-strip}
For $d\ge2$ and $h\ge0$, the following are equivalent:
\begin{enumerate}
\item[(i)] every nonzero balanced polynomial $p$ with $\Zset(p)\subseteq S_h$ satisfies $B_d[p]\in\mathcal C_d$;
\item[(ii)] $h\le\sqrt d/2$.
\end{enumerate}
\end{corollary}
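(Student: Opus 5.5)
The corollary will follow from Theorem~\ref{thm:maximality} once the strip is compared with the region $\Omega_d$. The strip $S_h$ is invariant under $\rho\mapsto-\bar\rho$, since reflection preserves $|\re\rho|$. So I will apply the universal maximality theorem with $E=S_h$. By that theorem, (i) holds if and only if $S_h\subseteq\Omega_d$, and it remains to decide when this inclusion holds.

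For (ii)$\Rightarrow$(i), suppose $h\le\sqrt d/2$ and let $\rho=a+ib\in S_h$. Then
\[
 a^2-\frac{b^2}{d-1}\le a^2\le h^2\le\frac d4,
\]
so $\rho\in\Omega_d$. Hence $S_h\subseteq\Omega_d$, and Theorem~\ref{thm:finite-orbit} (equivalently, the implication (ii)$\Rightarrow$(i) of Theorem~\ref{thm:maximality}) gives (i).

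For (i)$\Rightarrow$(ii), suppose $h>\sqrt d/2$ and pick $a$ with $\sqrt d/2<a\le h$. The point $\rho=a$, with $b=0$, lies in $S_h$ but not in $\Omega_d$, because $a^2>d/4$. Theorem~\ref{thm:maximality} therefore says (i) fails. Concretely, the failure is witnessed by the balanced quadratic $q_a(s)=s^2-a^2$, whose zeros $\pm a$ lie in $S_h$. By Proposition~\ref{prop:two-orbit}, $B_d[q_a]\notin\mathcal C_d$. This is either because the quadratic factor has a pair of non-unimodular zeros (the case $|B_\rho|>2|A_\rho|$), or, when $a=d/2$, because the degree drops. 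Note that $d/2\ge\sqrt d/2$, so the case $a=d/2$ can occur; the degree drop is already covered by Proposition~\ref{prop:two-orbit}.

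The main point to check is the choice of witness. Taking $b=0$ is the best choice, since on the real axis the region $\Omega_d$ is widest only to $\sqrt d/2$. There is no real obstacle beyond this bookkeeping: both directions reduce immediately to the earlier results once $S_h$ is compared with $\Omega_d$ along the real axis.
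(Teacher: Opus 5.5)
Your proposal is correct and is essentially the paper's proof: you apply Theorem~\ref{thm:maximality} to the reflection-invariant set $E=S_h$, observe that $S_h\subseteq\Omega_d$ exactly when $h\le\sqrt d/2$ because the defining inequality of $\Omega_d$ is most restrictive at $b=0$, and exhibit the witness $s^2-a^2$ with $\sqrt d/2<a\le h$. Your remark on the degree-drop case $a=d/2$ is a harmless extra check already covered by Proposition~\ref{prop:two-orbit}; one wording fix is that $\Omega_d$ is \emph{narrowest}, not widest, along the real axis, which is exactly why $b=0$ gives the witness.
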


\begin{proof}
The inclusion $S_h\subseteq\Omega_d$ holds exactly when $h^2\le d/4$, since the left side of~\eqref{eq:Omega} is largest, for fixed $a$, at $b=0$. Apply Theorem~\ref{thm:maximality}. If $h>\sqrt d/2$, an explicit counterexample is
$p(s)=s^2-a^2$ with $\sqrt d/2<a\le h$.
\end{proof}

\begin{remark}
When $h=0$, the zeros lie on the imaginary axis.  After translating by $d/2$, this reduces to a classical Laguerre--Schur composition setting; see~\cite[Chapter~5]{RahmanSchmeisser}.  The passage to positive width is possible because a reflected pair is treated as one quadratic orbit rather than through separate one-zero annular bounds.
\end{remark}

\begin{remark}[Classical normalization and the square-root scale]
\label{rem:classical-normalization}
With $g(x)=p(x-d/2)$ one has
\[
 B_d[p](z)=\sum_{j=0}^d\binom dj g(j)z^j=g(D_z)(1+z)^d,
 \qquad D_z=z\frac d{dz},
\]
so the finite theorem belongs to the classical Laguerre--Schur composition setting.  De Bruijn's strip-contraction theorem~\cite[Theorem~5]{deBruijnTrig} exhibits the same threshold $\sqrt d/2$ for a polynomial formed from imaginary translates.  Its target and variables are different, however: it does not contain the exact reflected-pair region~\eqref{eq:Omega} or the universal setwise maximality of Theorem~\ref{thm:maximality}.
\end{remark}

\section{Entire-function extension}

The finite theorem does not yet apply to completed $L$-functions, which are entire rather than polynomial.  We therefore need polynomial approximants that retain the two features used in Section~2: the location of the zeros and the exact balance phase.  Ordinary Taylor truncations need not preserve either feature, so we work instead with canonical products adapted to the reflection symmetry.

\subsection{Balance-preserving canonical-product approximation}

We first formulate the approximation theorem for an arbitrary reflection-invariant zero region.

\begin{theorem}[Balanced approximation in a reflection-invariant set]
\label{thm:balanced-approx}
Let $E\subseteq\C$ be closed, invariant under $\rho\mapsto-\bar\rho$, and suppose that $i\R\subseteq E$. Let $F\not\equiv0$ be an entire function of order at most one satisfying
\[
 \Zset(F)\subseteq E,
 \qquad F=\omega\J F,
 \qquad |\omega|=1.
\]
Then there are polynomials $p_N$ such that
\begin{align}
 p_N&\longrightarrow F &&\text{locally uniformly on $\C$},
 \label{eq:approx-conv}\\
 \Zset(p_N)&\subseteq E,
 \label{eq:approx-zero}\\
 p_N&=\omega\J p_N.
 \label{eq:approx-phase}
\end{align}
\end{theorem}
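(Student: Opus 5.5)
The plan is to read the balance relation off the Hadamard factorization of $F$, and then truncate the factorization in a way that keeps every factor separately invariant under $\J$. The first step is to observe that $F=\omega\J F$ makes $\Zset(F)$, with multiplicities, invariant under $\rho\mapsto-\bar\rho$. Indeed, $\J F$ vanishes at $s$ to the same order as $F$ vanishes at $-\bar s$. Since $F$ has order at most one, Hadamard's theorem gives
\[
 F(s)=C\,s^{m}e^{Bs}\prod_{\rho\ne0}\Bigl(1-\frac s\rho\Bigr)e^{s/\rho}.
\]
I will regroup this product into orbit factors. A fixed zero $\rho=ib$ gives $\bigl(1-\frac{s}{ib}\bigr)e^{s/(ib)}$. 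A non-fixed pair $\{\rho,\rho^*\}$ with $\rho^*=-\bar\rho$ gives
\[
 \Bigl(1-\frac s\rho\Bigr)\Bigl(1-\frac s{\rho^*}\Bigr)e^{c_\rho s},
 \qquad c_\rho=\frac1\rho+\frac1{\rho^*}=-\frac{2ib}{|\rho|^2}\in i\R .
\]
The regrouping is legitimate because the genus-one product converges absolutely, using $\sum|\rho|^{-2}<\infty$.

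The second step is to check that each factor is $\J$-invariant. For the pair quadratic, $\J\bigl[(1-s/\rho)(1-s/\rho^*)\bigr]=(1+s/\bar\rho)(1+s/\overline{\rho^*})$. Since $\overline{\rho^*}=-\rho$ and $-\bar\rho=\rho^*$, this is the same quadratic. For $c\in i\R$ one has $\J e^{cs}=e^{-\bar c s}=e^{cs}$. The fixed-orbit factors are invariant in the same way, and $\J s^m=(-1)^ms^m$.

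It follows that $G:=F/(Cs^me^{Bs})$ satisfies $\J G=G$. The balance relation then reduces to
\[
 Ce^{Bs}=(-1)^m\omega\,\bar Ce^{-\bar Bs}\quad\text{for all }s .
\]
This forces $B\in i\R$ and $C=(-1)^m\omega\bar C$. Write $B=i\beta$ with $\beta\in\R$.

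The third step is the construction of $p_N$. I truncate symmetrically, keeping all orbit factors with $|\rho|\le R_N$, where $R_N\to\infty$. I collect all the exponentials from the kept factors together with $e^{Bs}$ into a single $e^{i\beta_Ns}$. Here $\beta_N$ is real and converges to the real number $\beta_\infty=\beta-i\sum c_\rho$, where the sum converges because the canonical product converges. I then replace this exponential by the polynomial
\[
 \Bigl(1+\frac{i\beta_N s}{n_N}\Bigr)^{n_N},
\]
with $n_N\to\infty$. Its only zero is $s=in_N/\beta_N\in i\R\subseteq E$ (there is no zero if $\beta_N=0$). It is $\J$-invariant, because $\overline{i\beta_N(-\bar s)}=i\beta_N s$. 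Define $p_N$ as $Cs^m$ times this polynomial times the polynomial parts of the kept orbit factors. Then $\Zset(p_N)\subseteq E$, and the same constant bookkeeping as above gives $p_N=\omega\J p_N$ exactly, which is~\eqref{eq:approx-zero}--\eqref{eq:approx-phase}.

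For~\eqref{eq:approx-conv}, fix a compact set. The truncated canonical product converges uniformly to the full product there. Moreover $(1+i\beta_Ns/n)^{n}e^{-i\beta_Ns}\to1$ uniformly as $n\to\infty$, since $\beta_N$ stays bounded. Choosing $n_N$ large enough relative to $N$ then gives the local uniform convergence $p_N\to F$.

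The main point requiring care is the second step: keeping the balance phase exactly equal to $\omega$ rather than only approximately. This is why the product must be regrouped into reflection orbits before truncating. It is also why one must verify that the residual exponential has a purely imaginary rate, so that it can be approximated by $\J$-invariant polynomials whose zeros lie on $i\R$. A naive Taylor truncation fails on both counts.
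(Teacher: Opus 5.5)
Your overall route matches the paper's. You regroup the genus-one Hadamard product into reflection orbits, show that the residual exponential has purely imaginary rate and that the constant carries the phase, truncate orbitwise, and replace the collected exponential $e^{i\beta_Ns}$ by $(1+i\beta_Ns/n_N)^{n_N}$, whose only zero lies on $i\R$. Tracking $C=(-1)^m\omega\bar C$, rather than first rotating to a $\J$-invariant $cF$ as the paper does, is an equivalent bookkeeping choice.

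There is, however, one false step: the claim that $\beta_N$ converges ``because the canonical product converges.'' Convergence of the genus-one product needs only $\sum|\rho|^{-2}<\infty$, whereas
\[
 \beta_N=\beta-\sum_{0<|\rho|\le R_N}\frac{\im\rho}{|\rho|^2}
\]
can diverge when the imaginary parts of the zeros have one sign. For example, $F(s)=1/\Gamma(is)$ is entire of order one, satisfies $\J F=F$, and has zeros $0,i,2i,\ldots$ in the admissible set $E=i\R$. The Weierstrass product gives $F(s)=is\,e^{i\gamma s}\prod_{n\ge1}(1+is/n)e^{-is/n}$, so $\beta_N=\gamma-\sum_{n\le R_N}1/n\to-\infty$. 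This divergence is essential, not an artifact of truncation: $\prod_{n\le N}(1+is/n)$ itself behaves like $N^{is}/\Gamma(1+is)$, and the divergent exponential is what compensates. Such $F$ have order one but infinite type, which the hypothesis allows.

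Consequently your convergence step does not go through as written, because it uses ``$\beta_N$ stays bounded'' to get $(1+i\beta_Ns/n)^ne^{-i\beta_Ns}\to1$ uniformly in $N$. The repair is to let $n_N$ depend on $\beta_N$. For each fixed real $\beta$ the convergence is locally uniform in $s$, so you can choose $n_N$ with $\sup_{|s|\le N}\bigl|(1+i\beta_Ns/n_N)^{n_N}e^{-i\beta_Ns}-1\bigr|<1/N$. Let $F_N$ be the orbitwise truncation of the Hadamard product, exponentials included. Then $p_N=F_N\cdot(1+i\beta_Ns/n_N)^{n_N}e^{-i\beta_Ns}$. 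Since $F_N\to F$ locally uniformly, the $F_N$ are uniformly bounded on each compact set, and $|p_N-F_N|\le|F_N|/N$ on $|s|\le N$ gives \eqref{eq:approx-conv}.

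This diagonal choice is exactly the paper's \eqref{eq:diagonal-choice}, where the paper stresses that no convergence of the real sequence $(t_N)$ is asserted or needed. With this change, the rest of your argument is correct, including the zero location $\Zset(p_N)\subseteq E$ and the exact phase $p_N=\omega\J p_N$.
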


\begin{proof}
Write $\omega=e^{i\theta}$ and choose $c=e^{-i\theta/2}$, so that
$\bar c=\omega c$.  Then $G:=cF$ satisfies
\begin{equation}
 \J G=G.
 \label{eq:J-G}
\end{equation}
It is therefore enough to construct $\J$-invariant polynomial approximants to $G$;
we will multiply them by $c^{-1}$ at the end.

Let $r=\ord_0G$.  Hadamard factorization for an entire function of order at most
one (see, for example, Levin~\cite[Chapter~I]{Levin}) gives
\begin{equation}
 G(s)=C_0e^{\alpha s}s^r\prod_{\rho\ne0}E_1(s/\rho),
 \qquad E_1(w):=(1-w)e^w,
 \label{eq:Hadamard}
\end{equation}
where the zeros are listed with multiplicity and
$\sum_{\rho\ne0}|\rho|^{-2}<\infty$.  We spell out why the factors may be
regrouped according to the reflection.  If $K\subset\C$ is compact, then for all
sufficiently large $|\rho|$ one may choose the principal logarithm near $1$ and
has
\[
 \sup_{s\in K}|\log E_1(s/\rho)|\le C_K|\rho|^{-2}.
\]
Thus the logarithmic tail is absolutely and uniformly convergent on $K$.  After
separating finitely many initial zeros, every permutation of the remaining factors,
and in particular every grouping into finite reflection orbits, has the same locally
uniform product.

The zero multiset is invariant under $\rho\mapsto-\bar\rho$.  Enumerate its
nonzero reflection orbits by $\mathcal O_1,\mathcal O_2,\ldots$.  If there are
only finitely many nonzero orbits, extend the list by empty orbits.  With the empty
product interpreted as $1$, set
\[
 \Phi_\nu(s):=\prod_{\rho\in\mathcal O_\nu}E_1(s/\rho).
\]
Because $E_1$ has real coefficients,
\[
 \J\bigl(E_1(s/\rho)\bigr)=E_1\bigl(s/(-\bar\rho)\bigr),
\]
so each $\Phi_\nu$ is $\J$-invariant.  The factor $is$ is also
$\J$-invariant.  Absorbing $i^{-r}$ into the constant, we may write
\begin{equation}
 G(s)=Ce^{\alpha s}(is)^r\prod_{\nu=1}^{\infty}\Phi_\nu(s).
 \label{eq:orbit-product}
\end{equation}
The product is locally uniformly convergent by the preceding normal-convergence
argument.  The finite-orbit and zero-free cases are covered by the empty-orbit
convention above; in the zero-free case this simply means $r=0$ and every
$\Phi_\nu$ is the empty product $1$.

Put
\[
 P(s):=(is)^r\prod_{\nu\ge1}\Phi_\nu(s).
\]
Then $\J P=P$.  On the open set on which $P$ does not vanish, the identity
$\J G=G$ and~\eqref{eq:orbit-product} give
\[
 Ce^{\alpha s}=\bar C e^{-\bar\alpha s}.
\]
Both sides are entire, so the identity holds on all of $\C$.  Evaluating it at
$s=0$ and then differentiating at $s=0$ shows that
$C=\bar C\ne0$ and $\alpha=-\bar\alpha$.  Hence $C\in\R\setminus\{0\}$ and
$\alpha=i\tau$ for some $\tau\in\R$, and
\begin{equation}
 G(s)=Ce^{i\tau s}(is)^r\prod_{\nu=1}^{\infty}\Phi_\nu(s).
 \label{eq:G-final-factorization}
\end{equation}

For $N\ge1$, define
\begin{equation}
 R_N(s):=(is)^r
 \prod_{\nu=1}^N\prod_{\rho\in\mathcal O_\nu}
 \left(1-\frac s\rho\right).
 \label{eq:RN}
\end{equation}
Each orbitwise linear product is $\J$-invariant, so $\J R_N=R_N$; all zeros of
$R_N$ are zeros of $G$ and therefore belong to $E$.  Moreover,
\[
 (is)^r\prod_{\nu=1}^N\Phi_\nu(s)
 =R_N(s)e^{\sigma_Ns},
 \qquad
 \sigma_N:=\sum_{\nu=1}^N\sum_{\rho\in\mathcal O_\nu}\frac1\rho.
\]
Every orbit contributes a purely imaginary number.  For a two-point orbit,
\[
 \frac1\rho+\frac1{-\bar\rho}
 =\frac1\rho-\frac1{\bar\rho}\in i\R,
\]
and for a fixed point $\rho=ib$, one has $1/\rho\in i\R$.  Thus
$\sigma_N=i\eta_N$ with $\eta_N\in\R$.  Set
\[
 t_N:=\tau+\eta_N,
 \qquad
 G_N(s):=CR_N(s)e^{it_Ns}.
\]
This is exactly the $N$th orbitwise truncation of~\eqref{eq:G-final-factorization}:
\[
 G_N(s)=Ce^{i\tau s}(is)^r\prod_{\nu=1}^N\Phi_\nu(s).
\]
Consequently $G_N\to G$ locally uniformly.  Notice that no convergence of the
real sequence $(t_N)$ is asserted or needed.

For each fixed real $t$,
\begin{equation}
 \left(1+\frac{its}{M}\right)^M\longrightarrow e^{its}
 \label{eq:exp-approx}
\end{equation}
locally uniformly as $M\to\infty$.  We use this separately for each value $t_N$.
Let
\[
 A_N:=\max_{|s|\le N}|CR_N(s)|
\]
and choose $M_N$ so large that
\begin{equation}
 \max_{|s|\le N}
 \left|\left(1+\frac{it_Ns}{M_N}\right)^{M_N}-e^{it_Ns}\right|
 <\frac1{N(1+A_N)}.
 \label{eq:diagonal-choice}
\end{equation}
When $t_N=0$, the parenthesized factor is interpreted as $1$.  Define
\begin{equation}
 q_N(s):=CR_N(s)
 \left(1+\frac{it_Ns}{M_N}\right)^{M_N}.
 \label{eq:qN}
\end{equation}
If $t_N\ne0$, the only additional zero is $iM_N/t_N$, with multiplicity $M_N$;
it lies on $i\R\subseteq E$.  The extra factor is $\J$-invariant, and hence
$\J q_N=q_N$.  By~\eqref{eq:diagonal-choice},
\[
 \max_{|s|\le N}|q_N(s)-G_N(s)|<\frac1N.
\]
For a fixed compact set $K$, this bound applies for all sufficiently large $N$;
combined with $G_N\to G$, it proves $q_N\to G$ locally uniformly.

Finally set $p_N=c^{-1}q_N$.  Then $p_N\to F$, its zero set is unchanged, and
\[
 \omega\J p_N
 =\omega\bar c^{-1}\J q_N
 =c^{-1}q_N=p_N,
\]
because $\bar c=\omega c$.
\end{proof}

\begin{remark}[Relation to classical strip approximation]
For a closed strip, polynomial approximation preserving the strip of zeros already
appears in de Bruijn's work~\cite{deBruijnTrig}.  The point of
Theorem~\ref{thm:balanced-approx} is not the existence of strip-preserving
approximants alone: it treats an arbitrary reflection-invariant set and simultaneously
preserves the exact anti-linear phase $F=\omega\J F$.  The orbitwise regrouping and
the diagonal approximation of the residual factor $e^{it_Ns}$ are included to make
those two additional requirements explicit.
\end{remark}

\begin{corollary}[Differentiation in convex regions]
\label{cor:differentiation}
Assume in addition that $E$ is convex. For every $m\ge0$ with $F^{(m)}\not\equiv0$, there are polynomials $q_N$ such that
\[
 q_N\to F^{(m)}\quad\text{locally uniformly},
 \qquad \Zset(q_N)\subseteq E,
 \qquad q_N=(-1)^m\omega\J q_N.
\]
In particular, $\Zset(F^{(m)})\subseteq E$.
\end{corollary}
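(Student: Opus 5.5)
The natural route is to differentiate the approximants from Theorem~\ref{thm:balanced-approx} and control their zeros with the Gauss--Lucas theorem. So I first apply Theorem~\ref{thm:balanced-approx} to $F$ and obtain polynomials $p_N\to F$ locally uniformly, with $\Zset(p_N)\subseteq E$ and $p_N=\omega\J p_N$. Then I set $q_N:=p_N^{(m)}$.

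\emph{Convergence.} Locally uniform convergence of holomorphic functions passes to derivatives by the Cauchy estimates, so $q_N\to F^{(m)}$ locally uniformly.

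\emph{Phase.} Differentiating $(\J g)(s)=\overline{g(-\bar s)}$ gives $(\J g)'=-\J(g')$, since the inner map $s\mapsto-\bar s$ contributes a sign. Iterating $m$ times yields $(\J g)^{(m)}=(-1)^m\J(g^{(m)})$. Applying this to $p_N=\omega\J p_N$ gives $q_N=(-1)^m\omega\J q_N$.

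\emph{Zero location.} By the Gauss--Lucas theorem, the zeros of $p_N'$ lie in the convex hull of $\Zset(p_N)$, which is contained in $E$ because $E$ is convex. Induction on the order of the derivative then gives $\Zset(q_N)\subseteq E$.

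One degenerate case needs attention. If $q_N$ is a nonzero constant, its zero set is empty and the claim is trivial. Since $F^{(m)}\not\equiv0$, the locally uniform limit forces $q_N\not\equiv0$ for all large $N$, so we may discard finitely many indices.

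\emph{The final claim.} The remaining step is to show $\Zset(F^{(m)})\subseteq E$. I would use Hurwitz's theorem: since $q_N\to F^{(m)}\not\equiv0$ locally uniformly, every zero of $F^{(m)}$ is a limit of zeros of the $q_N$. These zeros lie in $E$, and $E$ is closed, so every zero of $F^{(m)}$ lies in $E$.

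I expect no serious obstacle in this argument. The only points needing care are the sign bookkeeping in $(\J g)'=-\J(g')$ and the nonvanishing hypothesis required to apply Hurwitz's theorem.
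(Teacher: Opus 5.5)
Your proposal is correct and follows the paper's proof essentially step for step. You differentiate the balanced approximants from Theorem~\ref{thm:balanced-approx}, pass convergence to derivatives by Cauchy estimates, and transfer the phase via $D^m\J=(-1)^m\J D^m$. You then locate zeros by Gauss--Lucas using convexity of $E$, discard the finitely many identically vanishing derivatives, and conclude the final claim by Hurwitz's theorem together with the closedness of $E$.
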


\begin{proof}
By Cauchy's integral formula, locally uniform convergence is preserved under every
fixed number of derivatives.  Thus $p_N^{(m)}\to F^{(m)}$ locally uniformly.  Since
$F^{(m)}\not\equiv0$, the derivatives $p_N^{(m)}$ are nonzero for all sufficiently
large $N$; discard the remaining finitely many terms and put $q_N=p_N^{(m)}$.
The identity
\[
 D^m\J=(-1)^m\J D^m
\]
gives $q_N=(-1)^m\omega\J q_N$.  By Gauss--Lucas, every zero of
$p_N^{(m)}$ lies in the convex hull of the zeros of $p_N$, which is contained in
$E$ because $E$ is convex.

To obtain the final assertion, let $K$ be a closed disk disjoint from $E$.  Each
$q_N$ is zero-free on $K$.  Hurwitz's theorem says that the locally uniform limit is
either zero-free in the interior of $K$ or identically zero there.  The second
alternative would force $F^{(m)}\equiv0$ on $\C$, contrary to the hypothesis.
Hence $F^{(m)}$ has no zero outside $E$.
\end{proof}

\subsection{Exact and strip sampling}

\begin{theorem}[Exact entire orbit-domain theorem]
\label{thm:entire-orbit}
Let $d\ge2$, and let $F\not\equiv0$ be entire of order at most one and suppose
\[
 F=\omega\J F,
 \qquad |\omega|=1,
 \qquad \Zset(F)\subseteq\Omega_d.
\]
Then
\[
 B_d[F]\in\mathcal C_d,
 \qquad M_{F,d}(\mathcal C_d)\subseteq\mathcal C_d.
\]
\end{theorem}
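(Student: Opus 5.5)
The plan is to approximate $F$ by balanced polynomials via Theorem~\ref{thm:balanced-approx}, apply the finite theorem to each approximant, and pass to the limit with Hurwitz's theorem. The set $E=\Omega_d$ satisfies the hypotheses of Theorem~\ref{thm:balanced-approx}: it is closed, it is invariant under $\rho\mapsto-\bar\rho$ (the defining inequality involves only $a^2$ and $b^2$), and it contains $i\R$. That theorem therefore yields polynomials $p_N\to F$ locally uniformly, with $\Zset(p_N)\subseteq\Omega_d$ and $p_N=\omega\J p_N$. Discarding finitely many terms, we may assume $p_N\not\equiv0$. Theorem~\ref{thm:finite-orbit} then gives $B_d[p_N]\in\mathcal C_d$ for every $N$.

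Next, $B_d[p_N]\to B_d[F]$ coefficientwise, since each coefficient is a value at one of the finitely many points $j-d/2$. Hence the convergence is locally uniform on $\C$ as polynomials of degree at most $d$. To apply Hurwitz's theorem, I must show that $B_d[F]$ has exact degree $d$, that is, $F(d/2)\ne0$. This holds because $d/2\notin\Omega_d$: indeed $d^2/4>d/4$ for $d\ge2$, so $d/2$ is not a zero of $F$. By balancedness, $F(-d/2)=\omega\overline{F(d/2)}\ne0$ as well. The degree-$d$ polynomials $B_d[p_N]$ then converge to the degree-$d$ polynomial $B_d[F]$. Hurwitz's theorem, applied to a closed disk disjoint from $\T$, shows that any zero of $B_d[F]$ off $\T$ would eventually be matched by a zero of $B_d[p_N]$ off $\T$, which is a contradiction. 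Equivalently, the roots of polynomials of fixed degree depend continuously on the coefficients, and $\T$ is closed. So $B_d[F]\in\mathcal C_d$.

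The operator statement $M_{F,d}(\mathcal C_d)\subseteq\mathcal C_d$ follows from \eqref{eq:MFd-conv} together with Proposition~\ref{prop:multiplier-criterion}, applied with $\lambda_j=F(j-d/2)$, whose symbol is exactly $B_d[F]$.

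The only delicate point is the degree nondegeneracy in the limit. Without it, roots could escape to $\infty$ (or to $0$, via the constant coefficient). This is handled by observing that $\pm d/2\notin\Omega_d$, so the leading and constant coefficients $F(\pm d/2)$ are nonzero.
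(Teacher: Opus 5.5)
Your proposal is correct and follows the paper's proof essentially step for step: balanced canonical-product approximation in the closed reflection-invariant set $\Omega_d\supseteq i\R$, the finite orbit theorem for each approximant, coefficientwise convergence with $\pm d/2\notin\Omega_d$ guaranteeing degree $d$, Hurwitz on a disk disjoint from $\T$, and the multiplier criterion for the operator statement. One small correction: the degree check is needed because $\mathcal C_d$ requires exact degree $d$ and Hurwitz requires a limit that is not identically zero, not to stop roots from escaping. Your argument establishes it in any case.
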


\begin{proof}
The set $\Omega_d$ is closed, reflection invariant, and contains $i\R$. Apply Theorem~\ref{thm:balanced-approx} with the closed set $\Omega_d$. For its approximants $p_N$, Theorem~\ref{thm:finite-orbit} gives $B_d[p_N]\in\mathcal C_d$. Since $B_d$ uses only $d+1$ fixed values, $B_d[p_N]\to B_d[F]$ coefficientwise, hence locally uniformly. The endpoints $\pm d/2$ do not lie in $\Omega_d$, so $F(\pm d/2)\ne0$ and the limit has degree $d$. If $B_d[F]$ had an off-circle zero, Hurwitz's theorem on a small disk disjoint from $\T$ would force $B_d[p_N]$ to have a zero there for all large $N$, a contradiction. Thus $B_d[F]\in\mathcal C_d$, and the operator statement follows from Proposition~\ref{prop:multiplier-criterion}.
\end{proof}

The universal region does not become larger if polynomial test functions are excluded.

\begin{theorem}[Sharpness within exact order one]
\label{thm:exact-order-sharpness}
Let $d\ge2$, and let $E\subseteq\C$ be invariant under $\rho\mapsto-\bar\rho$.  The following are
equivalent:
\begin{enumerate}
\item[(i)] for every balanced nonpolynomial entire function $F$ of order exactly one
with $\Zset(F)\subseteq E$, one has $B_d[F]\in\mathcal C_d$;
\item[(ii)] $E\subseteq\Omega_d$.
\end{enumerate}
\end{theorem}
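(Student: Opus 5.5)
The implication (ii)$\Rightarrow$(i) is immediate from Theorem~\ref{thm:entire-orbit}. A balanced entire function of order exactly one is in particular of order at most one, and $\Zset(F)\subseteq E\subseteq\Omega_d$. So the whole content lies in (i)$\Rightarrow$(ii). There we must produce, for each non-fixed $\rho\in E\setminus\Omega_d$, a balanced nonpolynomial order-one test function whose zeros all lie in $E$ and whose sample fails to lie in $\mathcal C_d$. Fixed points $\rho\in i\R$ already lie in $\Omega_d$, so they need no argument.

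The natural candidate is $F(s)=q_\rho(s)e^{i\tau s}$ with $\tau\in\R$ small and nonzero. This function is entire and nonpolynomial, of order exactly one, and has zero set $\{\rho,-\bar\rho\}\subseteq E$ by reflection invariance. It is balanced because $\J(e^{i\tau s})=e^{i\tau s}$ and $\J q_\rho=q_\rho$, so $F=\J F$. By Lemma~\ref{lem:multiplicativity},
\[
 B_d[F]=B_d[q_\rho]*_d B_d[e^{i\tau s}],\qquad B_d[e^{i\tau s}](z)=e^{-i\tau d/2}(1+e^{i\tau}z)^d .
\]
Convolving with $(1+e^{i\tau}z)^d$ in the binomial basis amounts to substituting $z\mapsto e^{i\tau}z$, which is a rotation. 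Hence
\[
 B_d[F](z)=e^{-i\tau d/2}B_d[q_\rho](e^{i\tau}z),
\]
whose zeros are the zeros of $B_d[q_\rho]$ rotated by $e^{-i\tau}$. Rotation preserves membership in $\T$, and it preserves the degree. Therefore $B_d[F]\in\mathcal C_d$ if and only if $B_d[q_\rho]\in\mathcal C_d$. By Proposition~\ref{prop:two-orbit}, that holds if and only if $\rho\in\Omega_d$. So (i) forces $\rho\in\Omega_d$, for every $\rho\in E$.

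The only points needing care are bookkeeping. First, the identity $B_d[e^{i\tau s}]\ast_d P(z)=e^{-i\tau d/2}P(e^{i\tau}z)$ must be checked directly from the coefficients $e^{i\tau(j-d/2)}$. Second, we need that "order exactly one" holds, which is clear since $|F|$ grows like $e^{|\tau||s|}$ times a polynomial. Third, the degenerate case $A_\rho=0$ (that is, $\rho=\pm d/2$) should be handled separately: there $B_d[q_\rho]$ has degree below $d$, so $B_d[F]\notin\mathcal C_d$, and such $\rho\notin\Omega_d$. I expect no genuine obstacle. The main point is noticing that the exponential factor acts as a pure rotation of the sample, so the polynomial counterexample transfers verbatim.
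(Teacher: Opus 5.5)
Your proposal is correct and follows essentially the same route as the paper: the paper also takes $F_{\rho,t}(s)=e^{its}q_\rho(s)$ with $t\in\R\setminus\{0\}$, observes $B_d[F_{\rho,t}](z)=e^{-itd/2}B_d[q_\rho](e^{it}z)$, and concludes from Proposition~\ref{prop:two-orbit}; the paper reads this identity off the definition of $B_d$ rather than deriving it via Lemma~\ref{lem:multiplicativity}. Your separate treatment of the degenerate case $A_\rho=0$ is harmless but redundant, since Proposition~\ref{prop:two-orbit} already covers it.
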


\begin{proof}
If $E\subseteq\Omega_d$, then (i) follows from
Theorem~\ref{thm:entire-orbit}.  Conversely, suppose that
$\rho\in E\setminus\Omega_d$.  Since every fixed point of the reflection lies on
$i\R\subseteq\Omega_d$, the orbit of $\rho$ has two points and
$q_\rho(s)=(s-\rho)(s+\bar\rho)$ is $\J$-invariant.  For any
$t\in\R\setminus\{0\}$, put
\[
 F_{\rho,t}(s):=e^{its}q_\rho(s).
\]
Since $t$ is real, $e^{its}$ is $\J$-invariant, and since $t\ne0$ it has
positive exponential type.  Hence $F_{\rho,t}$ is a nonpolynomial
$\J$-invariant entire function of order exactly one, and its zeros lie in $E$.
Directly from the definition of $B_d$,
\begin{equation}
 B_d[F_{\rho,t}](z)
 =e^{-itd/2}B_d[q_\rho](e^{it}z).
 \label{eq:exponential-rotation}
\end{equation}
By Proposition~\ref{prop:two-orbit}, $B_d[q_\rho]\notin\mathcal C_d$; multiplication
by a nonzero scalar and rotation of the variable cannot repair either an off-circle
zero or a degree drop.  Hence (i) fails.
\end{proof}

For derivatives we use the maximal contained strip, since $\Omega_d$ itself is not convex.

\begin{theorem}[Derivative strip theorem]
\label{thm:derivative-strip}
Let $d\ge2$, $0\le h\le\sqrt d/2$, and let $F\not\equiv0$ be entire of order at most one with
\[
 \Zset(F)\subseteq S_h,
 \qquad F=\omega\J F,
 \qquad |\omega|=1.
\]
Then, for every $m\ge0$ with $F^{(m)}\not\equiv0$,
\begin{equation}
 B_d[F^{(m)}]\in\mathcal C_d,
 \qquad M_{F^{(m)},d}(\mathcal C_d)\subseteq\mathcal C_d.
 \label{eq:derivative-strip-conclusion}
\end{equation}
The half-width $\sqrt d/2$ is optimal for a theorem uniform over this class.
\end{theorem}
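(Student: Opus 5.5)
The plan is to reduce to Theorem~\ref{thm:entire-orbit} applied to $F^{(m)}$ itself. The strip $S_h$ is closed, convex, invariant under $\rho\mapsto-\bar\rho$, and contains $i\R$. Hence Corollary~\ref{cor:differentiation}, with $E=S_h$, applies to $F$.

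Four properties of $F^{(m)}$ are needed, and each comes from an earlier result.
\begin{enumerate}
\item[(a)] Zero location: Corollary~\ref{cor:differentiation} gives $\Zset(F^{(m)})\subseteq S_h$.
\item[(b)] Balance: the identity $D^m\J=(-1)^m\J D^m$ gives $F^{(m)}=(-1)^m\omega\J F^{(m)}$, and $(-1)^m\omega$ is unimodular.
\item[(c)] Order: differentiation does not raise the order of an entire function, so $F^{(m)}$ has order at most one.
\item[(d)] Containment in $\Omega_d$: if $|a|\le h\le\sqrt d/2$, then $a^2-b^2/(d-1)\le a^2\le d/4$, so $S_h\subseteq\Omega_d$. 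This is the inclusion already used in Corollary~\ref{cor:finite-strip}.
\end{enumerate}
Theorem~\ref{thm:entire-orbit}, applied to $F^{(m)}$, then yields $B_d[F^{(m)}]\in\mathcal C_d$. The degree is exactly $d$ because the endpoints $\pm d/2$ lie outside $S_h$: since $d\ge2$, we have $d/2>\sqrt d/2\ge h$. The multiplier statement follows from Proposition~\ref{prop:multiplier-criterion} together with~\eqref{eq:MFd-conv}.

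Alternatively, one can avoid invoking Theorem~\ref{thm:entire-orbit} on $F^{(m)}$ by working with approximants directly. Corollary~\ref{cor:differentiation} supplies polynomials $q_N\to F^{(m)}$ with zeros in $S_h\subseteq\Omega_d$ and the balance phase $(-1)^m\omega$. Theorem~\ref{thm:finite-orbit} gives $B_d[q_N]\in\mathcal C_d$, and Hurwitz's theorem passes this to the limit exactly as in the proof of Theorem~\ref{thm:entire-orbit}. This route sidesteps the order question in (c).

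Optimality is the one step that needs care. The class in the statement includes $m=0$, so enlarging $h$ beyond $\sqrt d/2$ must fail already at $m=0$. For $h>\sqrt d/2$, choose $a$ with $\sqrt d/2<a\le h$ and take $F(s)=s^2-a^2$. This $F$ is balanced, has order $0$, and its zeros $\pm a$ lie in $S_h$. Proposition~\ref{prop:two-orbit}, with $\rho=a$ and $b=0$, shows $B_d[F]\notin\mathcal C_d$, since $(d-1)(d-4a^2)<0$.

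If sharpness is wanted for every fixed derivative order $m$, I would instead take $F$ to be a polynomial whose $m$th derivative is a constant multiple of $s^2-a^2$, namely the balanced polynomial $p(s)=\int\!\cdots\!\int (s^2-a^2)$ with the constants chosen so that $p=\pm\J p$. The difficulty is that $p$ itself must have its zeros in $S_h$, and this is not automatic. Because the statement only requires uniformity over the class, the $m=0$ example suffices, and I would present that one.
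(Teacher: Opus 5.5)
Your proposal is correct and matches the paper's proof: Corollary~\ref{cor:differentiation} applied with the convex strip $S_h$ gives the balance, order, and zero location of $F^{(m)}$, and the inclusion $S_h\subseteq\Omega_d$ lets you invoke Theorem~\ref{thm:entire-orbit}. Sharpness then comes from the quadratic counterexample $s^2-a^2$ with $\sqrt d/2<a\le h$, exactly as in Corollary~\ref{cor:finite-strip}.
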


\begin{proof}
By Corollary~\ref{cor:differentiation} with the convex set $S_h$, each $F^{(m)}$ is balanced with phase $(-1)^m\omega$, has order at most one, and has all zeros in $S_h\subseteq\Omega_d$. Apply Theorem~\ref{thm:entire-orbit}. Sharpness follows from the quadratic counterexamples in Corollary~\ref{cor:finite-strip}.
\end{proof}

\subsection{Scaled sampling}
For $\delta>0$, define
\begin{equation}
 B_{d,\delta}[F](z):=\sum_{j=0}^d\binom dj
 F\!\left(\delta\left(j-\frac d2\right)\right)z^j.
 \label{eq:scaled-B}
\end{equation}
The exact scaled orbit region is
\begin{equation}
 \Omega_{d,\delta}:=\delta\Omega_d
 =\left\{a+ib:\ a^2-\frac{b^2}{d-1}\le\frac{d\delta^2}{4}\right\}.
 \label{eq:scaled-Omega}
\end{equation}

\begin{corollary}[Scaled exact and strip theorems]
\label{cor:scaled}
Let $d\ge2$, $\delta>0$, and let $F\not\equiv0$ be balanced and entire of order at most one. If $\Zset(F)\subseteq\Omega_{d,\delta}$, then $B_{d,\delta}[F]\in\mathcal C_d$ and its sampled diagonal operator preserves $\mathcal C_d$. If instead $\Zset(F)\subseteq S_h$ with
\[
 h\le\frac{\delta\sqrt d}{2},
\]
then the same conclusions hold for every nonzero derivative $F^{(m)}$. The strip constant is sharp.
\end{corollary}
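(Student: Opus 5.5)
The plan is to reduce everything to the unscaled case $\delta=1$ by a dilation of the variable $s$. Given $F$, put $G(s):=F(\delta s)$. Then, directly from~\eqref{eq:scaled-B} and~\eqref{eq:Bd-recall},
\[
 B_{d,\delta}[F](z)=B_d[G](z),
\]
since the sample points $\delta(j-d/2)$ are exactly the images of $j-d/2$ under $s\mapsto\delta s$. The dilation preserves the relevant structure. Because $\delta$ is real and positive, $\J G(s)=\overline{F(-\delta\bar s)}=(\J F)(\delta s)$, so $F=\omega\J F$ gives $G=\omega\J G$ with the same phase. The order of $G$ equals that of $F$. Finally, $\Zset(G)=\delta^{-1}\Zset(F)$.

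For the first assertion, assume $\Zset(F)\subseteq\Omega_{d,\delta}=\delta\Omega_d$. Then $\Zset(G)\subseteq\Omega_d$, and Theorem~\ref{thm:entire-orbit} gives $B_d[G]\in\mathcal C_d$, hence $B_{d,\delta}[F]\in\mathcal C_d$. The sampled diagonal operator is $P\mapsto P*_dB_{d,\delta}[F]$, as in~\eqref{eq:MFd-conv}. By Proposition~\ref{prop:multiplier-criterion} it preserves $\mathcal C_d$.

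For the strip assertion, $\Zset(F)\subseteq S_h$ implies $\Zset(G)\subseteq S_{h/\delta}$, and $h/\delta\le\sqrt d/2$. Since $G^{(m)}(s)=\delta^mF^{(m)}(\delta s)$, we get
\[
 B_{d,\delta}[F^{(m)}]=\delta^{-m}B_d[G^{(m)}].
\]
Also $F^{(m)}\not\equiv0$ exactly when $G^{(m)}\not\equiv0$. Theorem~\ref{thm:derivative-strip} applied to $G$ then gives $B_d[G^{(m)}]\in\mathcal C_d$, and a nonzero scalar multiple stays in $\mathcal C_d$. The operator statement follows from Proposition~\ref{prop:multiplier-criterion} again.

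For sharpness, suppose $h>\delta\sqrt d/2$ and choose $a$ with $\delta\sqrt d/2<a\le h$. Take $p(s)=s^2-a^2$. It is balanced with phase $1$, has order $0$, and its zeros $\pm a$ lie in $S_h$. Its rescaling $p(\delta s)=\delta^2\bigl(s^2-(a/\delta)^2\bigr)$ has real zeros $\pm a/\delta$ with $a/\delta>\sqrt d/2$. These lie outside $\Omega_d$, since the point $a/\delta$ itself violates~\eqref{eq:Omega} at $b=0$. So by Proposition~\ref{prop:two-orbit}, $B_{d,\delta}[p]=\delta^2B_d[q_{a/\delta}]\notin\mathcal C_d$.

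No step is a real obstacle. The only point needing care is checking that the dilation commutes with $\J$ and keeps the phase $\omega$; this uses $\delta\in\R_{>0}$.
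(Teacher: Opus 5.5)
Your proposal is correct and follows the paper's proof. Both reduce to $\delta=1$ via $G(s)=F(\delta s)$, apply Theorems~\ref{thm:entire-orbit} and~\ref{thm:derivative-strip}, absorb the nonzero factor $\delta^m$, and obtain sharpness from the rescaled quadratic counterexample $s^2-a^2$; you also spell out that the dilation commutes with $\J$ (using $\delta>0$) and the scaled counterexample, which the paper leaves implicit.
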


\begin{proof}
Apply Theorems~\ref{thm:entire-orbit} and~\ref{thm:derivative-strip} to $G(s):=F(\delta s)$. Since
$G^{(m)}(s)=\delta^mF^{(m)}(\delta s)$, one has
\[
 B_d[G^{(m)}]=\delta^mB_{d,\delta}[F^{(m)}],
\]
and the nonzero factor $\delta^m$ does not affect the conclusion.
\end{proof}

\begin{remark}[Finite Fourier interpretation]
\label{rem:finite-fourier}
For
\[
 \Phi_d(P)(t):=e^{-idt/2}P(e^{it}),
 \qquad
 \mathcal E_d=\operatorname{span}\{e^{i(j-d/2)t}:0\le j\le d\},
\]
the sampled multiplier is unitarily equivalent to the finite spectral multiplier
\[
 \Phi_d(M_{F,d}P)=F(-i\partial_t)\Phi_d(P),
\]
where $F(-i\partial_t)e^{i(j-d/2)t}=F(j-d/2)e^{i(j-d/2)t}$.  Hence the entire sampling theorem may also be read as a finite trigonometric Hermite--Poulain theorem on the centered Fourier space.  This viewpoint is related in spirit to, but distinct from, the finite-difference operators studied in~\cite{KatkovaTyaglovVishnyakova}.
\end{remark}

\section{Simplicity, interlacing, and cyclic root flow}
\label{sec:global-interlacing}

We now strengthen the unit-circle theorem to simplicity and strict interlacing.
Two zero-location ingredients are classical: the projective
Hermite--Kakeya--Obreschkoff theorem for real pencils and de Bruijn's contraction of a
zero strip under imaginary translation.  The new step is to combine them with the
sampling identities~\eqref{eq:T-sampling-identities}: the contraction argument shows
that the only possible common-zero obstruction for two consecutive sampled
derivatives is an identically vanishing finite-difference iterate.  Excluding exactly
that obstruction produces a circular hyperbolic pencil with strict interlacing and
monotone root flow.

\subsection{Derivative pencils and circular Obreschkoff theory}

\begin{lemma}[Derivative pencils preserve a vertical strip]
\label{lem:derivative-pencil-strip}
Let $E\not\equiv0$ be an entire function of order at most one satisfying
\[
 \Zset(E)\subseteq S_h,
 \qquad E=\nu\J E,
 \qquad |\nu|=1.
\]
For $t\in\R$, suppose that $E+itE'\not\equiv0$.  Then
\[
 \Zset(E+itE')\subseteq S_h,
 \qquad E+itE'=\nu\J(E+itE').
\]
\end{lemma}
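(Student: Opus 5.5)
The plan has three parts: the balance identity, which is formal; the zero location for polynomials, via a one-variable argument; and the passage to entire functions, via the approximation theorem and Hurwitz.

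\emph{Balance.} Use the commutation $D\J=-\J D$ from the proof of Corollary~\ref{cor:differentiation}. Since $E=\nu\J E$, we get $E'=-\nu\J E'$. Also $\J(itE')=\overline{it}\,\J E'=-it\,\J E'$. Hence
\[
 \nu\J(E+itE')=\nu\J E-it\,\nu\J E'=E+itE'.
\]

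\emph{Zero location for polynomials.} First take $E=p$ a polynomial with $\Zset(p)\subseteq S_h$, and work with $p+itp'$. For $t=0$ there is nothing to prove. For $t\ne0$, write $p(s)=c\prod(s-\rho_k)$. At a zero $s$ of $p+itp'$ with $p(s)\ne0$,
\[
 \sum_k\frac1{s-\rho_k}=\frac{p'(s)}{p(s)}=\frac{i}{t}.
\]
Suppose $\re s>h$. Then every term satisfies $\re\frac1{s-\rho_k}=\frac{\re(s-\rho_k)}{|s-\rho_k|^2}>0$, so the left side has positive real part. But $i/t$ is purely imaginary, a contradiction. The case $\re s<-h$ is symmetric. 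A zero with $p(s)=0$ already lies in $S_h$.

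\emph{Passage to entire functions.} For general $E$, apply Theorem~\ref{thm:balanced-approx} with the closed, convex, reflection-invariant set $E_0=S_h\supseteq i\R$. This gives balanced polynomials $p_N\to E$ locally uniformly with zeros in $S_h$. Then $p_N+itp_N'\to E+itE'$ locally uniformly, by Cauchy's formula. Because the limit is assumed $\not\equiv0$, Hurwitz's theorem applied on closed disks disjoint from $S_h$ shows that $E+itE'$ has no zeros outside $S_h$, exactly as at the end of Corollary~\ref{cor:differentiation}. The order hypothesis enters only through the approximation theorem.

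The only real content is the logarithmic-derivative sign argument. The point to watch is that $t$ is real, so that $i/t$ is purely imaginary; everything else is a routine limiting argument.
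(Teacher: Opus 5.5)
Your proposal is correct and follows essentially the same route as the paper. The paper also gets the phase identity from $D\J=-\J D$, excludes zeros of $p_N+itp_N'$ off $S_h$ for the balanced approximants of Theorem~\ref{thm:balanced-approx} by the sign of $\re(p_N'/p_N)$ against the purely imaginary $i/t$, and concludes with Hurwitz's theorem. The constant-approximant case, which the paper treats separately, is covered by your argument because the empty sum $0$ never equals $i/t$.
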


\begin{proof}
Differentiating $E=\nu\J E$ gives $E'=-\nu\J(E')$, and hence
$iE'=\nu\J(iE')$.  This proves the phase assertion.

For the zero location, apply Theorem~\ref{thm:balanced-approx} with the convex set
$S_h$ and write $p_n\to E$ for the balanced polynomial approximants.  If one of
these approximants is constant, then $p_n+itp_n'=p_n$ is zero-free, so there is
nothing to prove for that approximant.  Thus, in the logarithmic-derivative argument,
we may assume that $p_n$ is nonconstant.  If $t\ne0$ and $z\notin S_h$ were a zero
of $p_n+itp_n'$, then $p_n(z)\ne0$ and
\[
 \frac{p_n'(z)}{p_n(z)}=\frac{i}{t}.
\]
Writing the zeros of $p_n$ as $\rho$ gives
\[
 \re\frac{p_n'(z)}{p_n(z)}
 =\sum_\rho\frac{\re z-\re\rho}{|z-\rho|^2},
\]
which is positive for $\re z>h$ and negative for $\re z<-h$.  This contradicts the
purely imaginary value above.  Hence $p_n+itp_n'$ has all zeros in $S_h$.
Locally uniform convergence and Hurwitz's theorem prove the assertion for $E$.
The case $t=0$ is immediate.
\end{proof}

We use a projective form of Obreschkoff's theorem.  Two degree-$d$ multisets on
$\T$ weakly cyclically interlace if, after a Cayley transformation, they weakly
interlace on $\R\cup\{\infty\}$.  Strict cyclic interlacing means that both multisets
are simple and disjoint and that their points alternate around the circle.

\begin{lemma}[Projective circular Obreschkoff lemma]
\label{lem:circular-obreschkoff}
Let $A$ and $B$ be degree-$d$ polynomials with the same self-inversive phase:
\[
 A(z)=\nu z^d\overline{A(1/\bar z)},
 \qquad
 B(z)=\nu z^d\overline{B(1/\bar z)},
 \qquad |\nu|=1.
\]
Assume that, for every $(a,b)\in\R^2\setminus\{(0,0)\}$, the polynomial
$aA+bB$ is nonzero, has degree $d$, and has all $d$ zeros on $\T$.  Then the
zeros of $A$ and $B$ weakly cyclically interlace.  If $A$ and $B$ are coprime,
then both have simple zeros, their zeros strictly cyclically interlace, and every
nonzero real pencil member has only simple zeros on $\T$.
\end{lemma}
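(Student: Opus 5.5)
The plan is to reduce the circular statement to the classical Hermite--Kakeya--Obreschkoff theorem on the real line by a Cayley transformation that respects the common self-inversive phase. Choose $c$ with $c^2=\nu$ and, for $z=e^{i\theta}$, put
\[
 a(\theta)=\bar c\,e^{-id\theta/2}A(e^{i\theta}),
 \qquad
 b(\theta)=\bar c\,e^{-id\theta/2}B(e^{i\theta}).
\]
The relation $A(z)=\nu z^d\overline{A(1/\bar z)}$ gives $\overline{a(\theta)}=a(\theta)$, and the same holds for $b$. Thus $a$ and $b$ are real trigonometric polynomials of degree $d/2$ in half-angles. After rotating $z$ so that $-1$ (the image of $\infty$) is not a zero of $A$ or $B$, the substitution $x=\tan(\theta/2)$ turns $(1+x^2)^{d/2}a$ and $(1+x^2)^{d/2}b$ into real polynomials $\alpha(x)$ and $\beta(x)$ of degree at most $d$. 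Their zeros correspond bijectively, with multiplicity, to the zeros of $A$ and $B$ on $\T\setminus\{-1\}$, and this correspondence extends projectively, so that a degree drop corresponds to a zero at $\infty$. Real linearity is preserved: $aA+bB$ corresponds to $a\alpha+b\beta$.

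The hypothesis says that every nonzero real combination $a\alpha+b\beta$ is nonzero and has all of its $d$ projective zeros real, counting $\infty$. This is exactly the hypothesis of the projective HKO theorem (see \cite{Obreschkoff,RahmanSchmeisser}). That theorem yields weak interlacing of the zeros of $\alpha$ and $\beta$ on $\R\cup\{\infty\}$, which is weak cyclic interlacing of the zeros of $A$ and $B$. Alternatively, one can argue directly. The Wronskian $W=\alpha'\beta-\alpha\beta'$ has constant sign on $\R$, since otherwise a sign change produces a pencil member with a nonreal pair. Constant sign of $W$ forces $\alpha/\beta$ to be monotone between poles, and this monotonicity gives the interlacing.

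For the strict statement, coprimeness of $A$ and $B$ means that $\alpha$ and $\beta$ have no common projective zero. A double zero $x_0$ of some member $a\alpha+b\beta$ would give $W(x_0)=0$. I will show instead that $W$ does not vanish on $\R$. If $W(x_0)=0$, then the member vanishing at $x_0$ has a multiple zero there, since coprimeness rules out $\alpha(x_0)=\beta(x_0)=0$. The map $x\mapsto[\alpha(x):\beta(x)]\in\mathbb{RP}^1$ is then locally monotone and has degree $d$. A critical point would make a nearby perturbation $(a,b)$ lose real zeros: the fiber count near a fold drops by two, contradicting the hypothesis that each of the finitely many fibers has exactly $d$ real points.

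This topological counting is the step I expect to be the main obstacle. The clean formulation is that $\phi=[\alpha:\beta]\colon\mathbb{RP}^1\to\mathbb{RP}^1$ is a real rational map of degree $d$ all of whose fibers are real. Such a map is an unbranched $d$-fold covering, which requires that no critical points exist. The point $\infty$ is handled by the rotation chosen at the start. Simplicity of the zeros of $A$, of $B$, and of every real pencil member then follows, and strict alternation follows from the monotonicity of $\phi$ combined with disjointness of the zero sets.
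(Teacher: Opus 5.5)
Your proposal is correct and follows essentially the paper's route. The paper also uses a Cayley transform to real binary forms on $\mathbb{RP}^1$ and projective Hermite--Kakeya--Obreschkoff for weak interlacing. Under coprimeness it too uses a local perturbation argument: a multiple zero of any pencil member, equivalently a real critical point of $[\alpha:\beta]$ or a real zero of $W$, forces nonreal zeros in nearby members. The one point to tighten is that your ``fold drops by two'' count only covers quadratic critical points. At a critical point of odd order $q\ge3$ the real map is monotone, and the contradiction comes instead from nearby fibers having only one real point near $x_0$ against local multiplicity $q$. This is exactly the even/odd split the paper handles with Rouch\'e's theorem.
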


\begin{proof}
Consider the Cayley parametrization
\[
 \phi([X:Y])=\frac{X-iY}{X+iY},
\]
which is a bijection from $\mathbb{RP}^1$ onto $\T$.  Choose one square root
$\nu^{1/2}$ and homogenize by
\[
 \widetilde A(X,Y)
 :=\nu^{-1/2}(X+iY)^d
 A\!\left(\frac{X-iY}{X+iY}\right),
\]
with the analogous definition for $\widetilde B$.  The displayed expression is a
binary form of degree $d$; the formula extends polynomially across $X+iY=0$.
For real $X,Y$, the self-inversive relation gives
\[
 \overline{(X+iY)^dA(\phi([X:Y]))}
 =\nu^{-1}(X+iY)^dA(\phi([X:Y])).
\]
It follows that $\widetilde A$ is real-valued on $\R^2$, hence has real
coefficients; the same holds for $\widetilde B$.  Multiplicities are preserved, and
zeros on $\T$ correspond exactly to real projective zeros of these binary forms.  In
particular, a zero represented by the point at infinity in one affine chart is treated
on the same footing as every other projective zero.

The hypothesis says that every nonzero form
$a\widetilde A+b\widetilde B$ is projectively hyperbolic of degree $d$.  The
projective Hermite--Kakeya--Obreschkoff theorem
\cite{Obreschkoff} (see also~\cite[Theorem~6.3.8]{RahmanSchmeisser}) therefore gives weak projective
interlacing, which is weak cyclic interlacing after applying $\phi$.

Assume now that $A$ and $B$ are coprime.  Then the binary forms
$\widetilde A$ and $\widetilde B$ have no common projective zero.  We show that
no nonzero pencil member has a multiple projective zero.  Suppose that a pencil
member $C$ has a zero $x_0\in\mathbb{RP}^1$ of multiplicity $q\ge2$.  Choose an
independent pencil member $D$ with $D(x_0)\ne0$.  Such a choice is possible because
the evaluation functional at $x_0$ is not identically zero on the two-dimensional
pencil; otherwise $x_0$ would be a common zero of $\widetilde A$ and
$\widetilde B$.  Select an affine chart containing $x_0$---using the reciprocal
chart if $x_0$ is the point at infinity---and write
\[
 h(x):=\frac{C(x)}{D(x)}
      =c(x-x_0)^q+O\bigl((x-x_0)^{q+1}\bigr),
 \qquad c\ne0.
\]
Choose a small closed disk around $x_0$ on which $D$ has no zero and on whose
boundary $C$ has no zero.  If $q$ is even, then $h$ has the sign of $c$ on both
sides of $x_0$ in a sufficiently small punctured real neighborhood.  For one sign
of sufficiently small real $\varepsilon$, the equation $h+\varepsilon=0$ has no
real solution there.  If $q$ is odd, then $q\ge3$ and
\[
 h'(x)=cq(x-x_0)^{q-1}\bigl(1+O(x-x_0)\bigr)
\]
has a fixed sign near $x_0$, so $h+\varepsilon=0$ has at most one nearby real
solution.  On the other hand, Rouch\'e's theorem shows that
$C+\varepsilon D$ has exactly $q$ zeros in the disk, counted with multiplicity,
for all sufficiently small $\varepsilon$.  In either parity, some of these zeros
are nonreal for a suitable real $\varepsilon$, contradicting projective
hyperbolicity of every pencil member.

Thus every nonzero member of the real pencil is simple.  In particular,
$\widetilde A$ and $\widetilde B$ have simple roots; coprimeness makes the two root
sets disjoint.  Weak interlacing is therefore strict alternating interlacing.  In any
affine chart, their Wronskian
\[
 W=\widetilde A'\widetilde B-\widetilde A\widetilde B'
\]
has no real zero.  Indeed, at a point $x$ with $\widetilde B(x)\ne0$, a zero
of $W$ would make
\[
 \widetilde A-\frac{\widetilde A(x)}{\widetilde B(x)}\widetilde B
\]
have a multiple zero at $x$; at a zero of $\widetilde B$, simplicity and
coprimeness give $W\ne0$.  Hence
$W$ has a fixed nonzero sign in each affine chart.  Applying $\phi$ proves the
claims on $\T$.
\end{proof}

\subsection{Strip contraction and the strict central difference}

For $c>0$ and $\vartheta\in\R$, define
\begin{equation}
 (\Delta_{\vartheta,c}p)(z)
 :=e^{-i\vartheta/2}p(z+ic)+e^{i\vartheta/2}p(z-ic).
 \label{eq:Delta-theta-c}
\end{equation}
If $p$ is real entire, then so is $\Delta_{\vartheta,c}p$.

The contraction estimate in the next lemma is de Bruijn's strip theorem in the
present normalization; see~\cite[Theorems~5, 6, and~8]{deBruijnTrig}.  We include
the direct argument because the proof also records the strict final-step statement
needed for the sampling obstruction.  The real-zero case of the latter is closely
related to the finite-difference Hermite--Poulain results of
Katkova--Tyaglov--Vishnyakova~\cite{KatkovaTyaglovVishnyakova}.

\begin{lemma}[De Bruijn contraction and a strict final step]
\label{lem:strict-debruijn}
Let $p$ be a real entire function of order at most one and let $c>0$.
\begin{enumerate}
\item If $\Zset(p)\subseteq\{z:|\im z|\le b\}$ and
$\Delta_{\vartheta,c}p\not\equiv0$, then
\begin{equation}
 \Zset(\Delta_{\vartheta,c}p)
 \subseteq
 \left\{z:|\im z|\le\sqrt{\max(b^2-c^2,0)}\right\}.
 \label{eq:strip-contraction}
\end{equation}
\item If $b<c$ and $\Zset(p)\subseteq\{z:|\im z|\le b\}$, then every nonzero
$\Delta_{\vartheta,c}p$ has only simple real zeros.
\end{enumerate}
\end{lemma}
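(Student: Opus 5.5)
The plan is to prove both parts first for real polynomials by a factor-by-factor modulus and argument comparison. For part (1) we then pass to entire functions by approximation; for part (2) we work directly with the Hadamard product.

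\emph{Reduction to polynomials in (1).} Under $s=iz$, a real entire $p$ (that is, $\overline{p(\bar z)}=p(z)$) becomes a $\J$-invariant function of $s$. The horizontal strip $\{|\im z|\le b\}$ becomes the vertical strip $S_b$, which is closed, convex, reflection invariant and contains $i\R$. Theorem~\ref{thm:balanced-approx} with $\omega=1$ therefore yields real polynomials $p_N\to p$ locally uniformly, with all zeros in $\{|\im z|\le b\}$. Since $\Delta_{\vartheta,c}$ only involves shifts, $\Delta_{\vartheta,c}p_N\to\Delta_{\vartheta,c}p$ locally uniformly. If the limit is nonzero, Hurwitz's theorem applied to closed disks disjoint from the target strip transfers the zero location. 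So it suffices to treat a real polynomial $p$.

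\emph{The polynomial case of (1).} A zero of $\Delta_{\vartheta,c}p$ at $z$ forces $|p(z+ic)|=|p(z-ic)|$, since the two coefficients are unimodular. Write $z=x+iy$ with $y>\sqrt{\max(b^2-c^2,0)}$; the case $y<-\sqrt{\max(b^2-c^2,0)}$ follows from $z\mapsto\bar z$, because $p$ is real. We aim to show $|p(z+ic)|>|p(z-ic)|$ by grouping the zeros of $p$ into real zeros and conjugate pairs $\{\rho,\bar\rho\}$.
\begin{itemize}
\item For a real zero $\rho$ one has $|z+ic-\rho|^2-|z-ic-\rho|^2=4cy>0$.
\item For a pair $\rho=\alpha+i\beta$, $0<\beta\le b$, translate so that $\alpha=0$ and compare
\[
 |(w+ic)^2+\beta^2|^2 \quad\text{with}\quad |(w-ic)^2+\beta^2|^2,\qquad w=x+iy.
\]
Expanding, the difference should factor as $16cy\,\bigl(x^2+y^2+c^2-\beta^2\bigr)$, up to a positive constant. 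This is positive once $y^2>\beta^2-c^2$, and that holds under our hypothesis on $y$.
\end{itemize}
Multiplying the strict factor inequalities gives the claim, so there are no zeros outside the contracted strip. This pair computation is the step I expect to need the most care. It is exactly de Bruijn's lemma, and I would verify the factorization explicitly rather than cite it.

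\emph{Part (2).} Now $b<c$, so part (1) already places every zero of $\Delta_{\vartheta,c}p$ on $\R$. For simplicity, work directly with the entire function. Because $p$ is real, for real $x$ we have
\[
 p(x-ic)=\overline{p(x+ic)},
\]
and $p(x+ic)\ne0$ since $c>b$. Writing $p(x+ic)=R(x)e^{i\psi(x)}$ with $R>0$ and $\psi$ smooth, we get
\[
 \Delta_{\vartheta,c}p(x)=2R(x)\cos\bigl(\psi(x)-\vartheta/2\bigr).
\]
Hence at any zero,
\[
 (\Delta_{\vartheta,c}p)'(x)=\mp2R(x)\psi'(x),
\]
and it suffices to show $\psi'<0$ everywhere.

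To compute $\psi'$, use the Hadamard factorization $p(z)=Ce^{\alpha z}z^r\prod E_1(z/\rho)$, where $C$ and $\alpha$ are real because $p$ is real (compare the argument in Theorem~\ref{thm:balanced-approx}). Then
\[
 \psi'(x)=\im\frac{p'(x+ic)}{p(x+ic)}.
\]
Each zero $\rho=\alpha_\rho+i\beta_\rho$ contributes
\[
 -\frac{c-\beta_\rho}{|x+ic-\rho|^2}<0,
\]
the factor $z^r$ contributes $-rc/(x^2+c^2)$, and the exponential terms $e^{\alpha z}$ and $e^{z/\rho}$ contribute real quantities after pairing conjugate zeros. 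The sum converges absolutely because $\sum|\rho|^{-2}<\infty$. If $p$ has at least one zero the sum is strictly negative, so $\psi'<0$ and every real zero of $\Delta_{\vartheta,c}p$ is simple.

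If $p$ has no zeros at all, then $p=Ce^{\alpha z}$ and $\Delta_{\vartheta,c}p$ is a constant times $e^{\alpha z}$. It is then either identically zero or zero-free, so the statement holds trivially.
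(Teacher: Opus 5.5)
Your proposal is correct and follows the paper's proof essentially step by step. It uses the same balanced polynomial approximation through $G(s)=p(\pm is)$, the same factorwise modulus comparison (the pair difference is exactly $8cy\bigl(x^2+y^2+c^2-\beta^2\bigr)$), Hurwitz's theorem, and the same strictly negative phase derivative on $\im z=c$ from the genus-one Hadamard product. One detail is missing in part~(1): a constant approximant gives only the equality $|p_N(z+ic)|=|p_N(z-ic)|$, not the strict inequality. The paper avoids this by first disposing of zero-free $p$ and then noting, via Hurwitz, that the $p_N$ are eventually nonconstant.
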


\begin{proof}
If $p\equiv0$, both assertions are vacuous.  Hence assume $p\not\equiv0$.
If $p$ is zero-free, Hadamard factorization and reality give
$p(z)=Ce^{az}$ with $C,a\in\R$.  Then $\Delta_{\vartheta,c}p$ is either
identically zero or a nonzero constant multiple of $e^{az}$, so both assertions
are immediate.  Hence we may assume that $p$ has at least one zero.

We first make the polynomial approximation used in part~(1) explicit.  Define
$G(s):=p(is)$.  Since $p$ is real entire,
\[
 \J G(s)=\overline{p(-i\bar s)}=p(is)=G(s),
\]
and the zeros of $G$ lie in the vertical strip $S_b$.  Apply
Theorem~\ref{thm:balanced-approx} with the closed set $S_b$ to obtain
$\J$-invariant polynomials $q_n\to G$, all of whose zeros lie in $S_b$.  Then
\[
 p_n(z):=q_n(-iz)
\]
is a real polynomial, $p_n\to p$ locally uniformly, and
$\Zset(p_n)\subseteq\{|\,\im z|\le b\}$.  Indeed,
$q_n=\J q_n$ gives $p_n(\bar z)=\overline{p_n(z)}$.  Since $p$ has a zero,
Hurwitz's theorem shows that $p_n$ is nonconstant for all sufficiently large $n$;
discarding finitely many initial terms, we assume this for every $n$.

Fix $z=x+iy$ with
\[
 y>\sqrt{\max(b^2-c^2,0)}.
\]
For a real zero $u$ of $p_n$,
\[
 |z+ic-u|^2-|z-ic-u|^2=4cy>0.
\]
For a conjugate pair $u\pm iv$, where $0<v\le b$, direct expansion gives
\begin{align*}
 &\prod_{\pm}|z+ic-(u\pm iv)|^2
 -\prod_{\pm}|z-ic-(u\pm iv)|^2\\
 &\hspace{28mm}=8cy\bigl((x-u)^2+y^2+c^2-v^2\bigr)>0.
\end{align*}
The last inequality follows from $v^2\le b^2<y^2+c^2$.  Multiplying over all
real zeros and conjugate pairs yields
\[
 |p_n(z+ic)|>|p_n(z-ic)|.
\]
Because the two coefficients in $\Delta_{\vartheta,c}$ have the same modulus,
$\Delta_{\vartheta,c}p_n(z)\ne0$ in this upper region.  The reverse modulus
inequality holds in the reflected lower region.  Since
$\Delta_{\vartheta,c}p_n\to\Delta_{\vartheta,c}p$ locally uniformly, Hurwitz's
theorem proves~\eqref{eq:strip-contraction}; the excluded alternative that the
limit vanish on an open disk would imply
$\Delta_{\vartheta,c}p\equiv0$.

Now assume $b<c$.  Part~(1) shows that every zero of a nonzero
$\Delta_{\vartheta,c}p$ is real.  The lines $\im z=\pm c$ are disjoint from
$\Zset(p)$, so $p(x+ic)\ne0$ for every $x\in\R$.  Group a genus-one canonical
product for $p$ into real zeros $u$
and conjugate pairs $u\pm iv$, $0<v\le b$.  Reality of $p$ makes the residual
exponential factor $Ce^{az}$ with $C,a\in\R$, and the reciprocal terms in each
conjugate pair have real sum.  Taking the imaginary part of the logarithmic
derivative at $x+ic$ therefore gives
\begin{align}
 \frac d{dx}\arg p(x+ic)
 &=\im\frac{p'(x+ic)}{p(x+ic)}\notag\\
 &=-\sum_{u\in\R}\frac{c}{(x-u)^2+c^2}\notag\\
 &\quad-\sum_{u+iv,\ v>0}
 \left(
 \frac{c-v}{(x-u)^2+(c-v)^2}
 +\frac{c+v}{(x-u)^2+(c+v)^2}
 \right)<0.
 \label{eq:strict-phase-derivative}
\end{align}
Zeros and multiplicities are included in the sums.  The genus-one logarithmic
derivative converges locally uniformly on the line $\im z=c$, which stays a
positive distance from the zero strip, so termwise evaluation is justified.  Every
summand is nonpositive and at least one is strictly negative.

Set
\[
 Z(x):=e^{-i\vartheta/2}p(x+ic).
\]
Since $p$ is real entire,
\[
 \Delta_{\vartheta,c}p(x)=Z(x)+\overline{Z(x)}=2\re Z(x).
\]
At a zero of this real-valued function, $Z(x)\ne0$ and its argument is an odd
multiple of $\pi/2$.  Writing $Z=|Z|e^{i\phi}$, equation
\eqref{eq:strict-phase-derivative} gives $\phi'(x)<0$, and hence
\[
 \frac d{dx}\re Z(x)=-|Z(x)|\phi'(x)\sin\phi(x)\ne0
\]
at the zero.  Thus every zero is simple.
\end{proof}

\subsection{The general strict sampling theorem}

For $\zeta\in\T$ and $\delta>0$, put
\begin{equation}
 \mathscr T_{\zeta,\delta}:=e^{-\delta D/2}+\zeta e^{\delta D/2},
 \qquad D=\frac d{ds}.
 \label{eq:T-zeta-delta}
\end{equation}
The binomial theorem gives
\begin{equation}
 \mathscr T_{\zeta,\delta}^{\,d}E(0)=B_{d,\delta}[E](\zeta),
 \qquad
 \bigl(\mathscr T_{\zeta,\delta}^{\,d}E\bigr)'(0)
 =B_{d,\delta}[E'](\zeta).
 \label{eq:T-sampling-identities}
\end{equation}
The identities show that a common sampled zero is the same as a double zero at the
origin of one finite-difference iterate.  The classical contraction theorem does not
by itself rule out such a double zero, because an iterate may vanish identically.
The next proposition is the additional sampling step: it proves that this identically
zero case is the only obstruction.  This is the point at which the de Bruijn input is
converted into coprimeness of two consecutive sampled derivatives.

\begin{proposition}[Exact obstruction to strictness]
\label{prop:exact-strict-obstruction}
Let $d\ge2$, $\delta>0$, and $0\le h<\delta\sqrt d/2$.  Let $E\not\equiv0$ be
balanced, entire of order at most one, and satisfy $\Zset(E)\subseteq S_h$.  Put
\[
 A:=B_{d,\delta}[E],
 \qquad C:=B_{d,\delta}[E'].
\]
Then, for every $\zeta\in\T$,
\begin{equation}
 A(\zeta)=C(\zeta)=0
 \quad\Longleftrightarrow\quad
 \mathscr T_{\zeta,\delta}^{\,d}E\equiv0.
 \label{eq:exact-strict-obstruction}
\end{equation}
Consequently, the nondegeneracy condition
$\mathscr T_{\zeta,\delta}^{\,d}E\not\equiv0$ for all $\zeta\in\T$ is equivalent to
coprimeness of $A$ and $C$.
\end{proposition}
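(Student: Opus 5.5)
The plan is to use the sampling identities \eqref{eq:T-sampling-identities}. With $G:=\mathscr T_{\zeta,\delta}^{\,d}E$ we have $A(\zeta)=G(0)$ and $C(\zeta)=G'(0)$. So $A(\zeta)=C(\zeta)=0$ says precisely that $G$ has a zero of order at least two at the origin, or vanishes identically. The reverse implication in \eqref{eq:exact-strict-obstruction} is then trivial. For the forward implication it suffices to prove the following: if $G\not\equiv0$, then every zero of $G$ is simple. To do this we rotate to the real-entire setting of Lemma~\ref{lem:strict-debruijn}.

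\textbf{Rotation.} Write $\zeta=e^{i\vartheta}$ and use the phase normalization from the proof of Theorem~\ref{thm:balanced-approx}: choose $c$ with $|c|=1$ so that $cE$ is $\J$-invariant. Then $p(z):=cE(-iz)$ is real entire of order at most one. Its zeros lie in $\{|\im z|\le h\}$, because $s=-iz$ maps the vertical strip $S_h$ onto this horizontal strip. A translation $s\mapsto s\mp\delta/2$ becomes $z\mapsto z\mp i\delta/2$ (check the sign), so that
\[
 e^{-i\vartheta/2}\,\mathscr T_{\zeta,\delta}E(-iz)=c^{-1}(\Delta_{\vartheta,\delta/2}p)(z)
\]
up to the orientation of $\vartheta$. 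Hence $G(-iz)$ equals a unimodular constant times $\Delta_{\vartheta,\delta/2}^{\,d}p(z)$. Each application of $\Delta_{\vartheta,\delta/2}$ preserves reality and order at most one. This holds because $\Delta$ is a finite combination of translates, and reality is noted after \eqref{eq:Delta-theta-c}.

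\textbf{Iterated contraction.} Start from $b_0=h$ and apply Lemma~\ref{lem:strict-debruijn}(1) $d-1$ times with $c=\delta/2$. This gives $b_{j}^2=\max(b_{j-1}^2-\delta^2/4,0)$, so the zeros of $\Delta^{d-1}_{\vartheta,\delta/2}p$ lie in the strip of half-width $b_{d-1}$ with
\[
 b_{d-1}^2=\max\bigl(h^2-(d-1)\delta^2/4,0\bigr).
\]
Now $h<\delta\sqrt d/2$ gives $h^2<d\delta^2/4$, so $b_{d-1}^2<\delta^2/4$, that is, $b_{d-1}<c$. The final step is then governed by Lemma~\ref{lem:strict-debruijn}(2): if $\Delta^{d}_{\vartheta,\delta/2}p\not\equiv0$, it has only simple real zeros. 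In particular $G$ cannot have a double zero at $0$. This proves \eqref{eq:exact-strict-obstruction}.

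\textbf{Points needing care.} One subtlety is that the intermediate iterates $\Delta^{j}p$ ($j<d$) might vanish identically. If so, then $\Delta^dp\equiv0$ as well, and there is nothing to prove. Otherwise part (1) applies at each stage, since its hypothesis is only nonvanishing. For coprimeness: by Theorem~\ref{thm:derivative-strip}, $A$ and $C$ (the latter applied to $E'$, which is balanced with zeros in $S_h$) lie in $\mathcal C_d$. All their zeros are therefore on $\T$, so a common factor would mean a common zero $\zeta\in\T$. The equivalence just proved then turns the nondegeneracy condition into coprimeness. I expect the main obstacle to be the bookkeeping in the rotation step. The sign conventions of $\vartheta$ and $ic$ versus $\mp\delta/2$, and the unimodular constants, must be arranged so that $\mathscr T_{\zeta,\delta}$ really becomes a $\Delta_{\vartheta,\delta/2}$ acting on a real entire function. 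This needs the two coefficients to have equal modulus and to be conjugate up to a common phase, which is why we factor out $\zeta^{1/2}$.
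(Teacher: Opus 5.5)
Your proposal is correct and is essentially the paper's proof. It uses the same rotation to a real entire $p$; your choice $p(z)=cE(-iz)$ yields $\Delta_{-\vartheta,\delta/2}$ where the paper's $p(z)=E(iz)$ yields $\Delta_{\vartheta,\delta/2}$, which is harmless because Lemma~\ref{lem:strict-debruijn} holds for every real $\vartheta$. It then applies $d-1$ de Bruijn contractions down to half-width below $\delta/2$, invokes the strict final step, and deduces coprimeness from the fact that all zeros of $A$ and $C$ lie on $\T$.

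You need to add two things.

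\begin{itemize}
\item For general $\delta$, cite Corollary~\ref{cor:scaled} rather than Theorem~\ref{thm:derivative-strip}, which is stated only for $\delta=1$.
\item Handle the degenerate case $E'\equiv0$ separately. There $C\equiv0\notin\mathcal C_d$, so your citation does not apply. In this case $E$ is a nonzero constant, $\mathscr T_{-1,\delta}^{\,d}E\equiv0$, and $A$ and $C$ are not coprime, so the equivalence still holds, as the paper notes.
\end{itemize}
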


\begin{proof}
The reverse implication in~\eqref{eq:exact-strict-obstruction} follows immediately
from~\eqref{eq:T-sampling-identities}.  For the converse, suppose that
$A(\zeta)=C(\zeta)=0$, write $\zeta=e^{i\vartheta}$, and set
\[
 H:=\mathscr T_{\zeta,\delta}^{\,d}E.
\]
Then $H(0)=H'(0)=0$.  We show that a nonzero $H$ cannot have a multiple zero.

After replacing $E$ and $H$ by the same nonzero constant multiple, we may assume
that $E$ is $\J$-invariant.  Put $p(z)=E(iz)$.  Then $p$ is real entire of order at
most one and $\Zset(p)\subseteq\{|\im z|\le h\}$.  With $c=\delta/2$, direct
substitution gives
\[
 \bigl(\mathscr T_{\zeta,\delta}E\bigr)(iz)
 =p(z+ic)+e^{i\vartheta}p(z-ic)
 =e^{i\vartheta/2}\Delta_{\vartheta,c}p(z).
\]
The two translation operators commute, so iteration yields the exact identity
\begin{equation}
 \bigl(\mathscr T_{\zeta,\delta}^{\,r}E\bigr)(iz)
 =e^{ir\vartheta/2}\Delta_{\vartheta,c}^{\,r}p(z)
 \qquad(r\ge0).
 \label{eq:T-Delta-intertwining}
\end{equation}
If $H$ is nonzero, every preceding central-difference iterate is nonzero.  Repeated
application of Lemma~\ref{lem:strict-debruijn}(1) shows that after $d-1$ iterations
the zero strip has half-width at most
\[
 b_{d-1}:=\sqrt{\max(h^2-(d-1)c^2,0)}<c,
\]
because $h<\sqrt d\,c$.  Hence the final nonzero difference has only simple real
zeros by Lemma~\ref{lem:strict-debruijn}(2).  Equation~\eqref{eq:T-Delta-intertwining}
then implies that every zero of $H$ is simple, contradicting $H(0)=H'(0)=0$.
Therefore $H\equiv0$, proving~\eqref{eq:exact-strict-obstruction}.

It remains only to translate the pointwise obstruction into coprimeness.  If
$E'\not\equiv0$, then Corollary~\ref{cor:differentiation} gives
$\Zset(E')\subseteq S_h$, and Corollary~\ref{cor:scaled}, applied to $E$ and to
$E'$, places both $A$ and $C$ in $\mathcal C_d$.  Thus every zero of either
polynomial lies on $\T$, and the two polynomials have a common factor if and only
if they have a common zero on $\T$.  The equivalence
\eqref{eq:exact-strict-obstruction} rules out exactly those common zeros.  If
$E'\equiv0$, then $E$ is a nonzero constant, $C\equiv0$, and
$\mathscr T_{-1,\delta}^{\,d}E\equiv0$; in this case the nondegeneracy
condition fails, and $A$ and $C$ are not coprime under the convention that the zero
polynomial is not coprime to any polynomial.  This proves the final assertion.
\end{proof}

\begin{theorem}[Strict strip-to-circle sampling]
\label{thm:strict-sampling}
Let $d\ge2$, $\delta>0$, and
\[
 0\le h<\frac{\delta\sqrt d}2.
\]
Let $F\not\equiv0$ be an entire function of order at most one such that
\[
 \Zset(F)\subseteq S_h,
 \qquad F=\omega\J F,
 \qquad |\omega|=1.
\]
Fix $m\ge0$ such that $F^{(m)}\not\equiv0$.  The following two conditions are
equivalent:
\begin{enumerate}
\item[(a)]
\begin{equation}
 \mathscr T_{\zeta,\delta}^{\,d}F^{(m)}\not\equiv0
 \qquad\text{for every }\zeta\in\T;
 \label{eq:strict-nondegeneracy}
\end{equation}
\item[(b)] the polynomials $B_{d,\delta}[F^{(m)}]$ and
$B_{d,\delta}[F^{(m+1)}]$ are coprime.
\end{enumerate}
Whenever these equivalent conditions hold, the following conclusions hold.
\begin{enumerate}
\item The polynomials
\[
 A_m:=B_{d,\delta}[F^{(m)}],
 \qquad A_{m+1}:=B_{d,\delta}[F^{(m+1)}]
\]
have only simple zeros on $\T$ and their zeros strictly cyclically interlace.
\item For every $t\in\R$, $A_m+itA_{m+1}$ has $d$ simple zeros on $\T$.
\item After a fixed Cayley transformation, the roots of this pencil form monotone
projective root branches; in any affine chart in which a branch is finite, all root
derivatives have the same sign.
\end{enumerate}
\end{theorem}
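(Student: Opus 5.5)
The plan is to apply Proposition~\ref{prop:exact-strict-obstruction} with $E:=F^{(m)}$, and then feed the resulting coprimeness into Lemma~\ref{lem:circular-obreschkoff} for the pair $A=A_m$, $B=iA_{m+1}$.

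\textbf{Step 1: hypotheses on $E$.} By Corollary~\ref{cor:differentiation} with the convex set $S_h$, the function $E=F^{(m)}$ is balanced with phase $(-1)^m\omega$. It is entire of order at most one, since derivatives preserve order, and $\Zset(E)\subseteq S_h$. Since $E\not\equiv0$, Proposition~\ref{prop:exact-strict-obstruction} applies and gives the equivalence (a)$\Leftrightarrow$(b) directly; note $B_{d,\delta}[E']=A_{m+1}$.

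\textbf{Step 2: common self-inversive phase.} If (b) holds, then $A_{m+1}\not\equiv0$, so $F^{(m+1)}\not\equiv0$. Corollary~\ref{cor:scaled} places both $A_m$ and $A_{m+1}$ in $\mathcal C_d$, each of exact degree $d$. By Lemma~\ref{lem:self-inversive} (scaled version, same proof), $A_m$ is self-inversive with phase $\nu=(-1)^m\omega$ and $A_{m+1}$ with phase $-\nu$. Hence $iA_{m+1}$ has phase $\nu$, because conjugation turns $i$ into $-i$. So $A:=A_m$ and $B:=iA_{m+1}$ share the phase $\nu$.

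\textbf{Step 3: the real pencil.} For $(a,b)\in\R^2\setminus\{0\}$ I must show $aA+bB$ is nonzero, of degree $d$, with all zeros on $\T$.
\begin{itemize}
\item If $a=0$, this is $b\,iA_{m+1}\in\mathcal C_d$.
\item If $a\ne0$, then $aA+bB=a\,B_{d,\delta}[E+itE']$ with $t=b/a$.
\end{itemize}
Lemma~\ref{lem:derivative-pencil-strip} shows that $G_t:=E+itE'$ is balanced with phase $\nu$ and has zeros in $S_h$, provided $G_t\not\equiv0$. If $G_t\equiv0$, then $E=Ce^{is/t}$ is zero-free; its samples $Ce^{i\delta(j-d/2)/t}$ give $A=C'(1+e^{i\delta/t}z)^d$, and $iA_{m+1}=-A/t$, contradicting coprimeness. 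Otherwise $G_t$ is entire of order at most one, so Corollary~\ref{cor:scaled} gives $B_{d,\delta}[G_t]\in\mathcal C_d$. Its degree is exactly $d$ because $\mathcal C_d$ requires degree $d$: the leading coefficient $G_t(\delta d/2)$ is nonzero since $\delta d/2>h$. The corollary's hypothesis needs $h\le\delta\sqrt d/2$, which holds.

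\textbf{Step 4: conclusions.} Lemma~\ref{lem:circular-obreschkoff}, together with coprimeness of $A$ and $B$ (equivalent to (b)), yields:
\begin{itemize}
\item simple zeros and strict cyclic interlacing, which is conclusion (1);
\item simplicity of every pencil member, in particular $A_m+itA_{m+1}=A+tB$, which is conclusion (2).
\end{itemize}
For conclusion (3), I will use the Wronskian produced in that lemma. In the Cayley chart, the roots $x(t)$ of $\widetilde A+t\widetilde B$ are simple, so the implicit function theorem gives smooth branches with
\[
x'(t)=-\frac{\widetilde B(x)}{\widetilde A'(x)+t\widetilde B'(x)}.
\]
Using $\widetilde A=-t\widetilde B$ at the root, this becomes $x'(t)=\widetilde B(x)^2/W(x)$. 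Since $W$ has a fixed nonzero sign in the chart and $\widetilde B(x)\ne0$ at roots (by coprimeness, for $t$ finite), all root derivatives share that sign.

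\textbf{Main obstacle.} The delicate point is Step 3: I must check that every real pencil member stays in $\mathcal C_d$, including the degenerate case $G_t\equiv0$ and the degree condition. A secondary point is the bookkeeping of the phase $\nu$ versus $-\nu$, which forces the factor $i$ in $B$.
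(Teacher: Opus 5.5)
Your proposal is correct and follows essentially the same route as the paper. It applies Proposition~\ref{prop:exact-strict-obstruction} to $E=F^{(m)}$, places every real combination of $A_m$ and $iA_{m+1}$ in $\mathcal C_d$ via Lemma~\ref{lem:derivative-pencil-strip} and Corollary~\ref{cor:scaled}, and then uses Lemma~\ref{lem:circular-obreschkoff} and the Wronskian. There are two minor differences: you exclude the degenerate members ($A_{m+1}\equiv0$, $E+itE'\equiv0$) via coprimeness (b) rather than via (a) as the paper does, which is equivalent; and you have a harmless sign slip, since with $W=\widetilde A'\widetilde B-\widetilde A\widetilde B'$ the root equation gives $x'(t)=-\widetilde B(x)^2/W(x)$.
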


\begin{proof}
Put $E=F^{(m)}$, which is nonzero by hypothesis.  It is balanced with phase
$(-1)^m\omega$ and has all zeros in $S_h$ by
Corollary~\ref{cor:differentiation}.  Proposition~\ref{prop:exact-strict-obstruction}
proves the equivalence of (a) and (b).  Assume these conditions.  The polynomial
$A_m$ and the polynomial $iA_{m+1}$ have the same self-inversive phase.

We first show that every nonzero real linear combination of $A_m$ and $iA_{m+1}$
belongs to $\mathcal C_d$.  Let $a,b\in\R$ be not both zero.  If $b=0$, the
corresponding function $aE$ is nonzero and has all its zeros in $S_h$.  If $a=0$,
condition~\eqref{eq:strict-nondegeneracy} rules out a nonzero constant $E$ (take
$\zeta=-1$), so $E'\not\equiv0$; Corollary~\ref{cor:differentiation} then puts all
zeros of $ibE'$ in $S_h$.  Finally, suppose $ab\ne0$.  An identity
$aE+ibE'\equiv0$ would give $E'=i(a/b)E$ and hence
$E(s)=Ce^{i(a/b)s}$.  Choosing $\zeta=-e^{-i(a/b)\delta}$ would then make
$\mathscr T_{\zeta,\delta}E\equiv0$, contrary to
\eqref{eq:strict-nondegeneracy}.  Thus the function is nonzero, and after division
by $a$ Lemma~\ref{lem:derivative-pencil-strip}, with $t=b/a$, places all its zeros
in $S_h$.

In all three cases the corresponding function is balanced with the common phase of
$E$ and $iE'$.  Since $h<\delta\sqrt d/2$, Corollary~\ref{cor:scaled} gives
\[
 aA_m+biA_{m+1}\in\mathcal C_d.
\]
The circular Obreschkoff lemma therefore yields weak cyclic interlacing.
Proposition~\ref{prop:exact-strict-obstruction} and
\eqref{eq:strict-nondegeneracy} show that $A_m$ and $iA_{m+1}$ are coprime.
Lemma~\ref{lem:circular-obreschkoff} proves
parts~(1) and~(2).  For part~(3), apply the Cayley transformation from the proof of
that lemma and dehomogenize in an affine chart.  Let $\widetilde A,\widetilde B$ be
the resulting real polynomials.  Strict interlacing implies that
\[
 W(x):=\widetilde A'(x)\widetilde B(x)
       -\widetilde A(x)\widetilde B'(x)
\]
has a fixed nonzero sign.  If $x_j(t)$ is a finite root of
$\widetilde A+t\widetilde B$, implicit differentiation and the root equation give
\begin{equation}
 x_j'(t)
 =-\frac{\widetilde B(x_j(t))^2}{W(x_j(t))}.
 \label{eq:root-flow-derivative}
\end{equation}
All finite branches therefore move in the same direction.  Projective continuation
through infinity gives the monotone cyclic flow on $\T$.
\end{proof}

\begin{example}[Sharpness of the open strict threshold]
\label{ex:strict-endpoint-sharp}
The strict inequality in Theorem~\ref{thm:strict-sampling} cannot be replaced, uniformly
in $d$, by a weak one.  Take $d=2$ and
\[
 F(s)=s^2-\frac{\delta^2}{2}.
\]
Then $F$ is balanced, its zeros lie at the two boundary points
$\pm\delta/\sqrt2$ of the strip with half-width $\delta\sqrt2/2$, and
\[
 B_{2,\delta}[F](z)=\frac{\delta^2}{2}(z-1)^2.
\]
Thus simplicity fails at the endpoint.  The nondegeneracy hypothesis is nevertheless
satisfied: for $\zeta\ne-1$, the coefficient of $s^2$ in
$\mathscr T_{\zeta,\delta}^{\,2}F$ is $(1+\zeta)^2$, while
\[
 \mathscr T_{-1,\delta}^{\,2}F(s)
 =F(s-\delta)-2F(s)+F(s+\delta)=2\delta^2.
\]
Hence the open strip threshold in the theorem is sharp as a degree-uniform statement.
\end{example}

\begin{remark}[The obstruction is exact]
\label{rem:nondegeneracy-use}
Proposition~\ref{prop:exact-strict-obstruction} shows that
\eqref{eq:strict-nondegeneracy} is not merely a technical sufficient hypothesis: it
is exactly the condition excluding common zeros of two consecutive sampled
derivatives.  Hence, under the remaining hypotheses of
Theorem~\ref{thm:strict-sampling}, it is also necessary and sufficient for strict
cyclic interlacing.  In the completed $L$-function applications below, a right-edge
asymptotic verifies this nondegeneracy directly.
\end{remark}

\section{Completed functional equations and derivative period polynomials}

We now transfer the general sampling theory to completed $L$-functions.  Centering a functional equation at its symmetry point turns it into the balance relation used in the previous sections, and a zero strip becomes a centered strip for the sampling function.  We first record this general principle, then apply it to Riemann's $\xi$-function and to primitive holomorphic newforms.

\begin{theorem}[Sampling from a completed functional equation]
\label{thm:functional-equation-sampling}
Let $w\in\R$, $h\ge0$, $\delta>0$, and $d\ge2$.  Let $\Xi\not\equiv0$ be entire of
order at most one and suppose
\begin{equation}
 \Xi(s)=\epsilon\,\overline{\Xi(w-\bar s)},
 \qquad |\epsilon|=1,
 \label{eq:general-functional-equation}
\end{equation}
and
\begin{equation}
 \Zset(\Xi)\subseteq
 \left\{s:\left|\re s-\frac w2\right|\le h\right\}.
 \label{eq:general-zero-strip}
\end{equation}
If $h\le\delta\sqrt d/2$, then for every $m\ge0$ with
$\Xi^{(m)}\not\equiv0$,
\begin{equation}
 P_{\Xi,m}^{(d,\delta)}(z)
 :=\sum_{j=0}^d\binom dj
 \Xi^{(m)}\!\left(\frac w2+\delta\left(j-\frac d2\right)\right)z^j
 \label{eq:general-L-sampling}
\end{equation}
belongs to $\mathcal C_d$.  Its binomial coefficient vector is a circular multiplier
sequence of degree $d$.  If $\Xi$ is not a polynomial, the conclusion holds for all
$m\ge0$.
\end{theorem}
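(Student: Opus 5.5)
The plan is to reduce to Corollary~\ref{cor:scaled} by centering at the symmetry point. Set $F(s):=\Xi(w/2+s)$. Then $F$ is entire of order at most one and $F\not\equiv0$. The functional equation~\eqref{eq:general-functional-equation} evaluated at $w/2+s$ gives
\[
 F(s)=\epsilon\,\overline{\Xi(w/2-\bar s)}=\epsilon\,\overline{F(-\bar s)}=\epsilon\,(\J F)(s),
\]
so $F$ is balanced with phase $\epsilon$. The zero hypothesis~\eqref{eq:general-zero-strip} translates to $\Zset(F)\subseteq S_h$.

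Since $F^{(m)}(s)=\Xi^{(m)}(w/2+s)$, the polynomial in~\eqref{eq:general-L-sampling} is exactly $B_{d,\delta}[F^{(m)}]$. The strip part of Corollary~\ref{cor:scaled} applies because $h\le\delta\sqrt d/2$, and yields $B_{d,\delta}[F^{(m)}]\in\mathcal C_d$ for every $m$ with $F^{(m)}\not\equiv0$. The same corollary also shows that the sampled diagonal operator preserves $\mathcal C_d$. By Proposition~\ref{prop:multiplier-criterion} and~\eqref{eq:MFd-conv}, this is precisely the statement that the coefficient vector $\bigl(\Xi^{(m)}(w/2+\delta(j-d/2))\bigr)_{j}$ is a circular multiplier sequence of degree $d$. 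The only point to check here is that scaling by $\delta$ is compatible with the multiplier identity; this is handled as in the proof of Corollary~\ref{cor:scaled}, via $G(s)=F(\delta s)$.

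For the final sentence, note that if $\Xi$ is not a polynomial, then no derivative $\Xi^{(m)}$ vanishes identically. Hence the nonvanishing hypothesis holds automatically for all $m\ge0$.

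I expect no real obstacle. The one step needing care is the sign and conjugation bookkeeping in the centering, namely confirming that $\overline{\Xi(w-\bar s)}$ becomes $\overline{F(-\bar s)}$ with no change of phase. This holds because $w$ is real, so $w-(\overline{w/2+s})=w/2-\bar s$.
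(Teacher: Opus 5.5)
Your proposal is correct and is essentially the paper's own argument: center at the symmetry point via $F(u)=\Xi(w/2+u)$, check $F=\epsilon\J F$ and $\Zset(F)\subseteq S_h$, identify the sample with $B_{d,\delta}[F^{(m)}]$, invoke Corollary~\ref{cor:scaled}, and handle the nonpolynomial case by noting that no derivative can then vanish identically. The $\delta$-scaling concern you raise for the multiplier statement does not actually need checking, because Proposition~\ref{prop:multiplier-criterion} applies to any vector $\lambda$, and here its symbol is $B_{d,\delta}[F^{(m)}]$ itself.
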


\begin{proof}
Set $F(u)=\Xi(w/2+u)$.  Then $F=\epsilon\J F$ and
$\Zset(F)\subseteq S_h$, while~\eqref{eq:general-L-sampling} is
$B_{d,\delta}[F^{(m)}]$.  Corollary~\ref{cor:scaled} proves the result.  If a
derivative of $\Xi$ were identically zero, then $\Xi$ would be a polynomial.
\end{proof}

\subsection{Riemann's \texorpdfstring{$\xi$}{xi}-function}

Let
\[
 \xi(s)=\frac12s(s-1)\pi^{-s/2}\Gamma(s/2)\zeta(s),
 \qquad
 X_{d,m}(z)=\sum_{j=0}^d\binom dj
 \xi^{(m)}\!\left(j-\frac{d-1}{2}\right)z^j.
\]

\begin{corollary}[Strict Riemann $\xi$-sampling]
\label{cor:strict-xi-sampling}
For every $d\ge2$ and $m\ge0$, $X_{d,m}$ has $d$ simple zeros on $\T$.  The zeros of $X_{d,m}$ and $X_{d,m+1}$ strictly cyclically interlace, and every pencil
\[
 X_{d,m}+itX_{d,m+1},\qquad t\in\R,
\]
has simple unit-circle zeros forming a monotone cyclic root flow.  No hypothesis on the zeros of $\zeta$ is used.
\end{corollary}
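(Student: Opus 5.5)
Set $w=1$, $\delta=1$, $F(u)=\xi(\tfrac12+u)$. Then $X_{d,m}(z)=B_{d,1}[F^{(m)}](z)$, since $\tfrac12+(j-\tfrac d2)=j-\tfrac{d-1}2$. The functional equation $\xi(s)=\xi(1-s)$ and the reality of $\xi$ on $\R$ give $\xi(s)=\overline{\xi(1-\bar s)}$. Hence $F=\J F$, and $F$ is balanced with $\omega=1$. The function $\xi$ is entire of order one and not a polynomial, so no derivative $F^{(m)}$ vanishes identically.

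For the zero strip, the Euler product gives $\zeta(s)\neq0$ for $\re s>1$, with $\Gamma(s/2)\ne0$. The factor $s(s-1)$ cancels the poles at $0$ and $1$, and the Hadamard--de la Vallée Poussin nonvanishing on $\re s=1$ handles that line. The functional equation then places all zeros of $\xi$ in $0<\re s<1$. So $\Zset(F)\subseteq S_h$ with $h=\tfrac12$. For every $d\ge2$ we have $\tfrac12<\sqrt d/2$, so the strict hypothesis $h<\delta\sqrt d/2$ of Theorem~\ref{thm:strict-sampling} holds. Even the crude bound $h\le\tfrac12$ from the closed strip $0\le\re s\le1$ suffices, because $\sqrt2/2>1/2$. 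No zero information beyond the Euler product is used.

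The main step is the nondegeneracy condition \eqref{eq:strict-nondegeneracy}: for every $\zeta\in\T$ and $m\ge0$ we need $\mathscr T_{\zeta,1}^{\,d}F^{(m)}\not\equiv0$. Expanding,
\[
 \mathscr T_{\zeta,1}^{\,d}F^{(m)}(s)=\sum_{j=0}^d\binom dj\zeta^j F^{(m)}\!\left(s+j-\tfrac d2\right).
\]
The plan is to evaluate this at real $s=x\to+\infty$ and show that the top term $j=d$ dominates.

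On the real axis, $\xi(\sigma)$ grows like $\exp(\tfrac12\sigma\log\sigma\,(1+o(1)))$ by Stirling, since $\zeta(\sigma)\to1$. The same growth holds for each derivative, because the logarithmic derivative $\psi(\sigma)\sim\tfrac12\log\sigma\to\infty$ gives $\xi^{(m)}(\sigma)\sim\xi(\sigma)(\tfrac12\log\sigma)^m$. Therefore
\[
 \frac{F^{(m)}(x+j-d/2)}{F^{(m)}(x+d/2)}\to0 \qquad\text{for } j<d,
\]
since each unit shift multiplies the size by roughly $\sigma^{1/2}\to\infty$. It follows that $\mathscr T^{\,d}F^{(m)}(x)\sim\zeta^dF^{(m)}(x+d/2)\neq0$ for large $x$, uniformly in $\zeta\in\T$ because $|\zeta^j|=1$. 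This rules out identical vanishing.

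I expect the real work to be making the asymptotic $\xi^{(m)}/\xi\sim(\tfrac12\log\sigma)^m$ rigorous. I would get it from Cauchy's estimate on a disk of radius $1$ around $\sigma$, combined with Stirling for $\log\xi$ in that disk. Alternatively, since $\xi(\sigma)$ is a positive, log-convex, super-exponentially growing function, a cruder ratio bound $F^{(m)}(x+1)/F^{(m)}(x)\to\infty$ would suffice.

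With balancedness, the strict strip, and nondegeneracy in hand, Theorem~\ref{thm:strict-sampling} gives the conclusions. It yields simplicity and unit-circle location of the zeros of $X_{d,m}$, strict cyclic interlacing of the zeros of $X_{d,m}$ and $X_{d,m+1}$, and simple zeros with monotone projective root branches for the pencil $X_{d,m}+itX_{d,m+1}$.
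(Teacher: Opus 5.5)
Your proposal is correct and follows essentially the same route as the paper. You reduce to Theorem~\ref{thm:strict-sampling} with $F(u)=\xi(\tfrac12+u)$, $h=\tfrac12$, $\delta=1$, and you verify the nondegeneracy condition by showing, via Stirling at the right edge, that the top shift $j=d$ dominates $\mathscr T_{\zeta,1}^{\,d}F^{(m)}(x)$ uniformly in $\zeta\in\T$. As you note yourself, the de la Vall\'ee Poussin input is unnecessary, since the closed strip $0\le\re s\le1$ already suffices.
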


\begin{proof}
The function $\xi$ is a nonpolynomial entire function of order one, satisfies $\xi(s)=\xi(1-s)$, and has all zeros in $0\le\re s\le1$.  The location assertion follows from Theorem~\ref{thm:functional-equation-sampling} with $w=1$, $h=1/2$, and $\delta=1$.

For strictness, put $F_m(u)=\xi^{(m)}(1/2+u)$.  Stirling's formula gives, for fixed real $a,b$,
\[
 \frac{F_m(x+a)}{F_m(x+b)}
 =\left(\frac{x}{2\pi}\right)^{(a-b)/2}(1+o(1))
 \qquad(x\to+\infty).
\]
Expanding the commuting translations gives
\[
 \mathscr T_{\zeta,1}^{\,r}F_m(x)
 =\sum_{j=0}^{r}\binom rj\zeta^j
 F_m\!\left(x+j-\frac r2\right).
\]
The term $j=r$ has the largest shift.  Dividing by
$\zeta^rF_m(x+r/2)$ and using the displayed ratio for the finitely many shifts
$j-r$ gives, uniformly for $\zeta\in\T$,
\[
 \frac{\mathscr T_{\zeta,1}^{\,r}F_m(x)}
 {\zeta^rF_m(x+r/2)}
 =1+O_{m,r}(x^{-1/2}).
\]
Hence the iterate is nonzero for every sufficiently large $x$ and cannot vanish
identically.  In particular, the case $r=0$ shows $F_m\not\equiv0$, as required
in Theorem~\ref{thm:strict-sampling}.  That theorem now gives all strict
conclusions.
\end{proof}

\subsection{Primitive newforms and derivative period polynomials}
\label{sec:newforms}

Let
\[
 f(\tau)=\sum_{n\ge1}a_f(n)e^{2\pi in\tau}
 \in S_k^{\mathrm{new}}(\Gamma_0(N),\chi)
\]
be a normalized primitive holomorphic newform of weight $k\ge4$, level $N$, and
nebentypus $\chi$.  Write $\bar f$ for the newform obtained by conjugating the
Fourier coefficients, and define
\begin{equation}
 L(f,s):=\sum_{n\ge1}\frac{a_f(n)}{n^s},
 \qquad
 \Lambda(f,s):=\left(\frac{\sqrt N}{2\pi}\right)^s\Gamma(s)L(f,s).
 \label{eq:completed-L}
\end{equation}
The completed function is entire of order one and satisfies
\begin{equation}
 \Lambda(f,s)=\eps_f\Lambda(\bar f,k-s)
 =\eps_f\overline{\Lambda(f,k-\bar s)},
 \qquad |\eps_f|=1.
 \label{eq:functional-equation}
\end{equation}
These normalizations agree with Liu--Park--Song \cite{LiuParkSong}.  We use the
standard analytic theory of primitive holomorphic newform $L$-functions; see, for
example, Miyake~\cite{Miyake} and Iwaniec--Kowalski~\cite{IwaniecKowalski}.
Deligne's Ramanujan--Petersson bound~\cite{Deligne} makes the Euler product
absolutely convergent and nonvanishing for
$\re s>(k+1)/2$; the functional equation then gives
\begin{equation}
 \Zset(\Lambda(f,\cdot))
 \subseteq\left\{s:\left|\re s-\frac k2\right|\le\frac12\right\}.
 \label{eq:L-zero-strip}
\end{equation}
Set
\begin{equation}
 F_f(s):=\Lambda\!\left(f,\frac k2+s\right),
 \qquad d:=k-2.
 \label{eq:Ff}
\end{equation}
Then $F_f=\eps_f\J F_f$ and $\Zset(F_f)\subseteq S_{1/2}$.

\begin{theorem}[Circular critical-value multipliers]
\label{thm:newform-main}
For every $m\ge0$, define
\begin{equation}
 U_{f,m}(z):=\sum_{j=0}^{k-2}\binom{k-2}{j}\Lambda^{(m)}(f,j+1)z^j.
 \label{eq:Ufm}
\end{equation}
Then $U_{f,m}\in\mathcal C_{k-2}$.  More strongly, its critical-value vector is a
degree-$(k-2)$ circular multiplier sequence.
\end{theorem}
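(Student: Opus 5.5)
The plan is to deduce Theorem~\ref{thm:newform-main} directly from Theorem~\ref{thm:functional-equation-sampling} (equivalently Corollary~\ref{cor:scaled}) applied to $\Xi(s)=\Lambda(f,s)$. We take $w=k$, $\delta=1$, $d=k-2$ and $h=1/2$. With these choices the sampling points are
\[
 \frac w2+\left(j-\frac d2\right)=\frac k2+j-\frac{k-2}2=j+1,\qquad 0\le j\le k-2,
\]
so $P^{(k-2,1)}_{\Xi,m}=U_{f,m}$ identically. Equivalently, $U_{f,m}=B_{k-2}[F_f^{(m)}]$ with $F_f$ as in~\eqref{eq:Ff}, since $F_f^{(m)}(s)=\Lambda^{(m)}(f,k/2+s)$.

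Next I will verify the hypotheses one at a time.
\begin{enumerate}
\item \emph{Entire of order at most one.} $\Lambda(f,s)$ is entire of order one; this is standard for primitive newforms.
\item \emph{Functional equation.} The relation~\eqref{eq:functional-equation} is exactly~\eqref{eq:general-functional-equation} with $\epsilon=\eps_f$ and $w=k$. Equivalently, $F_f=\eps_f\J F_f$.
\item \emph{Zero strip.} The strip~\eqref{eq:L-zero-strip} comes from Deligne's bound. The Euler product converges absolutely and has no zeros for $\re s>(k+1)/2$, and $\Gamma(s)$ has no zeros, so $\Lambda(f,s)\neq0$ there. Reflecting through $s\mapsto k-\bar s$ removes $\re s<(k-1)/2$. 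Hence $\Zset(F_f)\subseteq S_{1/2}$.
\item \emph{Width condition.} Since $k\ge4$ gives $d\ge2$, we have $h=1/2\le\sqrt{k-2}/2=\delta\sqrt d/2$.
\end{enumerate}

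It remains to check that $\Xi^{(m)}\not\equiv0$ for every $m$. This holds because $\Lambda(f,s)$ is not a polynomial: the factor $\Gamma(s)$ grows faster than any polynomial along the positive real axis, while $L(f,\sigma)\to a_f(1)=1$ as $\sigma\to+\infty$. Therefore the final clause of Theorem~\ref{thm:functional-equation-sampling} applies for all $m\ge0$.

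That theorem then gives $U_{f,m}\in\mathcal C_{k-2}$. The multiplier statement follows from~\eqref{eq:MFd-conv} together with Proposition~\ref{prop:multiplier-criterion}: the diagonal operator with symbol $U_{f,m}$ maps $\mathcal C_{k-2}$ into itself.

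There is no serious obstacle here. The one point that needs care is the strip~\eqref{eq:L-zero-strip}, and specifically the boundary line $\re s=(k+1)/2$. The closed strip $|\re s-k/2|\le 1/2$ already allows zeros on that line, and strict nonvanishing is not needed because $h=1/2<\sqrt{k-2}/2$. Even in the boundary case $k=4$, with $h=1/2<1/\sqrt2$, the weak inequality suffices.
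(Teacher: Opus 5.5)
Your proposal is correct and is essentially the paper's own proof. You identify $U_{f,m}$ with the centered sample $B_{k-2}[F_f^{(m)}]$, i.e.\ Theorem~\ref{thm:functional-equation-sampling} with $w=k$, $\delta=1$, $h=1/2$. You then exclude identically vanishing derivatives using gamma growth and $L(f,\sigma)\to1$, and apply that theorem, whose circular-multiplier clause comes from Proposition~\ref{prop:multiplier-criterion}.
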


\begin{proof}
For $0\le j\le d$,
\[
 F_f^{(m)}\!\left(j-\frac d2\right)=\Lambda^{(m)}(f,j+1).
\]
No derivative is identically zero, since otherwise $\Lambda(f,\cdot)$ would be a
polynomial, contradicting Stirling growth and $L(f,\sigma)\to1$.  Apply
Theorem~\ref{thm:functional-equation-sampling}.
\end{proof}

For strictness, write
\[
 F_m(s):=\Lambda^{(m)}\!\left(f,\frac k2+s\right),
 \qquad
 \mathscr T_\zeta:=\mathscr T_{\zeta,1}.
\]

\begin{lemma}[Gamma-growth nondegeneracy for newforms]
\label{lem:gamma-growth-nondegeneracy}
For every $m,r\ge0$ and every $\zeta\in\T$,
\[
 \mathscr T_\zeta^{\,r}F_m\not\equiv0.
\]
\end{lemma}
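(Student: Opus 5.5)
We follow the right-edge asymptotic argument already used for Riemann's $\xi$-function in Corollary~\ref{cor:strict-xi-sampling}, with Stirling's formula for $\Gamma(s)$ and the Dirichlet series behavior $L(f,\sigma)\to1$ replacing the $\xi$-asymptotics. The first step is to expand the commuting translations,
\[
 \mathscr T_\zeta^{\,r}F_m(x)=\sum_{j=0}^r\binom rj\zeta^jF_m\!\left(x+j-\frac r2\right),
\]
and to show that for real $x\to+\infty$ the term $j=r$ dominates uniformly in $\zeta\in\T$.

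For the ratio asymptotics, write $\Lambda(f,s)=e^{g(s)}$ for real $s$ large, where
\[
 g(s)=s\log\frac{\sqrt N}{2\pi}+\log\Gamma(s)+\log L(f,s).
\]
Since $|a_f(n)|\ll_\varepsilon n^{(k-1)/2+\varepsilon}$, the function $L(f,s)-1$ and all its derivatives decay exponentially as $s\to+\infty$ along the reals. Stirling's formula gives $g'(s)=\log s+O(1)$ and $g^{(i)}(s)=O(s^{1-i})$ for $i\ge2$. By Fa\`a di Bruno,
\[
 \Lambda^{(m)}(f,s)=\Lambda(f,s)\bigl(g'(s)^m+O_m((\log s)^{m-1}s^{-1})\bigr)=\Lambda(f,s)(\log s)^m(1+o(1)),
\]
and in particular $F_m(x)\neq0$ for large real $x$. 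Hence, for fixed real $a>b$,
\[
 \frac{F_m(x+a)}{F_m(x+b)}=\frac{\Gamma(k/2+x+a)}{\Gamma(k/2+x+b)}\Bigl(\frac{\sqrt N}{2\pi}\Bigr)^{a-b}(1+o(1))\asymp_N x^{a-b}\to\infty.
\]
For shifts differing by integers $j-r\le-1$, this gives $F_m(x+j-r/2)/F_m(x+r/2)=O(x^{-1})$.

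Dividing the expansion by $\zeta^rF_m(x+r/2)$ then yields
\[
 \frac{\mathscr T_\zeta^{\,r}F_m(x)}{\zeta^rF_m(x+r/2)}=1+O_{f,m,r}(x^{-1}),
\]
uniformly in $\zeta\in\T$, because $|\zeta|=1$. So the iterate is nonzero at all large real $x$ and cannot vanish identically. The case $r=0$ is covered by the same estimate. Note that only nonvanishing is needed here, so dependence of the constants on $N$ and $f$ is harmless.

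The main obstacle is making the derivative asymptotics rigorous, that is, justifying that differentiation does not destroy the ratio estimate. I would handle this by the logarithmic-derivative expansion above, using the fact that $\psi(s)=\log s+O(1/s)$ and its derivatives are controlled by Stirling. An alternative is a Cauchy-integral estimate on small circles around $x$, where the $\Gamma$-ratio asymptotics hold uniformly.
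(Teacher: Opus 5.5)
Your proposal is correct and follows essentially the same route as the paper: you expand $\mathscr T_\zeta^{\,r}F_m(x)$ into shifts, use Stirling and $L(f,\sigma)\to1$ to get $F_m(x+a)/F_m(x+b)\asymp x^{a-b}$, and conclude that the top shift $j=r$ dominates uniformly in $\zeta\in\T$. The only difference is cosmetic: you apply Fa\`a di Bruno to $\log\Lambda$ as a whole, whereas the paper applies Leibniz to $G\cdot L$ together with the Bell-polynomial expansion of $G^{(\ell)}/G$, and the two are equivalent.
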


\begin{proof}
Put $A=\sqrt N/(2\pi)$ and
\[
 G(s):=A^s\Gamma(s),
 \qquad \Lambda(f,s)=G(s)L(f,s).
\]
We first record the asymptotic with enough uniformity for the finitely many shifts
that occur below.  For each fixed integer $q\ge0$, absolute convergence of the
Dirichlet series in a right half-plane gives
\begin{equation}
 L(f,\sigma)=1+o(1),
 \qquad L^{(q)}(f,\sigma)=o(1)\quad(q\ge1)
 \qquad(\sigma\to+\infty).
 \label{eq:newform-L-right-edge}
\end{equation}
The error is uniform when $\sigma$ is replaced by $\sigma+a$ with $a$ in any fixed
finite set.  On the other hand, the complete Bell-polynomial formula for derivatives
of $G$ and the standard polygamma estimates imply, for every fixed $\ell\ge0$,
\begin{equation}
 \frac{G^{(\ell)}(\sigma)}{G(\sigma)}
 =\bigl(\log(A\sigma)\bigr)^\ell(1+o(1)).
 \label{eq:newform-gamma-derivative-edge}
\end{equation}
Indeed,
\[
 \psi(\sigma)=\log\sigma+O(\sigma^{-1}),
 \qquad
 \psi^{(j)}(\sigma)=O_j(\sigma^{-j})\quad(j\ge1).
\]
The leading Bell monomial is $(\log A+\psi(\sigma))^\ell$; every other
monomial contains at least one derivative $\psi^{(j)}$ with $j\ge1$ and is
$o((1+\log\sigma)^\ell)$.  Again, the estimate is uniform over a fixed finite
set of additive shifts.

Leibniz' rule, \eqref{eq:newform-L-right-edge}, and
\eqref{eq:newform-gamma-derivative-edge} now give, for fixed real $a$,
\begin{equation}
 F_m(x+a)
 =G(x+a+k/2)\bigl(\log(Ax)\bigr)^m(1+o(1))
 \qquad(x\to+\infty).
 \label{eq:newform-right-growth}
\end{equation}
For $m=0$ the logarithmic factor is interpreted as $1$.  Combining this with the
fixed-shift gamma-ratio formula yields, uniformly for $a,b$ in any prescribed finite
set,
\begin{equation}
 \frac{F_m(x+a)}{F_m(x+b)}
 =(Ax)^{a-b}(1+o(1)).
 \label{eq:newform-shift-ratio}
\end{equation}
In particular, all denominators in this formula are nonzero for sufficiently large
$x$.

For $r\ge0$, the commuting translations give the exact expansion
\begin{equation}
 \mathscr T_\zeta^{\,r}F_m(x)
 =\sum_{j=0}^{r}\binom rj\zeta^j
 F_m\!\left(x+j-\frac r2\right).
 \label{eq:T-zeta-binomial-expansion}
\end{equation}
The term $j=r$ has the largest shift.  Dividing by that term and applying
\eqref{eq:newform-shift-ratio} to the finite set
$\{-r/2,-r/2+1,\ldots,r/2\}$ gives
\begin{align}
 \frac{\mathscr T_\zeta^{\,r}F_m(x)}
 {\zeta^rF_m(x+r/2)}
 &=1+\sum_{j=0}^{r-1}\binom rj\zeta^{j-r}
 \frac{F_m(x+j-r/2)}{F_m(x+r/2)}\notag\\
 &=1+O_{f,m,r}(x^{-1}).
 \label{eq:T-zeta-dominant-shift}
\end{align}
The last error is uniform for $\zeta\in\T$, since $|\zeta|=1$; for $r=0$ the sum is
empty.  Thus the quotient in~\eqref{eq:T-zeta-dominant-shift} tends to $1$, so the
iterate is nonzero for every sufficiently large real $x$.  It therefore cannot be
the zero entire function.
\end{proof}

\begin{theorem}[Global simplicity, interlacing, and pencil flow]
\label{thm:global-newform-interlacing}
For every primitive holomorphic newform of weight $k\ge4$, arbitrary level and
nebentypus, and every $m\ge0$, the following hold.
\begin{enumerate}
\item $U_{f,m}$ has $k-2$ simple zeros on $\T$.
\item The zeros of $U_{f,m}$ and $U_{f,m+1}$ strictly cyclically interlace.
\item Every $U_{f,m}+itU_{f,m+1}$, $t\in\R$, has simple zeros on $\T$, and these
zeros form a monotone cyclic root flow.
\end{enumerate}
\end{theorem}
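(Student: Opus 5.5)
The plan is to deduce the theorem from Theorem~\ref{thm:strict-sampling}, applied with $F=F_f$, $d=k-2$, $\delta=1$ and $h=1/2$, and with $\omega=\eps_f$. We first check the standing hypotheses. By \eqref{eq:completed-L} and \eqref{eq:functional-equation}, $F_f$ is entire of order at most one and satisfies $F_f=\eps_f\J F_f$. By \eqref{eq:L-zero-strip}, its zeros lie in $S_{1/2}$. The strict inequality $h<\delta\sqrt d/2$ reads $1/2<\sqrt{k-2}/2$, which holds exactly when $k\ge4$, since then $d\ge2>1$. As in the proof of Theorem~\ref{thm:newform-main}, no derivative $F_f^{(m)}$ vanishes identically. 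The identity $F_f^{(m)}(j-d/2)=\Lambda^{(m)}(f,j+1)$ gives $B_{d,1}[F_f^{(m)}]=U_{f,m}$ for every $m\ge0$, and likewise $B_{d,1}[F_f^{(m+1)}]=U_{f,m+1}$.

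The second step verifies condition (a) of Theorem~\ref{thm:strict-sampling}. We need $\mathscr T_{\zeta,1}^{\,d}F_f^{(m)}\not\equiv0$ for every $\zeta\in\T$. Since $F_f^{(m)}=F_m$ in the notation preceding Lemma~\ref{lem:gamma-growth-nondegeneracy}, this is that lemma with $r=d$. Hence condition (b) holds: $U_{f,m}$ and $U_{f,m+1}$ are coprime.

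The three conclusions of Theorem~\ref{thm:strict-sampling} then give the three assertions directly. Conclusion (1) yields simple zeros of $U_{f,m}$ on $\T$, and also strict cyclic interlacing with $U_{f,m+1}$. There are exactly $k-2$ zeros because $U_{f,m}$ has degree $d$, as $\mathcal C_d$ membership already records. Conclusion (2) gives simple unit-circle zeros of every pencil member $U_{f,m}+itU_{f,m+1}$. Conclusion (3) gives the monotone cyclic root flow via the Wronskian formula \eqref{eq:root-flow-derivative}.

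There is no real obstacle here. The only delicate points are already settled earlier: the nondegeneracy is handled by Lemma~\ref{lem:gamma-growth-nondegeneracy}, and the strictness $h<\sqrt d/2$ requires $k\ge4$. One should also confirm that the lemma's hypothesis covers every $\zeta\in\T$, including $\zeta=-1$. The uniformity over $\zeta$ in \eqref{eq:T-zeta-dominant-shift} does cover this, since the dominant term $\zeta^rF_m(x+r/2)$ never cancels.
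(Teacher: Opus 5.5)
Your proposal is correct and follows the paper's proof: apply Theorem~\ref{thm:strict-sampling} with $d=k-2$, $\delta=1$, $h=1/2<\sqrt{d}/2$, and verify the nondegeneracy condition through Lemma~\ref{lem:gamma-growth-nondegeneracy}. The only difference is cosmetic. The paper gets $F_m\not\equiv0$ from the case $r=0$ of that lemma, whereas you cite the polynomial-growth argument from Theorem~\ref{thm:newform-main}; either works.
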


\begin{proof}
Here $h=1/2$, $\delta=1$, and $d=k-2\ge2$, so
$h<\sqrt d/2$.  Lemma~\ref{lem:gamma-growth-nondegeneracy}, applied also with
$r=0$, shows that $F_m\not\equiv0$ and verifies the nondegeneracy hypothesis of
Theorem~\ref{thm:strict-sampling}, which gives all three claims.
\end{proof}

\begin{corollary}[Endpoint and two-order inequalities]
\label{cor:newform-endpoint-turan}
Put
\[
 a_j=\Lambda^{(m)}(f,j+1),
 \qquad b_j=\Lambda^{(m+1)}(f,j+1).
\]
For $1\le j\le k-3$ and every $t\in\R$,
\begin{equation}
 |a_j+itb_j|<|a_0+itb_0|.
 \label{eq:pencil-strict-domination}
\end{equation}
In particular,
\begin{equation}
 |\Lambda^{(m)}(f,j+1)|
 <|\Lambda^{(m)}(f,1)|
 =|\Lambda^{(m)}(f,k-1)|,
 \label{eq:newform-strict-endpoint-inequality}
\end{equation}
and
\begin{align}
 &\left(\im(a_0\overline{b_0})-\im(a_j\overline{b_j})\right)^2\notag\\
 &\qquad<
 \left(|a_0|^2-|a_j|^2\right)
 \left(|b_0|^2-|b_j|^2\right).
 \label{eq:Turan-two-order}
\end{align}
\end{corollary}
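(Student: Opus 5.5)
The plan is to read the inequalities off the coefficients of the pencil polynomial
\[
 P_t:=U_{f,m}+itU_{f,m+1}=\sum_{j=0}^{d}\binom dj\,(a_j+itb_j)\,z^j,
 \qquad d=k-2\ge2,
\]
using the root information from Theorem~\ref{thm:global-newform-interlacing}(3): for every $t\in\R$, $P_t$ has $d$ simple zeros $z_1,\dots,z_d$ on $\T$. In particular $\deg P_t=d$, so the leading coefficient $c_t:=a_d+itb_d$ is nonzero. Writing $P_t=c_t\prod_i(z-z_i)$ gives
\[
 \binom dj(a_j+itb_j)=(-1)^{d-j}c_t\,e_{d-j}(z_1,\dots,z_d).
\]
The constant term gives $|a_0+itb_0|=|c_t|\prod|z_i|=|c_t|$. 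This is the endpoint equality $|a_0+itb_0|=|a_d+itb_d|$; at $t=0$ it is $|\Lambda^{(m)}(f,1)|=|\Lambda^{(m)}(f,k-1)|$, which also follows directly from \eqref{eq:functional-equation}.

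The key step is the strict bound $|e_r(z_1,\dots,z_d)|<\binom dr$ for $1\le r\le d-1$ when the $z_i$ are distinct points of $\T$. The weak bound is the triangle inequality, since $e_r$ is a sum of $\binom dr$ unimodular monomials $z_S=\prod_{i\in S}z_i$. For strictness, equality would force all $z_S$ with $|S|=r$ to be equal. Because $1\le r\le d-1$, for any two indices $a\ne b$ we can choose $S\ni a$ with $b\notin S$ and set $S'=(S\setminus\{a\})\cup\{b\}$. Then $z_S=z_{S'}$ forces $z_a=z_b$, which contradicts simplicity. With $r=d-j$, this yields
\[
 |a_j+itb_j|<|c_t|=|a_0+itb_0|\qquad(1\le j\le k-3),
\]
which is \eqref{eq:pencil-strict-domination}. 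Taking $t=0$ gives \eqref{eq:newform-strict-endpoint-inequality}. Running the same argument with $m+1$ in place of $m$ gives $|b_j|<|b_0|$.

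For the two-order inequality we expand. For $a,b\in\C$ and $t\in\R$ one has $|a+itb|^2=|a|^2+t^2|b|^2+2t\,\im(a\bar b)$. The strict domination therefore says that the real quadratic
\[
 (|b_0|^2-|b_j|^2)t^2+2t\bigl(\im(a_0\overline{b_0})-\im(a_j\overline{b_j})\bigr)+(|a_0|^2-|a_j|^2)
\]
is positive for every $t\in\R$. Its leading coefficient is positive by the endpoint inequality for order $m+1$. A positive-definite real quadratic has negative discriminant, and that is exactly \eqref{eq:Turan-two-order}.

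The only genuinely delicate point is the strictness of the symmetric-function bound. It needs both the simplicity of the pencil zeros, supplied by Theorem~\ref{thm:global-newform-interlacing}, and the exclusion of the endpoint indices $j=0$ and $j=d$. Everything else is bookkeeping with coefficients.
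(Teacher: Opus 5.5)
Your proposal is correct and follows the paper's proof essentially step for step. You apply the strict triangle-inequality bound on elementary symmetric sums of distinct unimodular roots to every pencil $U_{f,m}+itU_{f,m+1}$, use the simple roots of $U_{f,m+1}$ to get the positive leading coefficient $|b_0|^2-|b_j|^2$, and conclude \eqref{eq:Turan-two-order} from the negative discriminant. Your exchange argument $S\mapsto (S\setminus\{a\})\cup\{b\}$ simply supplies the detail behind the paper's remark that equality in the triangle inequality would force all roots to coincide.
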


\begin{proof}
If $P(z)=\sum_{j=0}^d\binom djc_jz^j$ has simple zeros on $\T$, then
$|c_j|<|c_0|=|c_d|$ for $1\le j\le d-1$: coefficient comparison expresses
$\binom djc_j$ as an elementary symmetric sum of the unimodular roots, and equality
in the triangle inequality would force all roots to coincide.  Apply this observation
to every pencil $U_{f,m}+itU_{f,m+1}$, which has simple unit-circle zeros by
Theorem~\ref{thm:global-newform-interlacing}.  This gives
\eqref{eq:pencil-strict-domination} and then~\eqref{eq:newform-strict-endpoint-inequality}.
Applying the same coefficient comparison to $U_{f,m+1}$ gives
$|b_j|<|b_0|$ for $1\le j\le d-1$.  Thus, after squaring
\eqref{eq:pencil-strict-domination}, the difference
\[
 |a_0+itb_0|^2-|a_j+itb_j|^2
\]
is a real quadratic in $t$ with positive leading coefficient
$|b_0|^2-|b_j|^2$.  Since it is positive for every real $t$, its
discriminant is negative, which is exactly~\eqref{eq:Turan-two-order}.
\end{proof}

Define the reciprocal-rotation normalization
\begin{equation}
 Q_{f,m}^{\mathrm{norm}}(z)
 :=\sum_{j=0}^{k-2}\binom{k-2}{j}i^{1-j}
 \Lambda^{(m)}(f,j+1)z^{k-2-j}.
 \label{eq:Qnorm}
\end{equation}
Then
\begin{equation}
 Q_{f,m}^{\mathrm{norm}}(z)
 =iz^{k-2}U_{f,m}\!\left(\frac1{iz}\right).
 \label{eq:reciprocal-rotation}
\end{equation}
Put
\begin{equation}
 R_{f,m}^{(N)}(X)
 :=(-1)^{k-2}N^{-(k-1)/2}Q_{f,m}^{\mathrm{norm}}(\sqrt N X).
 \label{eq:Rfm}
\end{equation}

\begin{corollary}[Conductor-rescaled derivative period polynomials]
\label{cor:conductor-rescaled}
Every $Q_{f,m}^{\mathrm{norm}}$ has simple zeros on $\T$, and consecutive derivative
orders strictly cyclically interlace.  Every $R_{f,m}^{(N)}$ has simple zeros on
$|X|=N^{-1/2}$, with the same interlacing and pencil-flow conclusions.

For $m=0$, $R_{f,0}^{(N)}$ is the classical period polynomial
\[
 r_f(X)=\int_0^{i\infty}f(\tau)(\tau-X)^{k-2}\,d\tau.
\]
\end{corollary}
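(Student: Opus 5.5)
The plan splits into two parts. The first three assertions are transported from Theorem~\ref{thm:global-newform-interlacing} through the algebraic identity \eqref{eq:reciprocal-rotation} and the rescaling \eqref{eq:Rfm}. The classical identification for $m=0$ is a Mellin computation.

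\emph{Step 1 (transport).} By \eqref{eq:reciprocal-rotation}, $z_0\ne0$ is a zero of $Q_{f,m}^{\mathrm{norm}}$ exactly when $1/(iz_0)$ is a zero of $U_{f,m}$. The map $z\mapsto 1/(iz)$ is an anti-orientation-preserving bijection of $\T$ onto itself; it reverses cyclic order, which does not affect alternation. The multiplicity of a nonzero root is unchanged. The constant term of $U_{f,m}$ is $\Lambda^{(m)}(f,1)\ne0$, because $U_{f,m}$ has $d$ unimodular roots, so the product of its roots is unimodular and the constant coefficient equals the leading one up to modulus. Hence $Q^{\mathrm{norm}}_{f,m}$ has degree exactly $k-2$ and has no root at $0$.

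Theorem~\ref{thm:global-newform-interlacing} then gives simple unit-circle zeros and strict cyclic interlacing for orders $m$ and $m+1$. Since the transformation is linear in the polynomial, the pencil $U_{f,m}+itU_{f,m+1}$ maps to the pencil $Q_{f,m}^{\mathrm{norm}}+itQ_{f,m+1}^{\mathrm{norm}}$. Its roots are the images of a monotone cyclic root flow under a fixed circle homeomorphism, so they are again monotone; only the direction may be reversed.

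For $R^{(N)}_{f,m}$, the dilation $X=z/\sqrt N$ sends $\T$ onto $|X|=N^{-1/2}$ and preserves simplicity, cyclic order and pencil structure. The nonzero scalar in front is irrelevant.

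\emph{Step 2 (classical period polynomial).} I expand $(\tau-X)^{k-2}$ binomially and substitute $\tau=it$. This gives
\[
 r_f(X)=\sum_{n=0}^{k-2}\binom{k-2}{n}(-X)^{k-2-n}\,i^{n+1}\int_0^\infty f(it)t^n\,dt .
\]
I then use the standard Mellin identity
\[
 \int_0^\infty f(it)t^{s-1}\,dt=(2\pi)^{-s}\Gamma(s)L(f,s)=N^{-s/2}\Lambda(f,s)
\]
with $s=n+1$. This produces
\[
 \sum_n\binom{k-2}{n}(-1)^{k-2-n}i^{n+1}N^{-(n+1)/2}\Lambda(f,n+1)X^{k-2-n}.
\]
Next I compare with \eqref{eq:Rfm}, expanding
\[
 (-1)^{k-2}N^{-(k-1)/2}\sum_j\binom{k-2}{j}i^{1-j}\Lambda(f,j+1)N^{(k-2-j)/2}X^{k-2-j}.
\]
The powers of $N$ match, since $-(k-1)/2+(k-2-j)/2=-(j+1)/2$. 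The unit factors also match, since $(-1)^{k-2}i^{1-j}=(-1)^{k-2-j}i^{1+j}$, which holds because $i^{1-j}=i^{1+j}(-1)^j$.

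The main obstacle is bookkeeping of signs and normalizations. The convention $\Lambda(f,s)=(\sqrt N/2\pi)^s\Gamma(s)L(f,s)$ must be consistent with the Mellin transform, which has no $\sqrt N$ in it. The convergence of $\int_0^{i\infty}$ for a cusp form is also needed to justify exchanging the sum and the integral. Both are standard, so I would just verify the exponent and unit identities carefully.
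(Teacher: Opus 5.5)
Your proposal is correct and follows the paper's proof: the location, simplicity, interlacing and pencil-flow statements are transported from Theorem~\ref{thm:global-newform-interlacing} through the reciprocal rotation \eqref{eq:reciprocal-rotation} and the radial rescaling \eqref{eq:Rfm}, and the $m=0$ identification is a Mellin computation followed by a comparison of coefficients. The only differences are minor: the paper quotes the Mellin formula from Liu--Park--Song and reindexes, whereas you carry out the binomial expansion and check the powers of $N$ and the unit factors directly; you also verify explicitly that $Q_{f,m}^{\mathrm{norm}}$ has exact degree $k-2$ and no zero at the origin, which the paper leaves implicit.
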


\begin{proof}
Reciprocal rotation and radial rescaling preserve multiplicities and cyclic interlacing, up to reversal of orientation.
For $m=0$, the standard Mellin calculation gives
\begin{equation}
 r_f(X)
 =i^{k-1}N^{-(k-1)/2}
 \sum_{j=0}^{k-2}\binom{k-2}{j}(\sqrt N\,iX)^j\Lambda(f,k-1-j),
 \label{eq:period-Mellin}
\end{equation}
see~\cite[(1.3)]{LiuParkSong}.  Reindexing~\eqref{eq:Qnorm} gives exactly
\eqref{eq:Rfm} after using $i^{2(k-2)}=(-1)^{k-2}$.
\end{proof}

\begin{corollary}[Diamantis--Rolen full-polynomial assertion]
\label{cor:DR-full}
Let $f$ be a normalized Hecke eigen cusp form on $\mathrm{SL}_2(\mathbb Z)$ of even
weight $k$.  For every $m\ge0$, the full derivative period polynomial
\[
 Q_{f,m}(z)=\sum_{j=0}^{k-2}\binom{k-2}{j}i^{1-j}
 \Lambda^{(m)}(f,j+1)z^{k-2-j}
\]
has simple unit-circle zeros.  The zeros for orders $m$ and $m+1$ strictly cyclically
interlace, and the corresponding pencil has a monotone cyclic root flow.
\end{corollary}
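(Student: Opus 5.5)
The plan is to derive this as a direct specialization of Theorem~\ref{thm:global-newform-interlacing} and Corollary~\ref{cor:conductor-rescaled}, so the main work is checking that the level-one setting fits those hypotheses. A normalized Hecke eigen cusp form on $\mathrm{SL}_2(\mathbb Z)$ of even weight $k$ is a primitive newform in $S_k^{\mathrm{new}}(\Gamma_0(1),\chi_0)$ with $N=1$ and trivial nebentypus. Multiplicity one holds at level one, so every normalized eigenform is automatically new. Cusp forms of level one exist only for $k\ge12$, so the requirement $k\ge4$ holds automatically. At $N=1$ the completion~\eqref{eq:completed-L} reduces to $(2\pi)^{-s}\Gamma(s)L(f,s)$, which is the normalization used by Diamantis and Rolen. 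The functional equation~\eqref{eq:functional-equation} then holds with $\eps_f=(-1)^{k/2}$.

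Next I compare the displayed $Q_{f,m}$ with~\eqref{eq:Qnorm}. They agree term by term, so $Q_{f,m}=Q_{f,m}^{\mathrm{norm}}$. By~\eqref{eq:reciprocal-rotation}, $Q_{f,m}(z)=iz^{k-2}U_{f,m}(1/(iz))$. The map $z\mapsto 1/(iz)$ sends $\T$ to itself bijectively and reverses orientation. One point needs care: the leading coefficient of $U_{f,m}$ is $\Lambda^{(m)}(f,k-1)$, and it must be nonzero so that $\deg Q_{f,m}$ and $\deg U_{f,m}$ agree and no zero escapes to the origin or to infinity. This follows from $U_{f,m}\in\mathcal C_{k-2}$ (Theorem~\ref{thm:newform-main}), since degree exactly $k-2$ means nonzero leading coefficient. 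The constant term is nonzero as well, by self-inversiveness (Lemma~\ref{lem:self-inversive}), or equivalently because $0\notin\T$.

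Hence the zeros of $Q_{f,m}$ are the images $1/(i\zeta)$ of the zeros $\zeta$ of $U_{f,m}$, with the same multiplicities. Theorem~\ref{thm:global-newform-interlacing}(1) then gives simple unit-circle zeros.

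For interlacing, I apply the same Möbius map to the zeros of $U_{f,m}$ and $U_{f,m+1}$. A circle homeomorphism preserves strict cyclic alternation, even when it reverses orientation, so Theorem~\ref{thm:global-newform-interlacing}(2) transfers directly. For the pencil, I use linearity:
\[
 Q_{f,m}+itQ_{f,m+1}=iz^{k-2}\bigl(U_{f,m}+itU_{f,m+1}\bigr)\!\left(\tfrac{1}{iz}\right).
\]
Each pencil member therefore corresponds to the matching $U$-pencil member. The root branches of the $Q$-pencil are the images of those of the $U$-pencil under a fixed orientation-reversing homeomorphism of $\T$, which preserves monotonicity of the cyclic flow, with the direction reversed. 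Theorem~\ref{thm:global-newform-interlacing}(3) gives the conclusion.

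The only step needing real care is this bookkeeping: exact agreement of the normalization with Diamantis--Rolen, including the factor $i^{1-j}$ and the reversed power $z^{k-2-j}$, and the nonvanishing of the extreme coefficients. Everything else is inherited from the general theorem.
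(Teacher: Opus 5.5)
Your proposal is correct and matches the paper. The paper gives no separate proof because this is simply the level-one, trivial-nebentypus case of Theorem~\ref{thm:global-newform-interlacing}, transported to $Q_{f,m}^{\mathrm{norm}}$ by the reciprocal rotation \eqref{eq:reciprocal-rotation} exactly as in Corollary~\ref{cor:conductor-rescaled}. One minor correction: newness at level one is automatic because there are no proper divisor levels to supply oldforms, not because of multiplicity one.
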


\begin{remark}
The full-polynomial assertion is thereby strengthened.  The odd-part assertion is a
separate problem and does not follow formally from circular sampling, since odd
projection is not a circular multiplier.  No odd-part input is used in this paper.
\end{remark}

\begin{remark}
Theorem~\ref{thm:global-newform-interlacing} also implies, by the strict boundary case of Gauss--Lucas, that every zero of $U_{f,m}'$ lies in the open unit disk.
\end{remark}

\section{Conductor-uniform quantitative localization and spacing}
\label{sec:quantitative-localization}

The preceding section proves simplicity and consecutive-order interlacing for
arbitrary level, nebentypus, weight, and derivative order, but gives no quantitative
control of the angular gaps.  We now prove Theorem~\ref{thm:intro-quantitative}.
The argument treats even and odd weights simultaneously and is uniform in the
conductor.  For derivative order zero the phase model is the classical one appearing
in~\cite{JinMaOnoSound}; in the arbitrary-nebentypus setting, see also~\cite{LiuParkSong},
where finitely many possible exceptional newforms are allowed.  Our purpose is to propagate this model through every
fixed derivative order and to compare two consecutive orders quantitatively.

Let
\[
 f\in S_k^{\mathrm{new}}(\Gamma_0(N),\chi)
\]
be a normalized primitive holomorphic newform, let $\mu\ge0$ be fixed, and write
\[
 d=k-2,\qquad D=\frac d2,\qquad J=\left\lfloor\frac d2\right\rfloor,
 \qquad a_N=\frac{2\pi}{\sqrt N}.
\]
Set
\[
 G_N(s):=a_N^{-s}\Gamma(s)
 =\left(\frac{\sqrt N}{2\pi}\right)^s\Gamma(s),
 \qquad
 A_{f,\mu}(s):=\frac{\Lambda^{(\mu)}(f,s)}{G_N(s)}.
\]
Choose $\omega_f\in\R$ with $\eps_f=e^{i\omega_f}$ and put
\[
 \eta_{f,\mu}:=(-1)^\mu\eps_f,
 \qquad
 \Gamma_{f,\mu}:=\frac{\omega_f+\mu\pi}{2}.
\]
Differentiating the functional equation gives
\begin{equation}
 \Lambda^{(\mu)}(f,s)
 =\eta_{f,\mu}\,
  \overline{\Lambda^{(\mu)}(f,k-\bar s)}.
 \label{eq:quantitative-derivative-FE}
\end{equation}

Define
\begin{equation}
 c_r:=A_{f,\mu}(k-1-r)\frac{a_N^r}{r!},
 \qquad 0\le r\le d,
 \label{eq:general-cr-definition}
\end{equation}
and
\begin{equation}
 \mathcal R_{f,\mu,k,N}(w):=\sum_{r=0}^{d}c_rw^r.
 \label{eq:general-R-definition}
\end{equation}
Reindexing~\eqref{eq:Qnorm} gives
\begin{equation}
 Q_{f,\mu}^{\mathrm{norm}}(z)
 =i^{1-d}d!\,a_N^{-(k-1)}
  \mathcal R_{f,\mu,k,N}(iz).
 \label{eq:general-Q-R-rotation}
\end{equation}
The functional equation also gives an exact conjugate reciprocity.  Indeed,
\[
 \frac{G_N(k-1-r)}{G_N(r+1)}
 =a_N^{-d+2r}\frac{(d-r)!}{r!},
\]
and substitution in~\eqref{eq:general-cr-definition} yields
\begin{equation}
 c_{d-r}=\eta_{f,\mu}\,\overline{c_r}.
 \label{eq:general-c-reciprocity}
\end{equation}

Define the right-edge polynomial by
\begin{equation}
 q_{f,\mu,k,N}(w)
 :=\sum_{0\le r<d/2}c_rw^r
 +\begin{cases}
   \frac12c_{d/2}w^{d/2},& d\ \text{even},\\
   0,&d\ \text{odd}.
  \end{cases}
 \label{eq:general-q-edge}
\end{equation}
Thus, when $d$ is odd, the sum in~\eqref{eq:general-q-edge} runs through
$r=0,\ldots,J$.  In both parities,~\eqref{eq:general-c-reciprocity} gives
\begin{equation}
 \mathcal R_{f,\mu,k,N}(w)
 =q_{f,\mu,k,N}(w)
 +\eta_{f,\mu}w^d
  \overline{q_{f,\mu,k,N}(1/\bar w)}.
 \label{eq:general-edge-decomposition}
\end{equation}

We first record the deliberately crude central estimate needed only to dispose of the
last coefficient in~\eqref{eq:general-q-edge}.

\begin{lemma}[Polynomial bound in the central strip]
\label{lem:general-central-polynomial-bound}
For every fixed $\mu\ge0$ there is a constant $C_\mu>0$ such that, uniformly over
all primitive $f$ as above and all real $s$ with
\[
 \frac k2\le s\le\frac{k+1}{2},
\]
one has
\begin{equation}
 |A_{f,\mu}(s)|\ll_\mu (Nk)^{C_\mu}.
 \label{eq:general-central-polynomial-bound}
\end{equation}
\end{lemma}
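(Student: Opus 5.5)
We need $|A_{f,\mu}(s)|\ll_\mu (Nk)^{C_\mu}$ for real $s\in[k/2,(k+1)/2]$, where $A_{f,\mu}=\Lambda^{(\mu)}(f,s)/G_N(s)$. The plan is to write $\Lambda^{(\mu)}=\sum_{\ell\le\mu}\binom\mu\ell G_N^{(\ell)}L^{(\mu-\ell)}(f,\cdot)$ by Leibniz. Then
\[
 A_{f,\mu}(s)=\sum_{\ell=0}^{\mu}\binom\mu\ell\frac{G_N^{(\ell)}(s)}{G_N(s)}L^{(\mu-\ell)}(f,s).
\]
This splits the bound into two polynomial estimates: one for the gamma ratio and one for derivatives of $L(f,s)$ in the edge of the critical strip.

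For the gamma ratio, $G_N^{(\ell)}/G_N$ is a complete Bell polynomial in $\log(\sqrt N/2\pi)+\psi(s)$ and $\psi^{(j)}(s)$, $j\ge1$. For real $s\ge k/2\ge2$ we have $|\psi(s)|\le\log s+1$ and $|\psi^{(j)}(s)|\ll_j1$. Hence $|G_N^{(\ell)}(s)/G_N(s)|\ll_\ell(\log(Nk))^\ell$, which is polynomial in $Nk$.

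The main obstacle is the $L$-derivative bound, since $s$ sits at the right edge $\re s=k/2+[0,1/2]$ of the critical strip, where the Dirichlet series does not converge absolutely. I would first bound $L$ in analytic normalization: $L(f,s)=L_0(s-(k-1)/2)$, with $L_0$ normalized so the critical line is $\re=1/2$, and $s-(k-1)/2\in[1/2,1]$.

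To pass from $L$ to $L^{(q)}$, use Cauchy's formula on a circle of fixed radius $1/4$:
\[
 |L^{(q)}(f,s)|\le q!\,4^q\max_{|w-s|=1/4}|L(f,w)|.
\]
So it suffices to bound $L_0(u)$ polynomially in $Nk$ for $u$ with $\re u\in[1/4,5/4]$ and $|\im u|\le1/4$.

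For $\re u\ge1+\eta$, Deligne's bound $|a_f(n)|\le d(n)n^{(k-1)/2}$ gives $|L_0(u)|\le\zeta(1+\eta)^2$. For $\re u\le-\eta$, the functional equation~\eqref{eq:functional-equation} gives
\[
 L_0(u)=\eps\,(\text{gamma and conductor ratio})\,\overline{L_0(1-\bar u)}.
\]
The ratio $\Gamma(1-\bar u+(k-1)/2)/\Gamma(u+(k-1)/2)\cdot N^{1/2-u}$ is $\ll(Nk)^{O(1)}$ by Stirling, uniformly for bounded $\im u$ and $\re u\in[-\eta-1,-\eta]$. Then apply Phragmén–Lindelöf in the strip $-\eta\le\re u\le1+\eta$. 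The required finite-order growth of $L_0$ in vertical strips comes from $\Lambda$ being entire of order one and $1/\Gamma$ being entire.

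One caveat: the boundary bounds grow in $|\im u|$ like $(Nk(1+|\im u|))^{O(1)}$. The cleanest route is the standard convexity bound $L_0(u)\ll(N(k+|\im u|)^2)^{(1-\re u)/2+\eta}$ for analytic conductor $N(k+|\im u|)^2$, as in Iwaniec–Kowalski~\cite{IwaniecKowalski}, Theorem 5.23. Citing that theorem for the region $\re u\ge1/4$ and bounded $\im u$ gives $|L(f,w)|\ll(Nk^2)^{1/2}$.

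Combining the three estimates, each term in the Leibniz sum is $\ll_\mu(\log Nk)^\mu(Nk)^{1}$. This yields~\eqref{eq:general-central-polynomial-bound} with, say, $C_\mu=2$. The implied constant depends only on $\mu$, since the convexity bound is uniform over newforms and nebentypus.
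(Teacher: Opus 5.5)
Your proposal is correct and takes essentially the same route as the paper. Both use the Leibniz expansion of $A_{f,\mu}$ into gamma ratios $G_N^{(\ell)}/G_N$, bounded by Bell polynomials and polygamma estimates, together with the conductor-uniform GL(2) convexity bound for $L\bigl(f,u+\tfrac{k-1}{2}\bigr)$ near $[1/2,1]$ (analytic conductor $\ll N(k+1)^2$) and Cauchy's formula on small circles for the derivatives. The differences are cosmetic: you use radius $1/4$ rather than the paper's $\rho/2$, and you choose the explicit exponent $C_\mu=2$ where the paper leaves it unspecified.
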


\begin{proof}
Put
\[
 \mathcal L_f(u):=L\!\left(f,u+\frac{k-1}{2}\right).
\]
This is the unitary normalization of the holomorphic newform $L$-function.  Fix
$0<\rho<1/4$.  On the closed $\rho$-neighborhood of $[1/2,1]$, the analytic
conductor of $\mathcal L_f$, including the finite and archimedean factors, is
$\ll_\rho N(k+1)^2$, uniformly in the level, nebentypus, and primitive newform.
The standard GL(2) convexity bound, obtained
from the functional equation and the Phragm\'en--Lindel\"of principle, gives the
following polynomial conductor estimate;
equivalently, it follows from the approximate functional equation together with
Deligne's bound.  Thus, for some $C_\rho>0$,
\[
 \sup_{\operatorname{dist}(u,[1/2,1])\le\rho}
 |\mathcal L_f(u)|
 \ll_\rho \bigl(N(k+1)^2\bigr)^{C_\rho},
\]
uniformly in the level, nebentypus, and newform.  Cauchy's formula on circles of
radius $\rho/2$ then gives, for every fixed $b\ge0$,
\begin{equation}
 \mathcal L_f^{(b)}(u)\ll_{b,\rho} (Nk)^{C_b}
 \qquad(1/2\le u\le1)
 \label{eq:unitary-polynomial-convexity}
\end{equation}
for some $C_b$ independent of $N$, $\chi$, and $f$; see, for example, the
convexity principle and approximate functional equation in
\cite[Chapter~5]{IwaniecKowalski}.  On the same interval, the Bell-polynomial
formula and the elementary polygamma estimates give
\[
 \frac{G_N^{(j)}(s)}{G_N(s)}
 \ll_j(1+\log(Nk))^j.
\]
Leibniz' rule for $\Lambda^{(\mu)}=(G_NL)^{(\mu)}$, together with
\eqref{eq:unitary-polynomial-convexity}, proves~\eqref{eq:general-central-polynomial-bound}.
Only polynomial dependence is used below; no optimized convexity exponent is needed.
\end{proof}

\begin{lemma}[Uniform conductor-dependent edge approximation]
\label{lem:general-uniform-edge-approximation}
Fix $\mu\ge0$.  Uniformly over all normalized primitive newforms of weight $k$,
level $N$, and nebentypus $\chi$, one has, for all sufficiently large $k$,
\begin{align}
 c_0&\ne0,
 \label{eq:general-c0-nonzero}\\
 |\arg c_0|&\ll_\mu 2^{-k/3},
 \label{eq:general-c0-phase}\\
 \sup_{|w|\le1}
 \left|\frac{q_{f,\mu,k,N}(w)}{c_0}-e^{a_Nw}\right|
 &\ll_\mu \frac{a_N}{k}.
 \label{eq:general-uniform-edge-approximation}
\end{align}
For sufficiently large $k$, the number $c_0$ lies in the open right half-plane;
throughout, $\arg c_0$ denotes its principal argument.
\end{lemma}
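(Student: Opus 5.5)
The plan is to expand $A_{f,\mu}$ by Leibniz' rule,
\[
 A_{f,\mu}(s)=\sum_{l=0}^{\mu}\binom{\mu}{l}\frac{G_N^{(\mu-l)}(s)}{G_N(s)}\,L^{(l)}(f,s),
\]
and then to treat the gamma part and the $L$-part separately. The gamma part is handled by the Bell-polynomial/polygamma expansion already used in Lemma~\ref{lem:gamma-growth-nondegeneracy}, now kept uniform in $N$. Writing $\lambda(s):=\log(\sqrt N/2\pi)+\psi(s)$, one has $G_N^{(j)}/G_N=\lambda^j+O_j\bigl((1+|\lambda|)^{j-1}s^{-1}\bigr)$. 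Since $N\ge1$, for $s\ge 2k/3$ we get $\lambda(s)\ge\log(s/2\pi)-O(1/s)\gg\log k$. So $\lambda$ is real, positive, and large in that range, uniformly in $N$.

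The $L$-part is controlled by Deligne's bound $|a_f(n)|\le d(n)n^{(k-1)/2}$. Put $\sigma=s-(k-1)/2$. For $\sigma\ge2$ this gives
\[
 |L(f,s)-1|\ll2^{-\sigma},\qquad |L^{(l)}(f,s)|\ll_l2^{-\sigma}\quad(l\ge1),
\]
uniformly in $N$, $\chi$ and $f$.

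\textbf{Claims \eqref{eq:general-c0-nonzero} and \eqref{eq:general-c0-phase}.} At $s=k-1$ we have $\sigma=(k-1)/2$. Hence
\[
 c_0=A_{f,\mu}(k-1)=\lambda(k-1)^\mu\bigl(1+O_\mu(1/k)+O_\mu(2^{-k/2})\bigr).
\]
The $O(1/k)$ corrections are real, because they come from real polygamma values. Every nonreal contribution carries a factor $2^{-k/2}$, relative to the main term $\lambda^\mu$. The key point is that each Leibniz term $\lambda^{\mu-l}L^{(l)}$ with $l\ge1$ is at most $\lambda^\mu 2^{-k/2}$ once $\lambda\gg1$. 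Therefore $c_0$ is nonzero, lies in the right half-plane, and satisfies $\im c_0/\re c_0\ll 2^{-k/2}\le2^{-k/3}$.

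\textbf{Claim \eqref{eq:general-uniform-edge-approximation}: splitting the sum.} For $|w|\le1$ I would write
\[
 \frac{q}{c_0}-e^{a_Nw}=\sum_{0\le r<d/2}\Bigl(\frac{A_{f,\mu}(k-1-r)}{A_{f,\mu}(k-1)}-1\Bigr)\frac{a_N^rw^r}{r!}\;-\;\sum_{r\ge d/2}\frac{a_N^rw^r}{r!},
\]
with the obvious adjustment for the halved middle coefficient. The tail is at most $(2\pi)^{d/2}a_N/\lfloor d/2\rfloor!\ll a_N/k$. I then split the main sum at $r=d/3$.

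For $r\le d/3$ we have $s=k-1-r\ge 2k/3$. The expansion above together with $\psi'(s)\ll1/s$ gives
\[
 \frac{A_{f,\mu}(k-1-r)}{A_{f,\mu}(k-1)}-1\ll_\mu\frac{r}{k}+2^{-k/6}.
\]
Summing against $a_N^r/r!$, using $a_N\le2\pi$, and noting that the $r=0$ term vanishes, the range contributes $\ll_\mu a_N/k$. The stray $2^{-k/6}$ only appears for $r\ge1$, so it carries a factor $a_N$ and is harmless.

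For $d/3<r<d/2$ the weights are tiny, namely $a_N^r/r!\le a_N\,(2\pi)^{r-1}N^{-(r-1)/2}/r!$. It therefore suffices to have a bound $|A_{f,\mu}(s)|\ll_\mu(Nk)^{C_\mu}$ for $k/2\le s\le 2k/3$. When $\sigma\ge3/2$ this follows from Deligne, since $\sum d(n)(\log n)^l n^{-\sigma}$ is bounded, together with $|G_N^{(j)}/G_N|\ll(\log Nk)^j$. The only remaining values are $s=k/2$ and $s=(k+1)/2$, which are exactly the middle coefficients; these are covered by Lemma~\ref{lem:general-central-polynomial-bound}. Dividing by $|c_0|\gg1$, the factors $N^{-(r-1)/2}/r!$ with $r>k/6$ absorb $(Nk)^{C_\mu}$ for large $k$. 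So this range also contributes $\ll a_N/k$.

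\textbf{Main obstacle.} The delicate step is uniformity in $N$ in the ratio estimate and in the phase of $c_0$. This requires that $\lambda(s)$ stay bounded below by $\gg\log k$ for every level, so that the $L$-corrections are relatively small with no loss in $\log N$. It also requires that the polynomial-in-$N$ loss from the central lemma be beaten by the factor $a_N^{r}$ with $r\asymp k$; this is why the cut is placed at $r=d/3$.
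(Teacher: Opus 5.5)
Your proof is correct and is essentially the paper's argument. The shared ingredients are the Leibniz/Bell expansion with $\lambda_N(s)\gg\log k$ uniformly in $N$, Deligne's bound at the right edge to get $c_0=g_\mu(k-1)+O(\lambda_0^\mu 2^{-k/2})$ together with its phase, and the central polynomial bound combined with $a_N^r=a_N(2\pi)^{r-1}N^{-(r-1)/2}$ to dispose of the middle coefficient. The only difference is where you split the sum: you carry the ratio asymptotic up to $r\le d/3$ and use the conductor-decay trick on all of $(d/3,d/2]$. The paper instead uses the ratio asymptotic only for $r\le 3\log k$, and a uniform bound $|A_{f,\mu}(k-1-r)/c_0|\ll_\mu 1$ for $r\le J-1$, so that the superpolynomial decay of $\sum_{r>3\log k}a_N^r/r!$ handles the rest.
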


\begin{proof}
Write $\lambda_f(n):=a_f(n)n^{-(k-1)/2}$, so that
$|\lambda_f(n)|\le d(n)$.  Put
\[
 g_j(s):=\frac{G_N^{(j)}(s)}{G_N(s)},
 \qquad
 \lambda_N(s):=\psi(s)-\log a_N,
 \qquad
 \lambda_0:=\lambda_N(k-1).
\]
For sufficiently large $k$, uniformly in $N\ge1$,
\begin{equation}
 \lambda_0\asymp 1+\log(k\sqrt N),
 \qquad
 \lambda_0\gg\log k.
 \label{eq:general-lambda0-size}
\end{equation}

\medskip
\noindent\emph{Step 1: Uniform bounds away from the middle.}
If $0\le r\le J-1$ and $s_r=k-1-r$, then
\[
 s_r-\frac{k-1}{2}\ge\frac32
\]
when $k$ is even, and the left side is at least $2$ when $k$ is odd.  Hence, for
$0\le b\le\mu$,
\begin{equation}
 |L^{(b)}(f,s_r)|
 \le\sum_{n\ge1}\frac{d(n)(\log n)^b}
 {n^{s_r-(k-1)/2}}
 \ll_b1.
 \label{eq:general-L-right-bound}
\end{equation}
The complete Bell-polynomial formula in
$\lambda_N(s),\psi'(s),\ldots,\psi^{(j-1)}(s)$ gives
\begin{align}
 |g_j(s_r)|&\ll_j(1+\lambda_0)^j,
 \label{eq:general-gamma-quotient-bound}\\
 g_j(s)&=\lambda_N(s)^j
 +O_j\!\left(\frac{(1+\lambda_N(s))^{j-2}}{s}\right)
 \quad(j\ge2),
 \label{eq:general-gamma-leading}
\end{align}
with $g_0=1$ and $g_1=\lambda_N$.  Since
\begin{equation}
 A_{f,\mu}(s)
 =\sum_{b=0}^{\mu}\binom\mu b
 g_{\mu-b}(s)L^{(b)}(f,s),
 \label{eq:general-A-expansion}
\end{equation}
we obtain
\begin{equation}
 |A_{f,\mu}(s_r)|\ll_\mu(1+\lambda_0)^\mu
 \qquad(0\le r\le J-1).
 \label{eq:general-A-right-bound}
\end{equation}

\medskip
\noindent\emph{Step 2: The endpoint coefficient and its phase.}
At $s=k-1$, absolute convergence and Deligne's bound give
\begin{equation}
 L(f,k-1)=1+O(2^{-k/3}),
 \qquad
 L^{(b)}(f,k-1)=O_b(2^{-k/3})\quad(b\ge1),
 \label{eq:general-L-endpoint}
\end{equation}
uniformly in the conductor.  Equations~\eqref{eq:general-gamma-leading} and
\eqref{eq:general-A-expansion} therefore imply
\begin{equation}
 c_0=A_{f,\mu}(k-1)
 =\lambda_0^\mu
 \left(1+O_\mu\!\left(
 \frac1{k(1+\lambda_0)^2}+2^{-k/3}\right)\right),
 \label{eq:general-c0-asymptotic}
\end{equation}
with the evident interpretation $1+O(2^{-k/3})$ for $\mu=0$.
More precisely,
\begin{equation}
 c_0=g_\mu(k-1)
 +O_\mu\bigl((1+\lambda_0)^\mu2^{-k/3}\bigr),
 \label{eq:general-c0-real-main}
\end{equation}
and $g_\mu(k-1)$ is positive for all sufficiently large $k$.  Moreover, the
error term in~\eqref{eq:general-c0-real-main} is
$o_\mu(g_\mu(k-1))$, uniformly in $N$, $\chi$, and $f$.  Thus $c_0$ lies in the
open right half-plane for all sufficiently large $k$, so its principal argument is
well defined and tends uniformly to zero.  This proves
\eqref{eq:general-c0-nonzero}, gives
$|c_0|\asymp_\mu(1+\lambda_0)^\mu$, and proves the phase estimate
\eqref{eq:general-c0-phase}.  Combining this with
\eqref{eq:general-A-right-bound} yields
\begin{equation}
 \left|\frac{A_{f,\mu}(k-1-r)}{c_0}\right|\ll_\mu1,
 \qquad0\le r\le J-1.
 \label{eq:general-ratio-global-bound}
\end{equation}

\medskip
\noindent\emph{Step 3: Near-edge coefficient ratios.}
Let $R=\lfloor3\log k\rfloor$.  For $0\le r\le R$, the normalized
Dirichlet-series exponent satisfies
\[
 s_r-\frac{k-1}{2}\ge\frac k3
\]
for all sufficiently large $k$.  Hence
\begin{equation}
 L(f,s_r)=1+O(2^{-k/3}),
 \qquad
 L^{(b)}(f,s_r)=O_b(2^{-k/3})\quad(b\ge1).
 \label{eq:general-L-near-edge}
\end{equation}
Taylor's formula and the standard polygamma bounds give
\begin{equation}
 \lambda_N(s_r)
 =\lambda_0-r\psi'(k-1)+O(r^2/k^2).
 \label{eq:general-lambda-near-edge}
\end{equation}
Using~\eqref{eq:general-A-expansion},
\eqref{eq:general-gamma-leading}, and
\eqref{eq:general-L-near-edge}, and noting that
$\lambda_N(s_r)\asymp\lambda_0$ in this range, we first obtain the factorized
estimate
\begin{equation}
 \frac{A_{f,\mu}(k-1-r)}{c_0}
 =\left(\frac{\lambda_N(k-1-r)}{\lambda_0}\right)^\mu
 \left(1+O_\mu\!\left(
 \frac1{k(1+\lambda_0)^2}+2^{-k/3}\right)\right),
 \label{eq:general-ratio-edge-factorized}
\end{equation}
where the first factor is interpreted as $1$ when $\mu=0$.
Now~\eqref{eq:general-lambda-near-edge} and
$\psi'(k-1)\asymp k^{-1}$ give
\begin{equation}
 \frac{A_{f,\mu}(k-1-r)}{c_0}
 =1+O_\mu\!\left(
 \frac{r}{k(1+\lambda_0)}
 +\frac1{k(1+\lambda_0)^2}
 +2^{-k/3}\right)
 =1+O_\mu(k^{-1})
 \label{eq:general-ratio-edge-asymptotic}
\end{equation}
uniformly for $0\le r\le R$.  At $r=0$ the ratio is, of course, exactly one.

\medskip
\noindent\emph{Step 4: The last left-half coefficient.}
The index $J$ corresponds to
\[
 s_J=k-1-J\in\left\{\frac k2,\frac{k+1}{2}\right\}.
\]
Lemma~\ref{lem:general-central-polynomial-bound} and
\eqref{eq:general-c0-asymptotic} give, for some $C_\mu$,
\[
 \left|\frac{c_J}{c_0}\right|
 \ll_\mu (Nk)^{C_\mu}\frac{a_N^J}{J!}.
\]
Once $k$ is large enough that $J\ge2C_\mu+1$, the identity
$a_N=2\pi N^{-1/2}$ gives
\[
 N^{C_\mu}a_N^J
 =(2\pi)^JN^{C_\mu-J/2}
 \le a_N(2\pi)^{J-1}.
\]
Stirling's formula therefore gives, for every fixed $B>0$,
\begin{equation}
 \left|\frac{c_J}{c_0}\right|
 \ll_{\mu,B}a_Nk^{-B}.
 \label{eq:general-middle-negligible}
\end{equation}
This covers both the half-weighted middle term when $d$ is even and the final
left-half term when $d$ is odd.

\medskip
\noindent\emph{Step 5: Completion of the edge approximation.}
For $|w|\le1$, split the coefficient sum at $R$.  The term $r=0$ has no error;
therefore~\eqref{eq:general-ratio-edge-asymptotic} and $0<a_N\le2\pi$ give
\[
 \sum_{r=1}^{R}
 \left|\frac{A_{f,\mu}(k-1-r)}{c_0}-1\right|
 \frac{a_N^r}{r!}
 \ll_\mu\frac{a_N}{k}.
\]
By~\eqref{eq:general-ratio-global-bound}, the remaining coefficient tail is
bounded by a constant times $\sum_{r>R}a_N^r/r!$.  Uniformly for
$0<a_N\le2\pi$, this is $O_B(a_Nk^{-B})$ for every fixed $B$.  The same estimate
holds for the tail of $e^{a_Nw}$, and~\eqref{eq:general-middle-negligible} handles
the last coefficient.  This proves~\eqref{eq:general-uniform-edge-approximation}.
\end{proof}

The exact self-inversive phase differs slightly from the root-number phase in
Theorem~\ref{thm:intro-quantitative}.  Write
\begin{equation}
 c_0=|c_0|e^{i\beta_{f,\mu}},
 \qquad
 \gamma_{f,\mu}:=\Gamma_{f,\mu}-\beta_{f,\mu}.
 \label{eq:general-exact-phase-shift}
\end{equation}
Then~\eqref{eq:general-c0-phase} gives
\begin{equation}
 |\beta_{f,\mu}|\ll_\mu2^{-k/3},
 \label{eq:general-beta-small}
\end{equation}
and
\begin{equation}
 e^{2i\gamma_{f,\mu}}
 =\eta_{f,\mu}\frac{\overline{c_0}}{c_0}.
 \label{eq:general-exact-self-inversive-phase}
\end{equation}
Define the exact phase-corrected model
\begin{equation}
 \widetilde\Phi_{N,f,\mu}(\theta)
 :=a_N\sin\theta-D\theta-\gamma_{f,\mu},
 \label{eq:general-corrected-phase}
\end{equation}
and let $\widetilde{\mathcal A}_{N,f,\mu}$ be its zero set
$\cos\widetilde\Phi_{N,f,\mu}=0$ modulo $2\pi$.

\begin{theorem}[Quantitative localization and spacing]
\label{thm:quantitative-localization}
Fix $\mu\ge0$.  There exists $k_0(\mu)$ such that, for every normalized primitive
newform $f\in S_k^{\mathrm{new}}(\Gamma_0(N),\chi)$ with $k\ge k_0(\mu)$, the
zeros $e^{i\theta_j}$ of $\mathcal R_{f,\mu,k,N}$ and the corrected model angles
$\widetilde\alpha_j\in\widetilde{\mathcal A}_{N,f,\mu}$ can be cyclically labeled
so that
\begin{equation}
 \max_j
 \operatorname{dist}_{\R/2\pi\mathbb Z}
 (\theta_j,\widetilde\alpha_j)
 \ll_\mu\frac{a_N}{k^2}.
 \label{eq:general-root-localization-corrected}
\end{equation}
If $\alpha_j\in\mathcal A_{N,f,\mu}$ are the root-number model angles from
\eqref{eq:intro-general-model-set}, then the same labeling satisfies
\begin{equation}
 \max_j
 \operatorname{dist}_{\R/2\pi\mathbb Z}
 (\theta_j,\alpha_j)
 \ll_\mu\frac{a_N}{k^2}+\frac{2^{-k/3}}{k}.
 \label{eq:general-root-localization}
\end{equation}
Consequently, in cyclic order,
\begin{equation}
 \theta_{j+1}-\theta_j
 =\frac{2\pi}{k-2}
 +O_\mu\!\left(\frac{a_N}{k^2}\right),
 \label{eq:general-root-spacing}
\end{equation}
where the final gap is interpreted modulo $2\pi$.  The zeros of
$Q_{f,\mu}^{\mathrm{norm}}$ are obtained by rotating these angles by $-\pi/2$.
\end{theorem}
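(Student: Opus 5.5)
We work on the unit circle with the edge decomposition~\eqref{eq:general-edge-decomposition}. For $w=e^{i\theta}$, \eqref{eq:general-edge-decomposition} gives
\[
 \mathcal R_{f,\mu,k,N}(e^{i\theta})
 =e^{iD\theta}\bigl(X(\theta)+\eta_{f,\mu}\overline{X(\theta)}\bigr),
 \qquad X(\theta):=e^{-iD\theta}q_{f,\mu,k,N}(e^{i\theta}).
\]
Factor $\eta_{f,\mu}=e^{2i\Gamma_{f,\mu}}$ and $c_0=|c_0|e^{i\beta_{f,\mu}}$. By \eqref{eq:general-exact-self-inversive-phase}, the bracket then equals $2e^{i\gamma_{f,\mu}}|c_0|\re\bigl(e^{-i\gamma_{f,\mu}}e^{i\beta_{f,\mu}}X/c_0\bigr)$. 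Write
\[
 q/c_0=e^{a_Nw}(1+\varepsilon(w)),
\]
with $|\varepsilon|\ll_\mu a_N/k$ on $|w|\le1$; this is available by Lemma~\ref{lem:general-uniform-edge-approximation}, since $|e^{a_Nw}|\ge e^{-2\pi}$ there. Then up to a nonvanishing factor the zeros of $\mathcal R$ on $\T$ are the zeros of
\[
 g(\theta):=e^{a_N\cos\theta}\,\re\bigl(e^{i\widetilde\Phi(\theta)}(1+\varepsilon(e^{i\theta}))\bigr),
\]
where $\widetilde\Phi=\widetilde\Phi_{N,f,\mu}$. One checks that the phase combination $\beta_{f,\mu}-\gamma_{f,\mu}$ reconciles exactly with $-\gamma_{f,\mu}$ in \eqref{eq:general-corrected-phase}; this bookkeeping is the first thing I would verify. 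Writing $1+\varepsilon=\rho e^{i\epsilon_1}$ with $|\epsilon_1|\ll a_N/k$, the zeros are exactly the solutions of $\cos(\widetilde\Phi(\theta)+\epsilon_1(\theta))=0$.

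Next, the model phase is strictly monotone: $\widetilde\Phi'(\theta)=a_N\cos\theta-D\le-(D-2\pi)$, which is $\asymp -k$ once $k\ge k_0$. Hence $\widetilde\Phi$ decreases by exactly $2\pi D=\pi d$ over one turn, so $\cos\widetilde\Phi=0$ has exactly $d$ solutions $\widetilde\alpha_j$, at which $|\sin\widetilde\Phi|=1$. Near each $\widetilde\alpha_j$, the perturbed phase $\widetilde\Phi+\epsilon_1$ crosses the level $\pi/2+n\pi$ within angular distance $\ll|\epsilon_1|/|\widetilde\Phi'|\ll (a_N/k)/k=a_N/k^2$. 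An intermediate value argument gives a root in each such window. Since Theorem~\ref{thm:global-newform-interlacing}, transported through \eqref{eq:general-Q-R-rotation}, says $\mathcal R$ has exactly $d$ simple zeros, all on $\T$, and the windows are disjoint, there is exactly one root per window, and the cyclic labeling is forced. This proves \eqref{eq:general-root-localization-corrected}.

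For \eqref{eq:general-root-localization}, note that $\Phi$ and $\widetilde\Phi$ differ by the constant $\gamma_{f,\mu}-\Gamma_{f,\mu}=-\beta_{f,\mu}$, which is $O(2^{-k/3})$ by \eqref{eq:general-beta-small}. Dividing by $|\Phi'|\gg k$ moves each model angle by $\ll 2^{-k/3}/k$. For the spacing \eqref{eq:general-root-spacing}, consecutive model angles satisfy $\widetilde\Phi(\widetilde\alpha_{j})-\widetilde\Phi(\widetilde\alpha_{j+1})=\pi$. By the mean value theorem with $\widetilde\Phi'=-D+O(a_N)$, their gap is $\pi/(D+O(a_N))=2\pi/d+O(a_N/k^2)$, and the localization errors are of the same order.

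The main obstacle is controlling the argument perturbation $\epsilon_1$ uniformly. One needs $|\varepsilon|<1/2$ so that the argument of $1+\varepsilon$ is defined and of size $O(|\varepsilon|)$, uniformly in $N$; this holds for large $k$ since $a_N\le2\pi$. The harder point is ensuring that the reduction to $\cos(\widetilde\Phi+\epsilon_1)=0$ is legitimate, i.e.\ that the prefactor $\rho e^{a_N\cos\theta}$ never vanishes. If $\epsilon_1$ turned out to be insufficiently regular for the crossing argument, I would instead use sign changes of $g$ at $\widetilde\alpha_j\pm Ca_N/k^2$. These follow because $\cos(\widetilde\Phi\pm c\,a_N/k)$ has size $\gg a_N/k$ with opposite signs there, which dominates the perturbation. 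Counting the resulting $d$ sign changes against the $d$ total zeros then finishes the argument.
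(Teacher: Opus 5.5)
Your proposal is correct and follows essentially the paper's route: the same reduction through \eqref{eq:general-edge-decomposition} and \eqref{eq:general-exact-self-inversive-phase} to the real function $e^{a_N\cos\theta}\re\bigl(e^{i\widetilde\Phi_{N,f,\mu}(\theta)}(1+\varepsilon)\bigr)$, the same monotone phase with $|\widetilde\Phi_{N,f,\mu}'|\ge D-a_N$, and a sign-change plus zero count. The paper differs only in bookkeeping: it uses coarse intervals $(b_j,b_{j+1})$, localizes afterwards via $|\cos\widetilde\Phi_{N,f,\mu}(\theta_j)|\le e^{a_N}\delta_{f,\mu,k,N}$, and counts zeros with $c_d=\eta_{f,\mu}\overline{c_0}\ne0$ rather than citing Theorem~\ref{thm:global-newform-interlacing}.

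One fix is needed in your displayed identity. The bracket equals $2e^{i(\gamma_{f,\mu}+\beta_{f,\mu})}|c_0|\re\bigl(e^{-i\gamma_{f,\mu}}X/c_0\bigr)$, with no $e^{i\beta_{f,\mu}}$ inside the real part, and this gives your $g$ directly. Your displayed version would instead leave a phase error $\beta_{f,\mu}=O(2^{-k/3})$, which is not absorbed into $a_N/k^2$ uniformly in $N$.
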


\begin{proof}
Put $p(w):=q_{f,\mu,k,N}(w)/c_0$ and
\[
 \delta_{f,\mu,k,N}
 :=\sup_{|w|\le1}|p(w)-e^{a_Nw}|.
\]
Lemma~\ref{lem:general-uniform-edge-approximation} gives
\begin{equation}
 \delta_{f,\mu,k,N}\ll_\mu\frac{a_N}{k}.
 \label{eq:general-delta-bound}
\end{equation}
For $w=e^{i\theta}$,~\eqref{eq:general-edge-decomposition} and
\eqref{eq:general-exact-self-inversive-phase} yield
\begin{align}
 e^{-iD\theta}\frac{\mathcal R_{f,\mu,k,N}(e^{i\theta})}{c_0}
 &=Y(\theta)+e^{2i\gamma_{f,\mu}}\overline{Y(\theta)},
 \label{eq:general-real-trig-decomposition}\\
 Y(\theta)&:=e^{-iD\theta}p(e^{i\theta}).
 \notag
\end{align}
Thus, on the universal cover of the circle,
\begin{equation}
 H(\theta):=\frac12e^{-i\gamma_{f,\mu}}
 e^{-iD\theta}
 \frac{\mathcal R_{f,\mu,k,N}(e^{i\theta})}{c_0}
 =\re\bigl(e^{-i\gamma_{f,\mu}}Y(\theta)\bigr)
 \label{eq:general-real-function}
\end{equation}
is real.  When $d$ is odd it is antiperiodic rather than periodic, but its zero set
is $2\pi$-periodic.  Its model is
\begin{equation}
 H_0(\theta)
 =e^{a_N\cos\theta}
  \cos\widetilde\Phi_{N,f,\mu}(\theta),
 \qquad
 \|H-H_0\|_\infty\le\delta_{f,\mu,k,N}.
 \label{eq:general-model-real-function}
\end{equation}

For large $k$ one has $D>a_N$, and
\[
 \widetilde\Phi_{N,f,\mu}'(\theta)
 =a_N\cos\theta-D<0,
 \qquad
 \widetilde\Phi_{N,f,\mu}(\theta+2\pi)
 =\widetilde\Phi_{N,f,\mu}(\theta)-d\pi.
\]
Choose $b_0\in\R$ with
$\widetilde\Phi_{N,f,\mu}(b_0)\in\pi\mathbb Z$, and let $b_j$ be determined by
\[
 \widetilde\Phi_{N,f,\mu}(b_j)
 =\widetilde\Phi_{N,f,\mu}(b_0)-j\pi,
 \qquad0\le j\le d.
\]
Then $b_d=b_0+2\pi$.  At the points $b_j$, the model in
\eqref{eq:general-model-real-function} has alternating signs and absolute value at
least $e^{-a_N}\ge e^{-2\pi}$.  By~\eqref{eq:general-delta-bound}, the exact
function has the same signs for large $k$.  Hence it has a zero in each interval
$(b_j,b_{j+1})$.  These sign changes give $d$ distinct zeros of
$\mathcal R_{f,\mu,k,N}$ on the unit circle.  Since
$c_d=\eta_{f,\mu}\overline{c_0}\ne0$, the polynomial has degree $d$; hence these
zeros exhaust all zeros of $\mathcal R_{f,\mu,k,N}$, counted with multiplicity.  In
particular every interval contains exactly one zero, and all zeros are simple.

The same interval contains exactly one corrected model angle
$\widetilde\alpha_j$.  At the corresponding exact zero $\theta_j$,
\eqref{eq:general-model-real-function} gives
\[
 |\cos\widetilde\Phi_{N,f,\mu}(\theta_j)|
 \le e^{a_N}\delta_{f,\mu,k,N}.
\]
Since
$|\widetilde\Phi_{N,f,\mu}'|\ge D-a_N$, it follows that
\begin{equation}
 \operatorname{dist}_{\R/2\pi\mathbb Z}
 (\theta_j,\widetilde\alpha_j)
 \le
 \frac{\arcsin(e^{a_N}\delta_{f,\mu,k,N})}{D-a_N}
 \ll_\mu\frac{a_N}{k^2},
 \label{eq:general-localization-explicit}
\end{equation}
which proves~\eqref{eq:general-root-localization-corrected}.

The corrected and root-number phases differ by the constant
$\beta_{f,\mu}$.  Applying the mean-value theorem and
\eqref{eq:general-beta-small} gives
\[
 \operatorname{dist}_{\R/2\pi\mathbb Z}
 (\widetilde\alpha_j,\alpha_j)
 \le\frac{|\beta_{f,\mu}|}{D-a_N}
 \ll_\mu\frac{2^{-k/3}}{k}.
\]
This proves~\eqref{eq:general-root-localization}.

Finally, adjacent corrected model angles differ in phase by $\pi$.  For a suitable
intermediate point $\xi_j$,
\begin{equation}
 \widetilde\alpha_{j+1}-\widetilde\alpha_j
 =\frac{\pi}{D-a_N\cos\xi_j}
 =\frac{2\pi}{d}+O\!\left(\frac{a_N}{k^2}\right),
 \label{eq:general-model-spacing}
\end{equation}
cyclically at the final gap.  Combining this with the corrected localization
\eqref{eq:general-root-localization-corrected} proves
\eqref{eq:general-root-spacing}.
Equation~\eqref{eq:general-Q-R-rotation} gives the final rotation statement.
\end{proof}

\begin{corollary}[Quantitative half-step separation]
\label{cor:quantitative-half-step}
Fix $\mu\ge0$.  For all sufficiently large $k$, uniformly over all normalized
primitive newforms $f\in S_k^{\mathrm{new}}(\Gamma_0(N),\chi)$, merge the roots of
$Q_{f,\mu}^{\mathrm{norm}}$ and $Q_{f,\mu+1}^{\mathrm{norm}}$ in cyclic order.
The two derivative orders alternate, and every gap between adjacent roots in the
merged set is
\begin{equation}
 \frac{\pi}{k-2}
 +O_\mu\!\left(\frac{a_N}{k^2}+\frac{2^{-k/3}}{k}\right)
 =\frac{\pi}{k-2}+O_\mu(k^{-2}).
 \label{eq:general-quantitative-half-step}
\end{equation}
The same angular statement holds for the conductor-rescaled polynomials
$R_{f,\mu}^{(N)}$ on $|X|=N^{-1/2}$.
\end{corollary}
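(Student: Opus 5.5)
We apply Theorem~\ref{thm:quantitative-localization} to the two consecutive orders $\mu$ and $\mu+1$ and compare their model phases. Both orders are handled for $k\ge\max(k_0(\mu),k_0(\mu+1))$, with constants depending only on $\mu$. Write $\theta_j^{(\mu)}$ and $\theta_j^{(\mu+1)}$ for the root angles of $\mathcal R_{f,\mu,k,N}$ and $\mathcal R_{f,\mu+1,k,N}$. By \eqref{eq:general-root-localization}, each lies within $O_\mu(a_N/k^2+2^{-k/3}/k)$ of the root-number model angles $\alpha_j^{(\mu)}$ and $\alpha_j^{(\mu+1)}$. These are defined by $\cos\Phi_{N,f,\mu}=0$ and $\cos\Phi_{N,f,\mu+1}=0$.

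The key observation is that $\Gamma_{f,\mu+1}=\Gamma_{f,\mu}+\pi/2$. Hence
\[
 \Phi_{N,f,\mu+1}=\Phi_{N,f,\mu}-\frac\pi2,
\]
and the model set for order $\mu+1$ is exactly the set where $\sin\Phi_{N,f,\mu}=0$. Since $\Phi_{N,f,\mu}$ is strictly decreasing with derivative $a_N\cos\theta-D$, the merged model set is the preimage of $\frac\pi2\mathbb Z$ (shifted by the constant) under the single monotone phase $\Phi_{N,f,\mu}$. The two orders therefore alternate exactly in the model. Adjacent merged model angles differ in phase by $\pi/2$, so the mean value theorem gives a gap
\[
 \frac{\pi/2}{D-a_N\cos\xi}=\frac{\pi}{d}+O\!\left(\frac{a_N}{k^2}\right),
\]
as in \eqref{eq:general-model-spacing}.

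Next, we transfer this to the actual roots. Each actual root moves from its model angle by at most $\epsilon:=O_\mu(a_N/k^2+2^{-k/3}/k)$. The merged model gaps are of size $\asymp 1/k$, and $\epsilon=o(1/k)$ uniformly in $N$, since $a_N\le2\pi$. So for large $k$ the perturbation cannot change the cyclic order. The roots of the two orders still alternate, and each merged gap equals its model gap plus $O(\epsilon)$. This yields \eqref{eq:general-quantitative-half-step}; the last equality holds because $a_N\le 2\pi$ and $2^{-k/3}/k=O(k^{-2})$. Alternation is also guaranteed independently by Theorem~\ref{thm:global-newform-interlacing}, which gives a consistency check on the labeling.

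The remaining steps are bookkeeping. The rotation by $-\pi/2$ in \eqref{eq:general-Q-R-rotation} is the same for both orders, so the statements for $Q^{\mathrm{norm}}$ follow. The rescaling $X\mapsto\sqrt N X$ in \eqref{eq:Rfm}, together with the reflection from \eqref{eq:reciprocal-rotation}, preserves angular gaps and alternation. The main point needing care is the exact phase identity above: the root-number phases of the two orders must differ by exactly $\pi/2$, without the corrected shifts $\beta_{f,\mu}$. This is why I use the uncorrected models $\alpha_j$ and absorb the $2^{-k/3}/k$ error. Using the corrected models instead would require controlling $\beta_{f,\mu+1}-\beta_{f,\mu}$, which \eqref{eq:general-beta-small} also bounds, so either route works.
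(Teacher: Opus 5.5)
Your proposal is correct and follows essentially the same route as the paper. The identity $\Gamma_{f,\mu+1}=\Gamma_{f,\mu}+\pi/2$ makes the merged root-number model set the level set $\Phi_{N,f,\mu}\in\frac\pi2\mathbb Z$, whose mean-value gaps are $\pi/d+O(a_N/k^2)$. Applying Theorem~\ref{thm:quantitative-localization} to both orders gives a localization radius $o(k^{-1})$, which preserves both the merged cyclic order and the gap sizes. One cosmetic slip: the passage from $Q_{f,\mu}^{\mathrm{norm}}$ to $R_{f,\mu}^{(N)}$ is a pure radial rescaling, so the reciprocal reflection in \eqref{eq:reciprocal-rotation} plays no role there.
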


\begin{proof}
By definition,
\[
 \Gamma_{f,\mu+1}=\Gamma_{f,\mu}+\frac\pi2.
\]
Hence the union of the two root-number model sets consists of consecutive phase
levels separated by $\pi/2$.  The mean-value theorem gives every adjacent model gap
as
\[
 \frac{\pi/2}{D-a_N\cos\xi}
 =\frac{\pi}{d}+O\!\left(\frac{a_N}{k^2}\right).
\]
The minimum gap in the union of the two model sets is $\gg k^{-1}$, whereas
\eqref{eq:general-root-localization}, applied to orders $\mu$ and $\mu+1$, gives a
localization radius $o(k^{-1})$.  Hence, for all sufficiently large $k$, that
radius is less than one quarter of the minimum model gap.  Nearest-model matching
is therefore unique and preserves the merged cyclic order, so the two derivative
orders alternate.  The same localization estimate then gives
\eqref{eq:general-quantitative-half-step}.
The reciprocal rotation and radial conductor rescaling do not change angular gaps.
\end{proof}

\begin{remark}
For $\mu=0$, the phase in~\eqref{eq:intro-general-phase}, after the reciprocal
rotation in~\eqref{eq:general-Q-R-rotation}, is equivalent to the classical
conductor-dependent model used in~\cite{JinMaOnoSound}.  In the arbitrary-nebentypus case, the result of Liu--Park--Song \cite{LiuParkSong} allows finitely many possible exceptional newforms.  Those papers obtain sharper localization in the
classical setting.  The estimates here are organized instead to be uniform in the
level and nebentypus for every fixed derivative order and to retain enough phase
information to compare consecutive orders.
\end{remark}

\section*{Use of AI-assisted tools}

For transparency, during the preparation of this manuscript the author used OpenAI
ChatGPT to discuss possible gaps in arguments, improve exposition, and edit LaTeX and
English prose.  The tool was not used as a source of mathematical results or as a formal proof
verifier.  All proofs, computations, statements, and references were independently
checked by the author, who takes full responsibility for the manuscript.

\begingroup
\setlength{\emergencystretch}{3em}

\endgroup


\begin{thebibliography}{99}

\bibitem{BabeiRolenWagner}
A.~Babei, L.~Rolen, and I.~Wagner,
\emph{The Riemann hypothesis for period polynomials of Hilbert modular forms},
J. Number Theory \textbf{218} (2021), 44--61.

\bibitem{BorceaBranden}
J.~Borcea and P.~Br\"and\'en,
\emph{P\'olya--Schur master theorems for circular domains and their boundaries},
Ann. of Math. (2) \textbf{170} (2009), no.~1, 465--492.

\bibitem{BrandenChasse}
P.~Br\"and\'en and M.~Chasse,
\emph{Classification theorems for operators preserving zeros in a strip},
J. Anal. Math. \textbf{132} (2017), no.~1, 177--215.

\bibitem{BrelandEtAl}
L.~Breland, K.~H. Le, J.~Ni, L.~O'Brien, H.~Xue, and D.~Zhu,
\emph{Interlacing of zeros of period polynomials},
J. Math. Soc. Japan \textbf{77} (2025), no.~1, 255--299.

\bibitem{ConreyFarmerImamoglu}
J.~B. Conrey, D.~W. Farmer, and \"O.~\.{I}mamo\u{g}lu,
\emph{The nontrivial zeros of period polynomials of modular forms lie on the unit circle},
Int. Math. Res. Not. IMRN (2013), no.~20, 4758--4771.

\bibitem{CravenCsordas}
T.~Craven and G.~Csordas,
\emph{Composition theorems, multiplier sequences and complex zero decreasing sequences},
in \emph{Value Distribution Theory and Related Topics}, Adv. Complex Anal. Appl.,
vol.~3, Kluwer Academic Publishers, Boston, MA, 2004, 131--166.

\bibitem{deBruijnTrig}
N.~G. de Bruijn,
\emph{The roots of trigonometric integrals},
Duke Math. J. \textbf{17} (1950), no.~3, 197--226.

\bibitem{Deligne}
P.~Deligne,
\emph{La conjecture de Weil. I},
Inst. Hautes \'Etudes Sci. Publ. Math. No.~\textbf{43} (1974), 273--307.

\bibitem{DiamantisRolen}
N.~Diamantis and L.~Rolen,
\emph{Eichler cohomology and zeros of polynomials associated to derivatives of $L$-functions},
J. Reine Angew. Math. \textbf{770} (2021), 1--25.

\bibitem{DiamantisRolenSurvey}
N.~Diamantis and L.~Rolen,
\emph{Period polynomials, derivatives of $L$-functions, and zeros of polynomials},
Res. Math. Sci. \textbf{5} (2018), no.~1, Paper No.~9, 15~pp.

\bibitem{ElGuindyRaji}
A.~El-Guindy and W.~Raji,
\emph{Unimodularity of zeros of period polynomials of Hecke eigenforms},
Bull. Lond. Math. Soc. \textbf{46} (2014), no.~3, 528--536.

\bibitem{Grace}
J.~H. Grace,
\emph{The zeros of a polynomial},
Proc. Cambridge Philos. Soc. \textbf{11} (1902), 352--357.

\bibitem{ImKim}
B.-H. Im and H.~Kim,
\emph{Riemann hypothesis for period polynomials attached to the derivatives of $L$-functions of cusp forms for $\Gamma_0(N)$},
J. Math. Anal. Appl. \textbf{509} (2022), no.~2, Paper No.~125971.

\bibitem{IwaniecKowalski}
H.~Iwaniec and E.~Kowalski,
\emph{Analytic Number Theory},
American Mathematical Society Colloquium Publications, vol.~53,
American Mathematical Society, Providence, RI, 2004.

\bibitem{JinMaOnoSound}
S.~Jin, W.~Ma, K.~Ono, and K.~Soundararajan,
\emph{The Riemann hypothesis for period polynomials of modular forms},
Proc. Natl. Acad. Sci. USA \textbf{113} (2016), no.~10, 2603--2608.

\bibitem{KatkovaTyaglovVishnyakova}
O.~Katkova, M.~Tyaglov, and A.~Vishnyakova,
\emph{Hermite--Poulain theorems for linear finite difference operators},
Constr. Approx. \textbf{52} (2020), no.~3, 357--393.

\bibitem{KabluchkoMultiplicative}
Z.~Kabluchko,
\emph{Zero distribution of multiplicative Hermite and Laguerre polynomials},
arXiv:2511.01456 [math.PR] (2025), version~2 (2026), 25~pp.

\bibitem{Levin}
B.~Ya. Levin,
\emph{Distribution of Zeros of Entire Functions}, revised ed.,
Translations of Mathematical Monographs, vol.~5, American Mathematical Society,
Providence, RI, 1980.

\bibitem{LiuParkSong}
Y.~P. Liu, P.~S. Park, and Z.~Q. Song,
\emph{The ``Riemann hypothesis'' is true for period polynomials of almost all newforms},
Res. Math. Sci. \textbf{3} (2016), Paper No.~31, 18~pp.

\bibitem{MarcusSpielmanSrivastavaFiniteFree}
A.~W. Marcus, D.~A. Spielman, and N.~Srivastava,
\emph{Finite free convolutions of polynomials},
Probab. Theory Related Fields \textbf{182} (2022), no.~3--4, 807--848.

\bibitem{Miyake}
T.~Miyake,
\emph{Modular Forms},
Springer Monographs in Mathematics, Springer-Verlag, Berlin, 2006.

\bibitem{Obreschkoff}
N.~Obreschkoff,
\emph{Verteilung und Berechnung der Nullstellen reeller Polynome},
Hochschulb\"ucher f\"ur Mathematik, vol.~55, VEB Deutscher Verlag der
Wissenschaften, Berlin, 1963.

\bibitem{PolyaSchur}
G.~P\'olya and I.~Schur,
\emph{\"Uber zwei Arten von Faktorenfolgen in der Theorie der algebraischen Gleichungen},
J. Reine Angew. Math. \textbf{144} (1914), 89--113.

\bibitem{RahmanSchmeisser}
Q.~I. Rahman and G.~Schmeisser,
\emph{Analytic Theory of Polynomials: Critical Points, Zeros and Extremal Properties},
London Mathematical Society Monographs New Series, vol.~26, Oxford University Press, Oxford, 2002.

\bibitem{Szego}
G.~Szeg\H{o},
\emph{Bemerkungen zu einem Satz von J.~H. Grace \"uber die Wurzeln algebraischer Gleichungen},
Math. Z. \textbf{13} (1922), 28--55.

\end{thebibliography}
\end{document}